\newtheorem{theorem}{Theorem}
\theoremstyle{plain}
\newtheorem{conjecture}[theorem]{Conjecture}
\newtheorem{definition}[theorem]{Definition}
\newtheorem{lemma}[theorem]{Lemma}
\newtheorem{proposition}[theorem]{Proposition}
\newtheorem{remark}[theorem]{Remark}
\numberwithin{equation}{section}
\begin{document}
\title{Weighted Alpert wavelets}
\author{Rob Rahm}
\author{Eric T. Sawyer$^\dagger$}
\thanks{$\dagger $ Research supported in part by a grant from the National
Science and Engineering Research Council of Canada.}
\author{Brett D. Wick$^\ddagger$}
\thanks{$\ddagger $ Research supported in part by National Science
Foundation DMS grants \# 1560955 and 1800057.}
\date{\today }

\begin{abstract}
In this paper we construct a wavelet basis in $L^{2}(\mathbb{R}^{n};\mu )$
possessing vanishing moments of a fixed order for a general locally finite
positive Borel measure $\mu $. The approach is based on a clever
construction of Alpert in the case of Lebesgue measure that is appropriately
modified to handle the general measures considered here. We then use this
new wavelet basis to study a two-weight inequality for a general Calder\'{o}%
n-Zygmund operator on $\mathbb{R}$ and conjecture that under suitable
natural conditions, including a \emph{weaker} energy condition, the operator
is bounded from $L^{2}(\mathbb{R};\sigma )$ to $L^{2}(\mathbb{R};\omega )$
if certain \emph{stronger} testing conditions hold on polynomials. An
example is provided showing that this conjecture is logically different than
existing results in the literature.
\end{abstract}

\maketitle
\tableofcontents

\section{Introduction and statement of main results}

The use of weighted Haar wavelet expansions has its roots in connection with
the $Tb$ theorem in \cite{DaJoSe} and \cite{CoJoSe}, and came to fruition in
treating the two weight norm inequality for the Hilbert transform in \cite%
{NTV4}, \cite{Vol}, the two part paper \cite{LaSaShUr3},\cite{Lac} and \cite%
{Hyt2}. The key features of the weighted Haar expansion $\left\{ h_{I}^{\mu
}\right\} _{I\in \mathcal{D}}$ are threefold:

\begin{enumerate}
\item The Haar functions $\left\{ h_{I}^{\mu }\right\} _{I\in \mathcal{D}}$
form an orthonormal basis of $L^{2}\left( \mu \right) $: 
\begin{eqnarray*}
&&f=\sum_{I\in \mathcal{D}}\left\langle f,h_{I}^{\mu }\right\rangle
_{L^{2}\left( \mu \right) }h_{I}^{\mu }\text{ both pointwise }\mu \text{%
-a.e. and in }L^{2}\left( \mu \right) , \\
&&\text{where }\left\langle h_{J}^{\mu },h_{I}^{\mu }\right\rangle
_{L^{2}\left( \mu \right) }=\delta _{I}^{J},
\end{eqnarray*}

\item Telescoping identities hold: 
\begin{equation*}
\mathbf{1}_{K}\sum_{I\in \mathcal{D}:\ K\subsetneqq I\subset L}\left\langle
f,h_{I}^{\mu }\right\rangle _{L^{2}\left( \mu \right) }h_{I}^{\mu
}=E_{K}^{\mu }f-E_{L}^{\mu }f,\ \ \ \ \ K\subsetneqq L,
\end{equation*}

\item Moment vanishing conditions hold:%
\begin{equation*}
\int h_{I}^{\mu }\left( x\right) d\mu \left( x\right) =0,\ \ \ \ \ I\in 
\mathcal{D}.
\end{equation*}
\end{enumerate}

In the setting of Lebesgue measure, Alpert \cite{Alp} introduced new
wavelets with more vanishing moments in (3), while retaining orthonormality
(1) and telescoping (2). The expense of imposing these extra moment
conditions is that one requires additional functions in order to obtain the
expansion. The purpose of this note is to extend existence of Alpert
wavelets to arbitrary locally finite positive Borel measures in Euclidean
space $\mathbb{R}^{n}$, and to investigate degeneracy and uniqueness in the
one-dimensional case as well. To state the main result in this paper
requires some notation.

Let $\mu $ be a locally finite positive Borel measure on $\mathbb{R}^{n}$,
and fix $k\in \mathbb{N}$. For $Q\in \mathcal{P}^{n}$, the collection of
cubes with sides parallel to the coordinate axes, denote by $%
L_{Q;k}^{2}\left( \mu \right) $ the finite dimensional subspace of $%
L^{2}\left( \mu \right) $ that consists of linear combinations of the
indicators of\ the children $\mathfrak{C}\left( Q\right) $ of $Q$ multiplied
by polynomials of degree at most $k-1$, and such that the linear
combinations have vanishing $\mu $-moments on the cube $Q$ up to order $k-1$:%
\begin{equation*}
L_{Q;k}^{2}\left( \mu \right) \equiv \left\{ f=\dsum\limits_{Q^{\prime }\in 
\mathfrak{C}\left( Q\right) }\mathbf{1}_{Q^{\prime }}p_{Q^{\prime };k}\left(
x\right) :\int_{Q}f\left( x\right) x_{i}^{\ell }d\mu \left( x\right) =0,\ \
\ \text{for }0\leq \ell \leq k-1\text{ and }1\leq i\leq n\right\} ,
\end{equation*}%
where $p_{Q^{\prime };k}\left( x\right) =\sum_{\alpha \in \mathbb{Z}%
_{+}^{n}:\left\vert \alpha \right\vert \leq k-1\ }a_{Q^{\prime };\alpha
}x^{\alpha }$ is a polynomial in $\mathbb{R}^{n}$ of degree $\left\vert
\alpha \right\vert =\alpha _{1}+...+\alpha _{n}$ at most $k-1$. Here $%
x^{\alpha }=x_{1}^{\alpha _{1}}x_{2}^{\alpha _{2}}...x_{n}^{\alpha _{n}}$.
Let $d_{Q;k}\equiv \dim L_{Q;k}^{2}\left( \mu \right) $ be the dimension of
the finite dimensional linear space $L_{Q;k}^{2}\left( \mu \right) $.

Now define%
\begin{eqnarray*}
&&\mathcal{F}_{\infty }^{k}\left( \mu \right) \equiv \left\{ \alpha \in 
\mathbb{Z}_{+}^{n}:\left\vert \alpha \right\vert \leq k-1:x^{\alpha }\in
L^{2}\left( \mu \right) \right\} \ , \\
&&\ \ \ \ \ \ \ \ \ \ \ \ \ \ \ \text{and }\mathcal{P}_{\mathbb{R}%
^{n}}^{k}\left( \mu \right) \equiv \limfunc{Span}\left\{ x^{\alpha }\right\}
_{\alpha \in \mathcal{F}_{\infty }^{k}}\ .
\end{eqnarray*}%
Let $\bigtriangleup _{Q;k}^{\mu }$ denote orthogonal projection onto the
finite dimensional subspace $L_{Q;k}^{2}\left( \mu \right) $, let $\mathbb{E}%
_{Q;k}^{\mu }$ denote orthogonal projection onto the finite dimensional
subspace%
\begin{equation*}
\mathnormal{\limfunc{Span}}\{\mathbf{1}_{Q}\left( x\right) x^{\alpha }:0\leq
\left\vert \alpha \right\vert \leq k-1\},
\end{equation*}%
and let $\bigtriangleup _{\mathbb{R}^{n};k}^{\mu }$ denote orthogonal
projection onto $\mathcal{P}_{\mathbb{R}^{n}}^{k}\left( \mu \right) $. The
projections $\bigtriangleup _{Q;k}^{\mu }$ are often referred to as
multiresolution projections.

The first of two main results proved in this note is the following theorem,
which establishes the existence of Alpert wavelets in all dimensions having
the three important properties of orthogonality, telescoping and moment
vanishing.

\begin{theorem}[Weighted Alpert Bases]
\label{main1}Let $\mu $ be a locally finite positive Borel measure on $%
\mathbb{R}^{n}$, fix $k\in \mathbb{N}$, and fix a dyadic grid $\mathcal{D}$
in $\mathbb{R}^{n}$.

\begin{enumerate}
\item Then $\left\{ \bigtriangleup _{\mathbb{R}^{n};k}^{\mu }\right\} \cup
\left\{ \bigtriangleup _{Q;k}^{\mu }\right\} _{Q\in \mathcal{D}}$ is a
complete set of orthogonal projections in $L_{\mathbb{R}^{n}}^{2}\left( \mu
\right) $ and%
\begin{eqnarray*}
f &=&\bigtriangleup _{\mathbb{R}^{n};k}^{\mu }f+\sum_{Q\in \mathcal{D}%
}\bigtriangleup _{Q;k}^{\mu }f,\ \ \ \ \ f\in L_{\mathbb{R}^{n}}^{2}\left(
\mu \right) , \\
&&\left\langle \bigtriangleup _{\mathbb{R}^{n};k}^{\mu }f,\bigtriangleup
_{Q;k}^{\mu }f\right\rangle =\left\langle \bigtriangleup _{P;k}^{\mu
}f,\bigtriangleup _{Q;k}^{\mu }f\right\rangle =0\text{ for }P\neq Q,
\end{eqnarray*}%
where convergence in the first line holds both in $L_{\mathbb{R}%
^{n}}^{2}\left( \mu \right) $ norm and pointwise $\mu $-almost everywhere.

\item Moreover we have the telescoping identities%
\begin{equation}
\mathbf{1}_{Q}\sum_{Q\subsetneqq I\subset P}\bigtriangleup _{I;k}^{\mu }=%
\mathbb{E}_{Q;k}^{\mu }-\mathbb{E}_{P;k}^{\mu }\ \text{ \ for }P,Q\in 
\mathcal{D}\text{ with }P\subsetneqq Q,  \label{telescoping}
\end{equation}

\item and the moment conditions%
\begin{equation}
\int_{\mathbb{R}^{n}}\bigtriangleup _{Q;k}^{\mu }f\left( x\right) x^{\alpha
}d\mu \left( x\right) =0,\ \ \ \text{for }Q\in \mathcal{D},\text{ }0\leq
\left\vert \alpha \right\vert \leq k-1.  \label{mom con}
\end{equation}
\end{enumerate}
\end{theorem}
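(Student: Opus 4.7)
The plan is to reduce everything to a single \emph{one-step refinement identity}. Let $V_Q$ denote the subspace of $L^2(\mu)$ spanned by $\{\mathbf{1}_{Q'} x^\alpha : Q' \in \mathfrak{C}(Q),\ |\alpha| \le k-1\}$ and $W_Q \subseteq V_Q$ the subspace spanned by $\{\mathbf{1}_Q x^\alpha : |\alpha| \le k-1\}$; the inclusion holds because a polynomial on $Q$ is a polynomial on each child of $Q$. Unwinding the definition of $L_{Q;k}^2(\mu)$ shows it is exactly the orthogonal complement of $W_Q$ inside $V_Q$, hence $V_Q = W_Q \oplus L_{Q;k}^2(\mu)$ orthogonally in $L^2(\mu)$, and translating this decomposition into projections gives
\begin{equation*}
\sum_{Q' \in \mathfrak{C}(Q)} \mathbb{E}_{Q';k}^{\mu} \;=\; \mathbb{E}_{Q;k}^{\mu} + \bigtriangleup_{Q;k}^{\mu}.
\end{equation*}

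From here assertion (3) is immediate from the defining moment-vanishing of $L_{Q;k}^2(\mu)$. For the telescoping identity (2), I iterate the one-step identity along the finite chain of dyadic ancestors $Q=Q_0 \subsetneq Q_1 \subsetneq \cdots \subsetneq Q_m = P$: substituting $\bigtriangleup_{Q_i;k}^{\mu} = \sum_{Q' \in \mathfrak{C}(Q_i)} \mathbb{E}_{Q';k}^{\mu} - \mathbb{E}_{Q_i;k}^{\mu}$ and multiplying by $\mathbf{1}_Q$, only the child of $Q_i$ containing $Q$ contributes, producing a collapsing sum that reduces to $\mathbb{E}_{Q;k}^{\mu} - \mathbf{1}_Q\mathbb{E}_{P;k}^{\mu}$. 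For the mutual orthogonality in (1), I split into two cases: if $P,Q\in\mathcal{D}$ are disjoint, then elements of $L_{P;k}^2(\mu)$ and $L_{Q;k}^2(\mu)$ have disjoint $\mu$-supports; and if $Q\subsetneq P$, letting $P'$ be the child of $P$ containing $Q$, any $h\in L_{P;k}^2(\mu)$ agrees on $P'\supseteq Q$ with a polynomial of degree $\le k-1$, against which every $g\in L_{Q;k}^2(\mu)$ integrates to zero by its defining moment condition. Orthogonality of $\bigtriangleup_{\mathbb{R}^n;k}^{\mu}$ to each $\bigtriangleup_{Q;k}^{\mu}$ follows by the same mechanism, with a global polynomial in $\mathcal{P}_{\mathbb{R}^n}^k(\mu)$ playing the role of $h$.

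Completeness in (1) is where I expect the real work to lie. By telescoping and orthogonality, for $f\in L^2(\mu)$, a point $x$ lying in a large ambient dyadic cube $P^{(M)}$ of sidelength $2^M$, and the dyadic cube $Q_N(x)$ of sidelength $2^{-N}$ containing $x$, the partial sum $\sum_{Q_N(x)\subsetneq Q\subseteq P^{(M)}} \bigtriangleup_{Q;k}^{\mu} f$ equals $\mathbb{E}_{Q_N(x);k}^{\mu}f - \mathbf{1}_{Q_N(x)}\mathbb{E}_{P^{(M)};k}^{\mu}f$. Everything therefore reduces to the two limits (i) $\mathbb{E}_{Q_N(x);k}^{\mu}f \to f$ as $N\to\infty$ and (ii) $\mathbb{E}_{P^{(M)};k}^{\mu}f \to \bigtriangleup_{\mathbb{R}^n;k}^{\mu}f$ as $M\to\infty$, both $\mu$-a.e.\ and in $L^2(\mu)$. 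For (i) I would run a martingale-type argument: the subspaces $W_{Q_N(x)}$ are increasing in $N$, their union is dense in $L^2(\mu)$ by a polynomial-enhanced Lebesgue differentiation, and a maximal inequality promotes norm convergence to a.e.\ convergence. For (ii) the subspaces $W_{P^{(M)}}$ exhaust $\mathcal{P}_{\mathbb{R}^n}^k(\mu)$ essentially by fiat, since $\mathcal{F}_\infty^k(\mu)$ was defined precisely to include only those $x^\alpha$ already in $L^2(\mu)$. The main technical obstacle I foresee is executing (i) cleanly for an arbitrary locally finite $\mu$, in particular when $\mu$ is degenerate on certain cubes so that the natural polynomial span drops dimension; once those convergence statements are established, reassembling the telescoping identity yields both the pointwise and norm forms of convergence in assertion (1).
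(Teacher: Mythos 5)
Your one-step refinement identity $\sum_{Q'\in\mathfrak{C}(Q)}\mathbb{E}_{Q';k}^{\mu}=\mathbb{E}_{Q;k}^{\mu}+\bigtriangleup_{Q;k}^{\mu}$ is exactly the content of the paper's formula for $L_{Q;k}^{2}(\mu)$ as $\mathnormal{\limfunc{Span}}\{\mathbf{1}_{Q'}x^{\alpha}\}\ominus\mathnormal{\limfunc{Span}}\{\mathbf{1}_{Q}x^{\alpha}\}$, and your derivations of the telescoping identities, the moment conditions, and the mutual orthogonality (disjoint supports versus the nesting case, where the larger cube's function restricts to a polynomial on the child containing the smaller cube) coincide with the paper's arguments. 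That portion is correct.

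The gap is in completeness, and you have in fact located it yourself. Your plan requires the limit $\mathbb{E}_{Q_{N}(x);k}^{\mu}f\to f$ both in norm and $\mu$-a.e., which you propose to get from a ``polynomial-enhanced Lebesgue differentiation'' plus a maximal inequality for the projections onto the increasing filtration $\mathcal{V}_{N}=\bigoplus_{\ell(Q)=2^{-N}}W_{Q}$ (note it is these level-$N$ sums, not the individual $W_{Q_{N}(x)}$, that are nested). For a general locally finite $\mu$ this maximal inequality is not standard and is not supplied; as written, assertion (i) is an unproven claim on which the whole of part (1) rests. The paper avoids this entirely by a containment argument: since $\mathbf{1}_{Q'}$ is a degree-zero polynomial times an indicator, each Haar space $L_{Q;1}^{2}(\mu)$ lies in $V_{Q}$, and iterating the one-step identity upward shows $L_{Q;1}^{2}(\mu)\subset\overline{\limfunc{Span}}\{\mathcal{P}_{\mathbb{R}^{n}}^{k}(\mu),\{L_{R;k}^{2}(\mu)\}_{R\supset Q}\}$; completeness of the Alpert system is then inherited from the already established completeness of the Haar system, with no new differentiation theory needed. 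If you want to salvage your route, the norm-convergence half of (i) does follow from abstract Hilbert space theory once you observe that $\bigcup_{N}\mathcal{V}_{N}$ contains all indicators of dyadic cubes and hence is dense by the Haar case --- which is precisely the paper's reduction in disguise --- but the a.e.\ statement still needs a separate argument (e.g., deducing it from the telescoping identities and the Haar-case differentiation theorem) rather than an appeal to an unstated maximal inequality.
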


In the special case of dimension $n=1$, we further investigate uniqueness
and degeneracy of the wavelets constructed in Theorem \ref{main1}. The
system of one-dimensional Alpert wavelets is underdetermined, in general
having $\left( 
\begin{array}{c}
k \\ 
2%
\end{array}%
\right) $ additional degrees of freedom which can be used to impose
additional moment conditions in (3). The degeneracy condition for Alpert
wavelets is phrased in terms of a matrix of moments $\boldsymbol{M}_{Q,k}$
and positive semi-definiteness, and can be interpreted as the degree to
which $\mu $ is a finite sum of point mass measures within a given child of
a cube. Here is our second main result, which includes the main points of
our investigation into uniqueness and degeneracy, but not all of them - see
Subsection \ref{Sub special} below for more. Let%
\begin{equation*}
\boldsymbol{M}_{Q,k}=\int_{Q}\left[ 
\begin{array}{ccccc}
1 & x & \cdots & x^{k-2} & x^{k-1} \\ 
x & x^{2} & \ddots & x^{k-3} & x^{k-2} \\ 
\vdots & \ddots & \ddots & \ddots & \vdots \\ 
x^{k-2} & x^{k-3} & \ddots & x^{2k-4} & x^{2k-3} \\ 
x^{k-1} & x^{k-2} & \cdots & x^{2k-3} & x^{2k-2}%
\end{array}%
\right] d\mu \left( x\right)
\end{equation*}%
be the symmetric matrix of moments of the measure $\mu $ up to order $k-1$
on the interval $Q$.

\begin{theorem}
\label{dim one}Let $\mu $ be a locally finite positive Borel measure on $%
\mathbb{R}$, fix $k\in \mathbb{N}$, and fix a dyadic grid $\mathcal{D}$ in $%
\mathbb{R}$. Then, in addition to parts (1) and (2) of Theorem \ref{main1}
(restricted to $n=1$), we also have:

\begin{enumerate}
\item The dimension of $L_{Q;k}^{2}\left( \mu \right) $ is given by%
\begin{equation*}
\dim L_{Q;k}^{2}\left( \mu \right) =\dim \left( \limfunc{Range}\boldsymbol{L}%
_{Q,k}\bigcap \limfunc{Range}\boldsymbol{R}_{Q,k}\right) \ ,
\end{equation*}%
where $\boldsymbol{L}_{Q,k}$ and $\boldsymbol{R}_{Q,k}$ denote the matrices $%
\boldsymbol{M}_{Q_{\limfunc{left}},k}$ and $\boldsymbol{M}_{Q_{\limfunc{right%
}},k}$ respectively and $Q_{\limfunc{left}}/Q_{\limfunc{right}}$ are the
left and right halves of the interval $Q$. This shows in particular that $%
L_{Q;k}^{2}\left( \mu \right) =k$ for all dyadic intervals $Q$ if and only
if $\boldsymbol{M}_{Q,k}\succ 0$ for all dyadic intervals $Q$.

\item In the case when $\boldsymbol{M}_{Q,k}\succ 0$ for all dyadic
intervals $Q$, we can choose an orthonormal basis $\left\{ a_{Q}^{\mu ,\ell
}\right\} _{\ell =1}^{k}$ of $L_{Q;k}^{2}\left( \mu \right) $ so that in
addition to the moment conditions given in part (3) of Theorem \ref{main1},
the following $\emph{additional}$ moment conditions hold:%
\begin{equation*}
\int a_{Q}^{\mu ,\ell }\left( x\right) x^{i}d\mu \left( x\right) =0,\ \ \ \
\ \text{for all }2\leq \ell \leq k\text{ and }k\leq i\leq k+\ell -2.
\end{equation*}
\end{enumerate}
\end{theorem}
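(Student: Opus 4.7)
My plan for part (1) is to parametrize $L_{Q;k}^2(\mu)$ by bi-polynomial coefficients and compute its dimension via rank-nullity applied to a constrained linear map. Consider
\[
\Phi : \mathbb{R}^k \oplus \mathbb{R}^k \to L^2(\mu|_Q), \qquad \Phi(\mathbf{a},\mathbf{b}) = \mathbf{1}_{Q_{\text{left}}}\sum_{j=0}^{k-1}a_j x^j + \mathbf{1}_{Q_{\text{right}}}\sum_{j=0}^{k-1}b_j x^j,
\]
whose image is the space of bi-polynomials of degree at most $k-1$ on the two children of $Q$. Because $\|\sum a_j x^j\|_{L^2(\mu|_{Q_{\text{left}}})}^2 = \mathbf{a}^T\boldsymbol{L}_{Q,k}\mathbf{a}$ (similarly for the right child), one has $\ker \Phi = \ker \boldsymbol{L}_{Q,k} \oplus \ker \boldsymbol{R}_{Q,k}$. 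The moment conditions defining $L_{Q;k}^2(\mu)$ translate directly into the system $\boldsymbol{L}_{Q,k}\mathbf{a} + \boldsymbol{R}_{Q,k}\mathbf{b} = 0$, so setting $V = \{(\mathbf{a},\mathbf{b}) : \boldsymbol{L}_{Q,k}\mathbf{a} + \boldsymbol{R}_{Q,k}\mathbf{b} = 0\}$ gives $L_{Q;k}^2(\mu) = \Phi(V)$. The key bookkeeping point is that $\ker \Phi \subset V$ (immediate, since $\boldsymbol{L}_{Q,k}\mathbf{a}=0$ and $\boldsymbol{R}_{Q,k}\mathbf{b}=0$ force their sum to vanish). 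Then rank-nullity yields
\[
\dim L_{Q;k}^2(\mu) = \dim V - \dim \ker \Phi = \mathrm{rank}(\boldsymbol{L}_{Q,k}) + \mathrm{rank}(\boldsymbol{R}_{Q,k}) - \dim\bigl(\mathrm{Range}(\boldsymbol{L}_{Q,k}) + \mathrm{Range}(\boldsymbol{R}_{Q,k})\bigr),
\]
and the standard subspace identity $\dim(A+B) + \dim(A\cap B) = \dim A + \dim B$ collapses the right-hand side to $\dim(\mathrm{Range}\,\boldsymbol{L}_{Q,k} \cap \mathrm{Range}\,\boldsymbol{R}_{Q,k})$.

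The equivalence $\dim L_{Q;k}^2(\mu)=k$ for all dyadic $Q$ iff $\boldsymbol{M}_{Q,k}\succ 0$ for all dyadic $Q$ then follows at once: $\boldsymbol{L}_{Q,k} = \boldsymbol{M}_{Q_{\text{left}},k}$ and $\boldsymbol{R}_{Q,k} = \boldsymbol{M}_{Q_{\text{right}},k}$ are PSD Gram matrices, and a PSD matrix has range $\mathbb{R}^k$ iff it is invertible iff it is positive definite. Since every dyadic interval is a child of some dyadic interval, positive definiteness of all $\boldsymbol{L}_{Q,k}, \boldsymbol{R}_{Q,k}$ amounts to $\boldsymbol{M}_{Q,k}\succ 0$ universally.

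For part (2), once $\dim L_{Q;k}^2(\mu)=k$ is in hand, the construction is a recursive top-down Gram–Schmidt. Define the nested subspaces
\[
V_\ell \equiv \Bigl\{f \in L_{Q;k}^2(\mu) : \int_Q f(x) x^i\, d\mu(x) = 0 \text{ for } k\le i \le k+\ell-2\Bigr\}, \qquad V_1 = L_{Q;k}^2(\mu),
\]
so that $V_1 \supset V_2 \supset \cdots \supset V_k$, and each $V_\ell$ is cut out of the $k$-dimensional space $L_{Q;k}^2(\mu)$ by $\ell-1$ linear conditions, giving $\dim V_\ell \ge k-\ell+1$ automatically. I proceed from $\ell=k$ downward: pick $a_Q^{\mu,k}$ as any unit vector in $V_k$ (possible since $\dim V_k\ge 1$), and having orthonormal $a_Q^{\mu,k},\dots,a_Q^{\mu,\ell+1}\in V_{\ell+1}\subset V_\ell$ in hand, choose $a_Q^{\mu,\ell}$ as a unit vector in $V_\ell \ominus \mathrm{span}\{a_Q^{\mu,k},\dots,a_Q^{\mu,\ell+1}\}$; this orthogonal complement has dimension at least $(k-\ell+1)-(k-\ell)=1$, so such a vector exists. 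The $k$ resulting orthonormal elements span $V_1 = L_{Q;k}^2(\mu)$ by dimension count, and by construction each $a_Q^{\mu,\ell}$ lies in $V_\ell$, which is precisely the stated extra moment property.

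The main obstacle is really just the careful linear-algebraic accounting in part (1): verifying $\ker\Phi\subset V$ and keeping track of the three dimensions (kernel of $\Phi$, constraint subspace $V$, and sum/intersection of ranges) until they collapse into the clean intersection formula. Part (2) is then essentially automatic, since no independence of the additional moment functionals on $L_{Q;k}^2(\mu)$ is required — the trivial lower bound $\dim V_\ell \ge k-\ell+1$ from imposing $\ell-1$ linear conditions is exactly what is needed to extend the orthonormal sequence one more step.
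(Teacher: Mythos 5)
Your proof is correct. For part (2) you are following essentially the same strategy as the paper: the paper parametrizes $L_{Q;k}^{2}(\mu)$ by the right-half coefficient vectors $\mathbf{\beta}_{k}^{\ell}$ (after solving $\mathbf{\alpha}_{k}^{\ell}=-\boldsymbol{L}_{I}^{-1}\boldsymbol{R}_{I}\mathbf{\beta}_{k}^{\ell}$), transfers the $L^{2}(\mu)$ inner product to the positive definite matrix $\boldsymbol{X}_{I}$, and then selects the $\mathbf{\beta}_{k}^{\ell}$ top-down so that $\mathbf{\beta}_{k}^{\ell}$ is orthogonal to the $k-\ell$ previously chosen vectors and to the $\ell-1$ vectors $\mathbf{u}_{k}^{k},\dots,\mathbf{u}_{k}^{k+\ell-2}$ encoding the extra moments --- exactly your nested-subspace count $\dim V_{\ell}\geq k-\ell+1$, just written in coordinates; your coordinate-free packaging is cleaner but not a different idea. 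For part (1), however, you supply something the paper only asserts: the paper says it is ``easy to see'' from the moment system $\boldsymbol{L}_{Q,k}\mathbf{a}=-\boldsymbol{R}_{Q,k}\mathbf{b}$ that the dimension equals $\dim(\operatorname{Range}\boldsymbol{L}_{Q,k}\cap\operatorname{Range}\boldsymbol{R}_{Q,k})$, but the solution space $V$ of that system has dimension $2k-\dim(\operatorname{Range}\boldsymbol{L}_{Q,k}+\operatorname{Range}\boldsymbol{R}_{Q,k})$, which is \emph{not} the answer; one must quotient by the coefficient pairs that represent the zero function in $L^{2}(\mu)$. Your identification of $\ker\Phi=\ker\boldsymbol{L}_{Q,k}\oplus\ker\boldsymbol{R}_{Q,k}$ (via the Gram-matrix identity $\|\sum a_{j}x^{j}\|_{L^{2}(\mu|_{Q_{\mathrm{left}}})}^{2}=\mathbf{a}^{T}\boldsymbol{L}_{Q,k}\mathbf{a}$ and positive semidefiniteness), the observation $\ker\Phi\subset V$, and the final collapse via $\dim(A+B)+\dim(A\cap B)=\dim A+\dim B$ is precisely the bookkeeping needed to make the paper's one-line claim rigorous, and your deduction of the positive-definiteness equivalence (every dyadic interval is a child of its dyadic parent) is also correct.
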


The additional moment conditions in part (2) consume the remaining $\left( 
\begin{array}{c}
k \\ 
2%
\end{array}%
\right) $ degrees of freedom available in defining the $2k^{2}$ coefficients
in the functions $\left\{ a_{Q}^{\mu ,\ell }\right\} _{\ell =1}^{k}$, which
we refer to as Alpert functions.

\begin{remark}
There is an analogous theorem in higher dimensions $n>1$, whose formulation
and proof we leave for the interested reader.
\end{remark}

When $k=1$, these theorems reduce to the well-known weighted Haar basis in $%
L^{2}(\mu )$ which is recalled in detail in Section 2. However, when $k\geq 2
$, new wavelet bases are provided by this construction, and in Section 3, we
prove our two main results regarding these new bases, Theorems \ref{main1}
and \ref{dim one}. In the final section of this paper, we use the Alpert
basis to study weighted inequalities for Calder\'{o}n-Zygmund operators on
the real line. A natural proof strategy is to decompose $f\in L^{2}(\sigma )$
and $g\in L^{2}(\omega )$ via a wavelet basis and then analyze the behavior
of the matrix of $T$ relative to this basis. One then arrives at testing
conditions related to the number of moments that the basis possess, as well
as a relaxation of the energy condition required to control certain terms in
the matrix. With this new basis we study a $Tp$ type result in the two
weight setting. See Conjecture \ref{TP} below.

\section{Weighted Haar bases}

In this section we review the existence, uniqueness and degeneracy of the
weighted Haar wavelets, beginning with the local case. Let $\mu $ be a
locally finite positive Borel measure on the unit interval $\left[
0,1\right) $.

\begin{definition}
Set $I_{0}^{0}\equiv \left[ 0,1\right) $, $I_{0}^{1}\equiv \left[ 0,\frac{1}{%
2}\right) $, $I_{1}^{1}\equiv \left[ \frac{1}{2},1\right) $, and in general%
\begin{equation*}
I_{j}^{m}\equiv \left[ \frac{j}{2^{m}},\frac{j+1}{2^{m}}\right) ,\ \ \ \ \
0\leq j\leq 2^{m}-1,\ m\in \mathbb{Z}_{+}\ .
\end{equation*}
\end{definition}

Note that the left half of the interval $I_{j}^{m}$ is $I_{j,\limfunc{left}%
}^{m}=I_{2j}^{m+1}$ and that the right half of the interval $I_{j}^{m}$ is $%
I_{j,\limfunc{right}}^{m}=I_{2j+1}^{m+1}$. We begin by briefly reviewing the
weighted Haar wavelet bases on the real line and in Euclidean space, and in
the next section we will turn to weighted Alpert wavelet bases with more
vanishing moments.

Suppose that $\mu $ is a finite positive Borel measure on $\left[ 0,1\right) 
$ and define 
\begin{equation*}
L^{2}\left( \mu \right) \equiv \left\{ f:\left[ 0,1\right) \rightarrow 
\mathbb{C}\ \mu \text{-measurable}:\int \left\vert f\right\vert ^{2}d\mu
<\infty \right\} .
\end{equation*}%
Let $\left\vert E\right\vert _{\mu }$ denote the $\mu $-measure of a Borel
set $E$. Assume first the nondegeneracy condition that $\mu $ charges every
dyadic subinterval of $\left[ 0,1\right) $, 
\begin{equation}
\left\vert J\right\vert _{\mu }>0\text{ for all for all }J=I_{j}^{m},\ m\in 
\mathbb{Z}_{+},\ 0\leq j\leq 2^{m}-1.  \label{Haar nondeg}
\end{equation}

\begin{definition}
For all $m\in \mathbb{Z}_{+}$ and $0\leq j\leq 2^{m}-1$, define%
\begin{equation}
h_{I_{j}^{m}}^{\mu }\left( x\right) \equiv -\sqrt{\frac{1}{\left\vert
I_{j}^{m}\right\vert _{\mu }}}\sqrt{\frac{\left\vert I_{j,\limfunc{right}%
}^{m}\right\vert _{\mu }}{\left\vert I_{j,\limfunc{left}}^{m}\right\vert
_{\mu }}}\mathbf{1}_{I_{j,\limfunc{left}}^{m}}+\sqrt{\frac{1}{\left\vert
I_{j}^{m}\right\vert _{\mu }}}\sqrt{\frac{\left\vert I_{j,\limfunc{left}%
}^{m}\right\vert _{\mu }}{\left\vert I_{j,\limfunc{right}}^{m}\right\vert
_{\mu }}}\mathbf{1}_{I_{j,\limfunc{right}}^{m}}\ .  \label{simple formula}
\end{equation}%
Finally define%
\begin{equation*}
h_{0}^{\mu }\left( x\right) \equiv \sqrt{\frac{1}{\left\vert \left[
0,1\right) \right\vert _{\mu }}}\mathbf{1}_{\left[ 0,1\right) }\left(
x\right) .
\end{equation*}
\end{definition}

\begin{theorem}
The collection 
\begin{equation*}
\mathcal{U}_{\left[ 0,1\right) }^{\limfunc{Haar},\mu }\equiv \left\{
h_{0}^{\mu }\right\} \cup \left\{ h_{I_{j}^{m}}^{\mu }\right\} _{m\in 
\mathbb{Z}_{+}\text{ and }0\leq j\leq 2^{m}-1}
\end{equation*}%
is an orthonormal basis for $L^{2}\left( \mu \right) $.
\end{theorem}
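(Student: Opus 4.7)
The plan is to establish the two defining properties separately: orthonormality of the family, and density of its linear span in $L^2(\mu)$. The nondegeneracy assumption (\ref{Haar nondeg}) is used at the outset to guarantee that every denominator appearing in (\ref{simple formula}) is nonzero, so each $h_{I_j^m}^\mu$ is a well-defined two-step function.

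Orthonormality is a direct two-point calculation. From (\ref{simple formula}), one reads off $\int h_{I_j^m}^\mu \, d\mu = 0$ (the two weighted masses cancel by design) and $\|h_{I_j^m}^\mu\|_{L^2(\mu)}^2 = (|I_{j,\text{left}}^m|_\mu + |I_{j,\text{right}}^m|_\mu)/|I_j^m|_\mu = 1$. For two distinct dyadic intervals $I, J$ the dyadic structure forces the dichotomy: either $I \cap J = \emptyset$, in which case $h_I^\mu h_J^\mu \equiv 0$, or one is strictly contained in the other, say $J \subsetneq I$. In the nested case $J$ lies entirely inside a single half of $I$, on which $h_I^\mu$ is constant; pulling this constant out of the integral reduces $\langle h_I^\mu, h_J^\mu \rangle_{L^2(\mu)}$ to a multiple of $\int h_J^\mu \, d\mu = 0$. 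The same mean-zero property yields $\langle h_0^\mu, h_{I_j^m}^\mu \rangle_{L^2(\mu)} = 0$.

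For completeness, I would argue in two stages. First, by induction on $M \geq 0$, show that the linear span of $\{h_0^\mu\} \cup \{h_{I_j^m}^\mu : 0 \leq m < M, \, 0 \leq j \leq 2^m-1\}$ coincides with the space $S_M$ of functions that are constant on each generation-$M$ dyadic interval. At step $M+1$, the space $S_{M+1}$ has dimension $2^{M+1}$; any $f \in S_{M+1}$ decomposes uniquely as its generation-$M$ conditional mean (which lies in $S_M$, handled by induction) plus a correction that on each $I_j^M$ has $\mu$-mean zero and is constant on its two halves. Such a correction on $I_j^M$ is a scalar multiple of $h_{I_j^M}^\mu$, so the dimension count closes the induction. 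Second, show that $\bigcup_{M \geq 0} S_M$ is dense in $L^2(\mu)$.

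The genuine content is this density step, since no regularity (doubling, etc.) is imposed on $\mu$. The cleanest route is via martingale convergence: the orthogonal projections $\mathbb{E}_M$ onto $S_M$ form an $L^2(\mu)$-bounded martingale, and $\mathbb{E}_M f \to f$ in $L^2(\mu)$ once one knows that the $\sigma$-algebras $\mathcal{F}_M$ generated by generation-$M$ dyadic intervals increase to the full Borel $\sigma$-algebra on $[0,1)$. This last fact holds because every open subinterval of $[0,1)$ is a countable disjoint union of dyadic subintervals, so the generated $\sigma$-algebra contains all open sets. Combining orthonormality with this completeness statement identifies $\mathcal{U}_{[0,1)}^{\mathrm{Haar},\mu}$ as an orthonormal basis.
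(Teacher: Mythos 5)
Your argument is correct, and its skeleton matches the paper's: a direct verification of orthonormality from the explicit formula (\ref{simple formula}), followed by a completeness argument based on the increasing multiresolution spaces. The orthonormality computations (mean zero, unit norm, the disjoint-or-nested dichotomy with $h_I^\mu$ constant on the half containing $J$) are exactly the "straightforward computation" the paper alludes to, and your inductive identification of the span of the first $M$ generations with the space $S_M$ of generation-$M$ step functions is the finite-dimensional content of the telescoping identities the paper cites. The one place you genuinely diverge is the density step: the paper invokes the pointwise dyadic Lebesgue differentiation theorem for $\mu$ (together with telescoping) to conclude completeness, whereas you use the $L^2(\mu)$ martingale convergence theorem, reducing everything to the fact that the dyadic $\sigma$-algebras generate the Borel $\sigma$-algebra of $[0,1)$. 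Your route is arguably the cleaner one for the $L^2$ statement, since it is a purely Hilbert-space argument (strong convergence of projections onto an increasing family of closed subspaces) and avoids any appeal to almost-everywhere differentiation or maximal function estimates for a general finite Borel measure; the paper's route, on the other hand, delivers the stronger pointwise $\mu$-a.e.\ convergence of the expansion, which the authors use elsewhere (it is asserted in part (1) of Theorem \ref{main1}). Both arguments rely on the nondegeneracy hypothesis (\ref{Haar nondeg}) only to ensure the formula (\ref{simple formula}) is well defined and that $\dim S_M = 2^M$, as you note.
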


\begin{proof}
It is a straightforward computation to see that $\mathcal{U}_{\left[
0,1\right) }^{\limfunc{Haar},\mu }$ is an orthonormal set in $L^{2}\left(
\mu \right) $, and the dyadic Lebesgue differentiation theorem with respect
to the measure $\mu $, together with the multi-resolution telescoping
identities, show that $\mathcal{U}_{\left[ 0,1\right) }^{\limfunc{Haar},\mu
} $ is complete in $L^{2}\left( \mu \right) $.
\end{proof}

\subsection{Derivation of the explicit formula $\left( \protect\ref{simple
formula}\right) $}

The coefficients on the functions $\mathbf{1}_{I_{j,\limfunc{left}}^{m}}$
and $\mathbf{1}_{I_{j,\limfunc{right}}^{m}}$ are derived in the following
way. If we set 
\begin{equation*}
h_{I_{j}^{m}}\left( x\right) \equiv -\alpha \mathbf{1}_{I_{j,\limfunc{left}%
}^{m}}+\beta I_{j,\limfunc{right}}^{m},
\end{equation*}%
and demand that both%
\begin{eqnarray*}
0 &=&\int_{I_{j}^{m}}h_{I_{j}^{m}}d\mu =-\alpha \left\vert I_{j,\limfunc{left%
}}^{m}\right\vert _{\mu }+\beta \left\vert I_{j,\limfunc{right}%
}^{m}\right\vert _{\mu }\ , \\
1 &=&\int_{I_{j}^{m}}\left\vert h_{I_{j}^{m}}\right\vert ^{2}d\mu =\alpha
^{2}\left\vert I_{j,\limfunc{left}}^{m}\right\vert _{\mu }+\beta
^{2}\left\vert I_{j,\limfunc{right}}^{m}\right\vert _{\mu },
\end{eqnarray*}%
then we must have%
\begin{equation*}
\beta =\frac{\left\vert I_{j,\limfunc{left}}^{m}\right\vert _{\mu }}{%
\left\vert I_{j,\limfunc{right}}^{m}\right\vert _{\mu }}\alpha \text{ and }%
1=\alpha ^{2}\left\vert I_{j,\limfunc{left}}^{m}\right\vert _{\mu }+\left( 
\frac{\left\vert I_{j,\limfunc{left}}^{m}\right\vert _{\mu }}{\left\vert
I_{j,\limfunc{right}}^{m}\right\vert _{\mu }}\right) ^{2}\alpha
^{2}\left\vert I_{j,\limfunc{right}}^{m}\right\vert _{\mu }=\alpha
^{2}\left\vert I_{j}^{m}\right\vert _{\mu }\frac{\left\vert I_{j,\limfunc{%
left}}^{m}\right\vert _{\mu }}{\left\vert I_{j,\limfunc{right}%
}^{m}\right\vert _{\mu }},
\end{equation*}%
which implies%
\begin{equation*}
\alpha =\sqrt{\frac{\left\vert I_{j,\limfunc{right}}^{m}\right\vert _{\mu }}{%
\left\vert I_{j}^{m}\right\vert _{\mu }\left\vert I_{j,\limfunc{left}%
}^{m}\right\vert _{\mu }}}\text{ and }\beta =\sqrt{\frac{\left\vert I_{j,%
\limfunc{left}}^{m}\right\vert _{\mu }}{\left\vert I_{j}^{m}\right\vert
_{\mu }\left\vert I_{j,\limfunc{right}}^{m}\right\vert _{\mu }}},
\end{equation*}%
where we see that $\alpha $ and $\beta $ are uniquely determined up to sign.

\subsection{The Haar degenerate case}

Here we examine what happens when the nondegeneracy condition (\ref{Haar
nondeg}) fails.

\begin{lemma}
If at least one of the two children of $I_{j}^{m}$ fails to be charged by $%
\mu $, then if $h_{I_{j}^{m}}$ is defined according to the derivation
outlined above, we have that $h_{I_{j}^{m}}\equiv 0$.
\end{lemma}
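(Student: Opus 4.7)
The plan is simply to track what happens to the explicit expression $(\ref{simple formula})$ when one of the two children is $\mu$-null, and then either read off the answer or, equivalently, invoke the two defining equations used in the derivation. By the symmetry between the left and right children, I may assume without loss of generality that $|I_{j,\limfunc{left}}^{m}|_{\mu }=0$.

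Under this assumption, $\mathbf{1}_{I_{j,\limfunc{left}}^{m}}$ is the zero element of $L^{2}(\mu )$, so the first summand $-\alpha \,\mathbf{1}_{I_{j,\limfunc{left}}^{m}}$ appearing in the formal expression for $h_{I_{j}^{m}}^{\mu }$ represents the zero element of $L^{2}(\mu )$ regardless of the value the formula assigns to $\alpha $ (this is the usual $0\cdot \infty = 0$ convention applied to a coefficient times an $L^{2}(\mu )$-null indicator). The coefficient of the second summand, as given by $(\ref{simple formula})$, is
\begin{equation*}
\beta =\sqrt{\tfrac{|I_{j,\limfunc{left}}^{m}|_{\mu }}{|I_{j}^{m}|_{\mu }\,|I_{j,\limfunc{right}}^{m}|_{\mu }}},
\end{equation*}
whose numerator is zero. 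If $|I_{j,\limfunc{right}}^{m}|_{\mu }>0$ then $\beta =0$ outright; if instead $|I_{j,\limfunc{right}}^{m}|_{\mu }=0$ as well, then $\mathbf{1}_{I_{j,\limfunc{right}}^{m}}$ is also the zero element of $L^{2}(\mu )$ and the second summand again represents zero. In either subcase $h_{I_{j}^{m}}^{\mu }\equiv 0$ in $L^{2}(\mu )$, as claimed.

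Alternatively — and this is really the content of the lemma — the same conclusion can be read directly from the two equations driving the derivation. The vanishing-mean condition $-\alpha |I_{j,\limfunc{left}}^{m}|_{\mu }+\beta |I_{j,\limfunc{right}}^{m}|_{\mu }=0$ forces the coefficient on any charged child to vanish as soon as the opposite child is uncharged, so $h_{I_{j}^{m}}^{\mu }$ reduces to a scalar multiple of an indicator which is itself null in $L^{2}(\mu )$. There is no real obstacle to overcome: the point is merely that the normalization condition $\int |h|^{2}\,d\mu =1$ becomes incompatible with the orthogonality-to-constants condition when one child has no mass, and the derivation gracefully collapses to the zero function.
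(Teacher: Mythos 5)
Your proof is correct and follows essentially the same route as the paper: the vanishing-moment condition (equivalently, the explicit formula it produces) forces the coefficient on the charged child to vanish, and the remaining term is supported on a $\mu$-null set, hence zero in $L^{2}(\mu)$. Your explicit treatment of the subcase where both children are null matches the paper's preliminary observation that $h_{I_{j}^{m}}=0$ whenever $\left\vert I_{j}^{m}\right\vert _{\mu }=0$.
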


\begin{proof}
Fix $I_{j}^{m}$ with $m\in \mathbb{Z}_{+}$ and $0\leq j\leq 2^{m}-1$. If $%
\left\vert I_{j}^{m}\right\vert _{\mu }=0$, then clearly $h_{I_{j}^{m}}=0$
in $L^{2}\left( \mu \right) $. If just one of the children of $I_{j}^{m}$ is
not charged by $\mu $, say $\left\vert I_{j,\limfunc{left}}^{m}\right\vert
_{\mu }=0$ but $\left\vert I_{j,\limfunc{right}}^{m}\right\vert _{\mu }>0$,
then the moment requirement above becomes $0=-\alpha \left\vert I_{j,%
\limfunc{left}}^{m}\right\vert _{\mu }+\beta \left\vert I_{j,\limfunc{right}%
}^{m}\right\vert _{\mu }=\beta \left\vert I_{j,\limfunc{right}%
}^{m}\right\vert _{\mu }$, which implies $\beta =0$, and hence $%
h_{I_{j}^{m}}=-\alpha \mathbf{1}_{I_{j,\limfunc{left}}^{m}}+\beta I_{j,%
\limfunc{right}}^{m}=-\alpha \mathbf{1}_{I_{j,\limfunc{left}}^{m}}\equiv 0$
in $L^{2}\left( \mu \right) $. Thus we see that $h_{I_{j}^{m}}\equiv 0$ if
at least one of its children is not charged by $\mu $.
\end{proof}

The resulting pared collection $\mathcal{U}_{\left[ 0,1\right) }^{\limfunc{%
Haar},\mu }\equiv \left\{ h_{0}^{\mu }\right\} \cup \left\{
h_{I_{j}^{m}}^{\mu }\right\} _{m\in \mathbb{Z}_{+}\text{ and }0\leq j\leq
2^{m}-1}$, where now $h_{I_{j}^{m}}^{\mu }$ is removed if it vanishes
identically, is still an orthonormal basis for $L^{2}\left( \mu \right) $.
Indeed, this follows from the fact that the telescoping identities still
hold, and hence Lebesgue's dyadic differentiation theorem continues to show
the pared collection $\mathcal{U}_{\left[ 0,1\right) }^{\limfunc{Haar},\mu }$
is complete in $L^{2}\left( \mu \right) $.

\subsection{The global case}

If $\mu $ is a locally finite positive Borel measure on the real line $%
\mathbb{R}$, $\mathcal{D}$ is a dyadic grid in $\mathbb{R}$, and%
\begin{equation*}
h_{I}^{\mu }\left( x\right) \equiv \left\{ 
\begin{array}{ccc}
-\sqrt{\frac{1}{\left\vert I\right\vert _{\mu }}}\sqrt{\frac{\left\vert I_{%
\limfunc{right}}\right\vert _{\mu }}{\left\vert I_{\limfunc{left}%
}\right\vert _{\mu }}}\mathbf{1}_{I_{\limfunc{left}}}+\sqrt{\frac{1}{%
\left\vert I\right\vert _{\mu }}}\sqrt{\frac{\left\vert I_{\limfunc{left}%
}\right\vert _{\mu }}{\left\vert I_{\limfunc{right}}\right\vert _{\mu }}}%
\mathbf{1}_{I_{\limfunc{right}}} & \text{ if } & \min \left\{ \left\vert I_{%
\limfunc{right}}\right\vert _{\mu },\left\vert I_{\limfunc{left}}\right\vert
_{\mu }\right\} >0 \\ 
0 & \text{ if } & \min \left\{ \left\vert I_{\limfunc{right}}\right\vert
_{\mu },\left\vert I_{\limfunc{left}}\right\vert _{\mu }\right\} =0%
\end{array}%
\right.
\end{equation*}%
for each dyadic interval $I\in \mathcal{D}$, then the collection 
\begin{equation*}
\mathcal{U}_{\mathbb{R}}^{\limfunc{Haar},\mu }\equiv \left\{ h_{I}^{\mu
}\right\} _{I\in \mathcal{D}\ }
\end{equation*}%
is an orthonormal basis for $L^{2}\left( \mu \right) $ (where we of course
discard those $h_{I}^{\mu }$ that vanish identically).

\subsection{The higher dimensional case}

Again the local case $L^{2}\left( \left[ 0,1\right) ^{n};\mu \right) $ and
the global case $L^{2}\left( \mathbb{R}^{n};\mu \right) $ are treated
similarly and we only consider the global case $L^{2}\left( \mu \right)
=L^{2}\left( \mathbb{R}^{n};\mu \right) $ here. So suppose that $\mu $ is a
locally finite positive Borel measure on $\mathbb{R}^{n}$ and that $\mathcal{%
D}$ is a dyadic grid on $\mathbb{R}^{n}$. Given a dyadic cube $Q\in \mathcal{%
D}$ with $\left\vert Q\right\vert _{\mu }>0$, let $\bigtriangleup _{Q}^{\mu
} $ denote orthogonal projection onto the finite dimensional subspace $%
L_{Q;1}^{2}\left( \mu \right) $ of $L^{2}\left( \mu \right) $ that consists
of linear combinations of the indicators of\ the children $\mathfrak{C}%
\left( Q\right) $ of $Q$ that have $\mu $-mean zero over $Q$:%
\begin{equation*}
L_{Q;1}^{2}\left( \mu \right) \equiv \left\{ f=\dsum\limits_{Q^{\prime }\in 
\mathfrak{C}\left( Q\right) }a_{Q^{\prime }}\mathbf{1}_{Q^{\prime
}}:a_{Q^{\prime }}\in \mathbb{R},\int_{Q}fd\mu =0\right\} \\
=\mathnormal{\limfunc{Span}}\{\mathbf{1}_{Q^{\prime }}:Q^{\prime }\in 
\mathfrak{C}\left( Q\right) ,\left\vert Q^{\prime }\right\vert _{\mu
}>0\}\ominus \mathnormal{\limfunc{Span}}\{\mathbf{1}_{Q}\}.
\end{equation*}%
Thus, among other things, we see that $\dim L_{Q;1}^{2}=\#\{Q^{\prime }\in 
\mathfrak{C}(Q):\left\vert Q^{\prime }\right\vert _{\mu }>0\}-1$. If $%
\left\vert Q\right\vert _{\mu }=0$, set $\bigtriangleup _{Q}^{\mu }=0$. Then
we have the important telescoping property for dyadic cubes $Q_{1}\subset
Q_{2}$ (below $[Q_{1},Q_{2}]=\{Q:Q_{1}\subset Q\subset Q_{2}\}$):%
\begin{equation}
\mathbf{1}_{Q_{0}}\left( x\right) \left( \dsum\limits_{Q\in \left[
Q_{1},Q_{2}\right] }\bigtriangleup _{Q}^{\mu }f\left( x\right) \right) =%
\mathbf{1}_{Q_{0}}\left( x\right) \left( \mathbb{E}_{Q_{0}}^{\mu }f-\mathbb{E%
}_{Q_{2}}^{\mu }f\right) ,\ \ \ \ \ Q_{0}\in \mathfrak{C}\left( Q_{1}\right)
,\ f\in L^{2}\left( \mu \right) .  \label{telescope}
\end{equation}%
We will at times find it convenient to use a fixed orthonormal basis $%
\left\{ h_{Q}^{\mu ,a}\right\} _{a\in \Gamma _{n}}$ of $L_{Q}^{2}\left( \mu
\right) $ where $\Gamma _{n}$ is any convenient index set with cardinality
equal to the dimension of $L_{Q}^{2}\left( \mu \right) $, i.e. 
\begin{equation*}
\#\Gamma _{n}=\dim L_{Q}^{2}\left( \mu \right) =\#\left\{ Q^{\prime }\in 
\mathfrak{C}\left( Q\right) :\left\vert Q^{\prime }\right\vert _{\mu
}>0\right\} -1,
\end{equation*}%
where the second equality here follows from the fact that the functions in $%
L_{Q}^{2}\left( \mu \right) $ have vanishing mean. Then $\left\{ h_{Q}^{\mu
,a}\right\} _{a\in \Gamma _{n}\text{ and }Q\in \mathcal{D}}$ is an
orthonormal basis for $L^{2}\left( \mu \right) $, with the understanding
that we add the constant function $\mathbf{1}$ if $\mu $ is a finite
measure. In particular, if $\mu $ is an infinite measure, we have%
\begin{eqnarray*}
f\left( x\right) &=&\sum_{Q\in \mathcal{D}}\bigtriangleup _{Q}^{\mu }f\left(
x\right) ,\ \ \ \ \ \mu -a.e.x\in \mathbb{R}^{n}, \\
\left\Vert f\right\Vert _{L^{2}\left( \mu \right) }^{2} &=&\sum_{Q\in 
\mathcal{D}}\left\Vert \bigtriangleup _{Q}^{\mu }f\right\Vert _{L^{2}\left(
\mu \right) }^{2}=\sum_{Q\in \mathcal{D}}\sum_{a\in \Gamma _{n}}\left\vert 
\widehat{f}\left( Q\right) \right\vert ^{2},
\end{eqnarray*}%
where%
\begin{equation*}
\left\vert \widehat{f}\left( Q\right) \right\vert ^{2}\equiv \sum_{a\in
\Gamma _{n}}\left\vert \left\langle f,h_{Q}^{\mu ,a}\right\rangle _{\mu
}\right\vert ^{2},
\end{equation*}%
and the measure is suppressed in the notation. Indeed, this follows from (%
\ref{telescope}) and Lebesgue's differentiation theorem for cubes.

\section{Weighted Alpert wavelets with higher vanishing moments}

Let $\mu $ be a locally finite positive Borel measure on $\mathbb{R}^{n}$,
and fix $k\in \mathbb{N}$. In analogy with the definition of $%
L_{Q;1}^{2}\left( \mu \right) $ above for $Q\in \mathcal{P}^{n}$, we denote
by $L_{Q;k}^{2}\left( \mu \right) $ the finite dimensional subspace of $%
L^{2}\left( \mu \right) $ that consists of linear combinations of the
indicators of\ the children $\mathfrak{C}\left( Q\right) $ of $Q$ multiplied
by polynomials of degree at most $k-1$, and such that the linear
combinations have vanishing $\mu $-moments on the cube $Q$ up to order $k-1$:%
\begin{equation*}
L_{Q;k}^{2}\left( \mu \right) \equiv \left\{ f=\dsum\limits_{Q^{\prime }\in 
\mathfrak{C}\left( Q\right) }\mathbf{1}_{Q^{\prime }}p_{Q^{\prime };k}\left(
x\right) :\int_{Q}f\left( x\right) x_{i}^{\ell }d\mu \left( x\right) =0,\ \
\ \text{for }0\leq \ell \leq k-1\text{ and }1\leq i\leq n\right\} ,
\end{equation*}%
where $p_{Q^{\prime };k}\left( x\right) =\sum_{\alpha \in \mathbb{Z}%
_{+}^{n}:\left\vert \alpha \right\vert \leq k-1\ }a_{Q^{\prime };\alpha
}x^{\alpha }$ is a polynomial in $\mathbb{R}^{n}$ of degree $\left\vert
\alpha \right\vert =\alpha _{1}+...+\alpha _{n}$ at most $k-1$. Here $%
x^{\alpha }=x_{1}^{\alpha _{1}}x_{2}^{\alpha _{2}}...x_{n}^{\alpha _{n}}$.
Let $d_{Q;k}\equiv \dim L_{Q;k}^{2}\left( \mu \right) $ be the dimension of
the finite dimensional linear space $L_{Q;k}^{2}\left( \mu \right) $. Note
that the space $L_{Q;k}^{2}\left( \mu \right) $ can also be written as: 
\begin{equation}
\mathnormal{\limfunc{Span}}\{\mathbf{1}_{Q^{\prime }}x^{\alpha }:Q^{\prime
}\in \mathfrak{C}(Q),\left\vert Q^{\prime }\right\vert _{\mu }>0,\left\vert
\alpha \right\vert \leq k-1\}\ominus \left( \oplus _{\alpha :\left\vert
\alpha \right\vert \leq k-1}\mathbf{1}_{Q}x^{\alpha }\right) .
\label{also written}
\end{equation}

We begin with the proof of Theorem \ref{main1} in the next subsection below,
and we will complete the proof of Theorem \ref{dim one} in the third
subsection. In the final subsection we will give a complete and detailed
answer to both uniqueness and degeneracy in the special case when $n=1$ and $%
k=2$.

\subsection{Proof of Theorem \protect\ref{main1}}

We begin with an estimate of the dimension $d_{Q;k}$.

\begin{proposition}
\label{number}We have: 
\begin{equation*}
0\leq d_{Q;k}\leq \left( 2^{n}-1\right) A_{k,n}=\left( 2^{n}-1\right) \binom{%
n+k-1}{n},
\end{equation*}%
where $A_{k,n}$ denotes the number of non-negative integer solutions to $%
\alpha _{1}+\cdots +\alpha _{n}\leq k-1$.
\end{proposition}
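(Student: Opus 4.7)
The plan is to read off the dimension count directly from the alternate characterization supplied in equation (\ref{also written}), which presents $L_{Q;k}^{2}(\mu)$ as an orthogonal complement $V'\ominus W''$ inside $L^{2}(\mu)$, where
$$V'=\operatorname{Span}\{\mathbf{1}_{Q'}x^{\alpha}:Q'\in\mathfrak{C}(Q),\ |Q'|_{\mu}>0,\ |\alpha|\leq k-1\},\quad W''=\operatorname{Span}\{\mathbf{1}_{Q}x^{\alpha}:|\alpha|\leq k-1\}.$$
Since $\mathbf{1}_{Q}=\sum_{Q'\in\mathfrak{C}(Q),\,|Q'|_{\mu}>0}\mathbf{1}_{Q'}$ in $L^{2}(\mu)$, one has $W''\subseteq V'$, and therefore $d_{Q;k}=\dim V'-\dim W''$; the lower bound $d_{Q;k}\geq 0$ is then immediate.

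For the upper bound I would first decompose $V'$ orthogonally as $V'=\bigoplus_{Q'\in S}V'_{Q'}$, where $S:=\{Q'\in\mathfrak{C}(Q):|Q'|_{\mu}>0\}$ and $V'_{Q'}:=\operatorname{Span}\{\mathbf{1}_{Q'}x^{\alpha}:|\alpha|\leq k-1\}$; this is legitimate because distinct children have disjoint supports. Each summand obeys $\dim V'_{Q'}\leq A_{k,n}$ by construction, and $|S|\leq 2^{n}$. The key step is to show $\dim W''\geq \dim V'_{Q^{\ast}}$ for any single fixed child $Q^{\ast}\in S$. To accomplish this I would introduce the linear map $\pi_{Q^{\ast}}\colon W''\to V'_{Q^{\ast}}$ defined on generators by $\mathbf{1}_{Q}p\mapsto \mathbf{1}_{Q^{\ast}}p$. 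It is well defined in $L^{2}(\mu)$, for if $\mathbf{1}_{Q}p=\mathbf{1}_{Q}q$ in $L^{2}(\mu)$ then $p=q$ $\mu$-a.e.\ on $Q$, hence on $Q^{\ast}\subset Q$, whence $\mathbf{1}_{Q^{\ast}}p=\mathbf{1}_{Q^{\ast}}q$ in $L^{2}(\mu)$; and it is surjective because it sends the spanning set $\{\mathbf{1}_{Q}x^{\alpha}\}_{|\alpha|\leq k-1}$ of $W''$ onto the spanning set $\{\mathbf{1}_{Q^{\ast}}x^{\alpha}\}_{|\alpha|\leq k-1}$ of $V'_{Q^{\ast}}$.

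Combining these ingredients: if $|Q|_{\mu}=0$ then $V'=W''=\{0\}$ and there is nothing to prove; otherwise $S\neq\emptyset$ and, fixing any $Q^{\ast}\in S$,
$$d_{Q;k}=\dim V'-\dim W''\leq \sum_{Q'\in S}\dim V'_{Q'}-\dim V'_{Q^{\ast}}=\sum_{Q'\in S\setminus\{Q^{\ast}\}}\dim V'_{Q'}\leq (|S|-1)A_{k,n}\leq (2^{n}-1)A_{k,n}.$$
The identity $A_{k,n}=\binom{n+k-1}{n}$ is the standard stars-and-bars count for monomials of degree $\leq k-1$ in $n$ variables. The only step that requires any care is verifying that $\pi_{Q^{\ast}}$ is simultaneously well defined on $L^{2}(\mu)$-equivalence classes and surjective in the presence of possible $\mu$-degeneracies, which the argument above dispatches; everything else reduces to an orthogonal dimension count.
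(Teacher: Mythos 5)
Your proof is correct and follows essentially the same route as the paper's: both read the dimension off the decomposition (\ref{also written}) and exploit the surjectivity of the restriction map $\mathbf{1}_{Q}p\mapsto \mathbf{1}_{Q'}p$ to cancel one child's worth of dimension against $\dim \operatorname{Span}\{\mathbf{1}_{Q}x^{\alpha}:|\alpha|\leq k-1\}$. The paper phrases the same count by bounding each child's span by $B=\dim \operatorname{Span}\{\mathbf{1}_{Q}x^{\alpha}\}$ and computing $\#S\cdot B-B\leq(2^{n}-1)B\leq(2^{n}-1)A_{k,n}$, so the two arguments differ only in bookkeeping.
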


\begin{proof}
First, if the functions $\{\mathbf{1}_{Q}(x)x^{\alpha }:\left\vert \alpha
\right\vert \leq k-1\}$ are linearly independent, then 
\begin{equation*}
\dim \mathnormal{\limfunc{Span}}\{\mathbf{1}_{Q}(x)x^{\alpha }:\left\vert
\alpha \right\vert \leq k-1\}=A_{k,n}.
\end{equation*}%
Now, let $B$ denote $\dim \mathnormal{\limfunc{Span}}\{\mathbf{1}%
_{Q}(x)x^{\alpha }:\left\vert \alpha \right\vert \leq k-1\}$. Then it
follows that 
\begin{equation*}
\dim \mathnormal{\limfunc{Span}}\{\mathbf{1}_{Q^{\prime }}(x)x^{\alpha
}:\left\vert \alpha \right\vert \leq k-1\}\leq B,\ \ \ \ \ \text{for all }%
Q^{\prime }\in \mathfrak{C}\left( {Q}\right) .
\end{equation*}%
Thus 
\begin{equation*}
\dim L_{Q;k}^{2}\left( \mu \right) \leq \#\{Q^{\prime }\in \mathfrak{C}{Q}%
:\left\vert Q^{\prime }\right\vert _{\mu }>0\}B-B\leq (2^{n}-1)B\leq
(2^{n}-1)A_{k,n}.
\end{equation*}%
Finally it is well-known that the number of nonnegative solutions to $\alpha
_{1}+\cdots +\alpha _{n}=j$ is $\binom{n-1+j}{j}$ (simply choose $n-1$ boxes
from a row of $n+j-1$ boxes and put a single ball in each of the unchosen
boxes - then let $\alpha _{i}$ be the number of balls between the $\left(
i-1\right) ^{th}$ box and the $i^{th}$ box), and so $A_{k,n}=\sum_{j=0}^{k-1}%
\binom{n-1+j}{j}=\binom{n-1+k}{k-1}$ by induction on $k$.
\end{proof}

Now we restrict attention to a fixed dyadic grid $\mathcal{D}$ in $\mathbb{R}%
^{n}$. For $P,Q\in \mathcal{D}$ dyadic cubes, the subspaces $%
L_{P;k}^{2}\left( \mu \right) $ and $L_{Q;k}^{2}\left( \mu \right) $ are
orthogonal for $P\neq Q$, i.e. $\left\langle f,g\right\rangle _{L^{2}\left(
\mu \right) }=0$ for $f\in L_{P;k}^{2}\left( \mu \right) $ and $g\in
L_{Q;k}^{2}\left( \mu \right) $. Indeed, the only case that needs checking
is when either $P\subsetneqq Q$ or $Q\subsetneqq P$. If $P\subsetneq Q$,
then the restriction of $g$ to $P$ is a polynomial of degree less than or
equal to $k-1$. But by definition, $L_{P;k}^{2}(\mu )$ is orthogonal to such
functions and so $\left\langle f,g\right\rangle _{L^{2}\left( \mu \right)
}=0 $. Of course similar reasoning holds in the case $Q\subsetneq P$.

Now define%
\begin{eqnarray*}
&&\mathcal{F}_{\infty }^{k}\left( \mu \right) \equiv \left\{ \alpha \in 
\mathbb{Z}_{+}^{n}:\left\vert \alpha \right\vert \leq k-1:x^{\alpha }\in
L^{2}\left( \mu \right) \right\} \ , \\
&&\ \ \ \ \ \ \ \ \ \ \ \ \ \ \ \text{and }\mathcal{P}_{\mathbb{R}%
^{n}}^{k}\left( \mu \right) \equiv \limfunc{Span}\left\{ x^{\alpha }\right\}
_{\alpha \in \mathcal{F}_{\infty }^{k}}\ .
\end{eqnarray*}%
We claim that $L^{2}\left( \mu \right) $ is the Hilbert space direct sum of $%
\mathcal{P}_{\mathbb{R}^{n}}^{k}\left( \mu \right) $ in $L^{2}\left( \mu
\right) $ and the finite dimensional subspaces $\left\{ L_{Q;k}^{2}\left(
\mu \right) \right\} _{Q\in \mathcal{D}}$, namely 
\begin{equation*}
L^{2}\left( \mu \right) =\mathcal{P}_{\mathbb{R}^{n}}^{k}\left( \mu \right)
\oplus \left( {\Large \oplus }_{Q\in \mathcal{D}}L_{Q;k}^{2}\left( \mu
\right) \right) .
\end{equation*}%
To see this, fix a large dyadic cube $P\in \mathcal{D}$. Set%
\begin{equation*}
\mathcal{P}_{P}^{k}\equiv \left\{ f=\mathbf{1}_{P}\left( x\right) \
p_{k}\left( x\right) :\alpha \in \mathbb{Z}_{+}^{n}:\left\vert \alpha
\right\vert \leq k-1\right\}
\end{equation*}%
to be the linear space of restrictions to $P$ of polynomials $p_{k}\left(
x\right) $ having degree at most $k-1$. Then for $Q\subset P$, we have%
\begin{eqnarray*}
L_{Q;1}^{2}\left( \mu \right) &=&\left\{ f=\dsum\limits_{Q^{\prime }\in 
\mathfrak{C}\left( Q\right) }a_{Q^{\prime }}\mathbf{1}_{Q^{\prime
}}:a_{Q^{\prime }}\in \mathbb{R},\int_{Q}fd\mu =0\right\} \\
&\subset &\overline{\limfunc{Span}}\left\{ \mathcal{P}_{P}^{k},\left\{
L_{R;k}^{2}\left( \mu \right) \right\} _{R\in \mathcal{D}:\ Q\subset
R\subset P}\right\} .
\end{eqnarray*}%
Now let $P$ tend to infinity to conclude that%
\begin{equation*}
L_{Q;1}^{2}\left( \mu \right) \subset \overline{\limfunc{Span}}\left\{ 
\mathcal{P}_{\mathbb{R}^{n}}^{k}\left( \mu \right) ,\left\{
L_{R;k}^{2}\left( \mu \right) \right\} _{R\in \mathcal{D}:\ Q\subset
R}\right\} .
\end{equation*}%
But we already know that the Haar spaces $\mathcal{P}_{\mathbb{R}%
^{n}}^{1}\left( \mu \right) $ and $L_{Q;1}^{2}\left( \mu \right) $ form a
direct sum decomposition of $L^{2}\left( \mu \right) $, i.e. 
\begin{equation*}
L^{2}\left( \mu \right) =\mathcal{P}_{\mathbb{R}^{n}}^{1}\left( \mu \right)
\oplus \left( {\Large \oplus }_{Q\in \mathcal{D}}L_{Q;1}^{2}\left( \mu
\right) \right) ,
\end{equation*}%
and hence we see that 
\begin{equation*}
L^{2}\left( \mu \right) =\mathcal{P}_{\mathbb{R}^{n}}^{1}\left( \mu \right)
\oplus \left( {\Large \oplus }_{Q\in \mathcal{D}}L_{Q;1}^{2}\left( \mu
\right) \right) \subset \mathcal{P}_{\mathbb{R}^{n}}^{k}\left( \mu \right)
\oplus \left( {\Large \oplus }_{Q\in \mathcal{D}}L_{Q;k}^{2}\left( \mu
\right) \right) \subset L^{2}\left( \mu \right) .
\end{equation*}%
Formula (\ref{also written}) gives the telescoping identities (\ref%
{telescoping}), and the moment conditions (\ref{mom con}) are immediate from
the definition of $L_{Q;k}^{2}\left( \mu \right) $. This completes the proof
of Theorem \ref{main1}.

\subsection{Uniqueness and degeneracy}

In this subsection we begin to investigate the lack of uniqueness of the
orthogonal projections $\left\{ \bigtriangleup _{\mathbb{R}^{n};k}^{\mu
}\right\} \cup \left\{ \bigtriangleup _{Q;k}^{\mu }\right\} _{Q\in \mathcal{D%
}^{n}}$ and their degeneracy when%
\begin{equation*}
d_{Q;k}<\left( 2^{n}-1\right) \binom{n+k-1}{n}
\end{equation*}%
is not maximal. We can of course use the Gram-Schmidt orthogonalization
algorithm to find an orthonormal basis $\left\{ k_{\mathbb{R}^{n}}^{\mu
,a}\right\} _{a\in \Gamma _{n}^{k}\left( \mathbb{R}^{n}\right) }$ of $%
\mathcal{P}_{\mathbb{R}^{n}}^{k}\left( \mu \right) $ where $\Gamma
_{n}^{k}\left( \mathbb{R}^{n}\right) $ is any convenient index set with
cardinality equal to $\dim \mathcal{P}_{\mathbb{R}^{n}}^{k}\left( \mu
\right) =\#\mathcal{F}_{\infty }^{k}\left( \mu \right) $, and also an
orthonormal basis $\left\{ h_{Q}^{\mu ,a}\right\} _{a\in \Gamma _{n}^{k}}$
of $L_{Q;k}^{2}\left( \mu \right) $ where $\Gamma _{n}^{k}$ is any
convenient index set with cardinality equal to $d_{Q;k}$, the dimension of $%
L_{Q}^{2,1}\left( \mu \right) $, i.e. 
\begin{equation*}
\#\Gamma _{n}^{k}\left( Q\right) =\dim L_{Q;k}^{2}\left( \mu \right) .
\end{equation*}%
Then for any $J\in \mathcal{D}\cup \left\{ \mathbb{R}^{n}\right\} $, the set 
$\left\{ k_{J}^{\mu ,a}\right\} _{a\in \Gamma _{n}^{k}\left( J\right) }\cup
\left\{ h_{Q}^{\mu ,a}\right\} _{a\in \Gamma _{n}^{k}\left( Q\right) \text{
and }Q\in \mathcal{D}\text{ with }Q\subset J}$ $\ $is an orthonormal basis
for $L^{2}\left( \mathbf{1}_{J}\mu \right) $.

In the case of dimension $n=1$ with $k=1$ vanishing moment, Proposition \ref%
{number} shows that the dimension $d_{Q;1}$ of the subspace $%
L_{Q;1}^{2}\left( \mu \right) $ satisfies $0\leq d_{Q;1}\leq \left(
2^{1}-1\right) \left( 
\begin{array}{c}
1+1-1 \\ 
1-1%
\end{array}%
\right) =1$. So if $L_{Q;1}^{2}\left( \mu \right) \neq \left\{ 0\right\} $,
the orthogonal projection $\bigtriangleup _{Q;1}^{\mu }$ is one-dimensional,
and hence is given by $\bigtriangleup _{Q;1}^{\mu }f=\frac{\left\langle
f,h_{Q}^{\mu }\right\rangle }{\left\langle h_{Q}^{\mu },h_{Q}^{\mu
}\right\rangle }h_{Q}^{\mu }$ for a function $h_{Q}^{\mu }$, which has the
especially simple formula given by (\ref{simple formula}). We next
investigate to what extent one can find "nice" explicit bases of $%
L_{Q;k}^{2}\left( \mu \right) $ in dimension $n=1$, where in this case $%
0\leq d_{Q;k}\leq \left( 2^{1}-1\right) \left( 
\begin{array}{c}
1+k-1 \\ 
k-1%
\end{array}%
\right) =k$.

\subsection{Proof of Theorem \protect\ref{dim one}}

To prepare for the proof of Theorem \ref{dim one}, we first turn to the\
explicit construction of weighted Alpert bases in dimension $n=1$ when the
number of vanishing moments is $k\geq 2$.

\subsubsection{An explicit basis for $n=1$ and $k\geq 2$}

Let $\mu $ be a locally finite positive Borel measure on $\mathbb{R}$ that
satisfies the Alpert nondegeneracy condition%
\begin{equation}
\boldsymbol{M}_{J,k}\equiv \left[ 
\begin{array}{ccccc}
\left\vert J^{\left( 0\right) }\right\vert _{\mu } & \left\vert J^{\left(
1\right) }\right\vert _{\mu } & \cdots & \left\vert J^{\left( k-2\right)
}\right\vert _{\mu } & \left\vert J^{\left( k-1\right) }\right\vert _{\mu }
\\ 
\left\vert J^{\left( 1\right) }\right\vert _{\mu } & \left\vert J^{\left(
2\right) }\right\vert _{\mu } & \ddots & \left\vert J^{\left( k-1\right)
}\right\vert _{\mu } & \left\vert J^{\left( k\right) }\right\vert _{\mu } \\ 
\vdots & \ddots & \ddots & \ddots & \vdots \\ 
\left\vert J^{\left( k-2\right) }\right\vert _{\mu } & \left\vert J^{\left(
k-1\right) }\right\vert _{\mu } & \ddots & \left\vert J^{\left( 2k-4\right)
}\right\vert _{\mu } & \left\vert J^{\left( 2k-3\right) }\right\vert _{\mu }
\\ 
\left\vert J^{\left( k-1\right) }\right\vert _{\mu } & \left\vert J^{\left(
k\right) }\right\vert _{\mu } & \cdots & \left\vert J^{\left( 2k-3\right)
}\right\vert _{\mu } & \left\vert J^{\left( 2k-2\right) }\right\vert _{\mu }%
\end{array}%
\right] \succ 0,  \label{Alpert nondeg k}
\end{equation}%
for all $J\in \mathcal{D}$, and where we denote a positive definite matrix $%
A $ by $A\succ 0$. For each $x\in J$ we have that%
\begin{equation*}
\boldsymbol{M}\left( x\right) \equiv \left[ 
\begin{array}{ccccc}
1 & x & \cdots & x^{k-2} & x^{k-1} \\ 
x & x^{2} & \ddots & x^{k-3} & x^{k-2} \\ 
\vdots & \ddots & \ddots & \ddots & \vdots \\ 
x^{k-2} & x^{k-3} & \ddots & x^{2k-4} & x^{2k-3} \\ 
x^{k-1} & x^{k-2} & \cdots & x^{2k-3} & x^{2k-2}%
\end{array}%
\right] =\left[ 
\begin{array}{c}
1 \\ 
x \\ 
\vdots \\ 
x^{k-2} \\ 
x^{k-1}%
\end{array}%
\right] \left[ 
\begin{array}{ccccc}
1 & x & \cdots & x^{k-2} & x^{k-1}%
\end{array}%
\right]
\end{equation*}%
is a dyad, namely a rank one nonnegative semidefinite matrix. Thus%
\begin{equation*}
\boldsymbol{M}_{J,k}=\int_{J}\left[ 
\begin{array}{ccccc}
1 & x & \cdots & x^{k-2} & x^{k-1} \\ 
x & x^{2} & \ddots & x^{k-3} & x^{k-2} \\ 
\vdots & \ddots & \ddots & \ddots & \vdots \\ 
x^{k-2} & x^{k-3} & \ddots & x^{2k-4} & x^{2k-3} \\ 
x^{k-1} & x^{k-2} & \cdots & x^{2k-3} & x^{2k-2}%
\end{array}%
\right] d\mu \left( x\right) =\int_{J}\boldsymbol{M}\left( x\right) d\mu
\left( x\right)
\end{equation*}%
is also a nonnegative semidefinite matrix.

We now claim that the matrix $\boldsymbol{M}_{J,k}$ is positive definite 
\emph{if and only if} the functions%
\begin{equation*}
\left\{ \mathbf{1}_{J}\left( x\right) ,x\mathbf{1}_{J}\left( x\right) ,\
...,\ x^{k-1}\mathbf{1}_{J}\left( x\right) \right\}
\end{equation*}%
are linearly independent in $L^{2}\left( \mu \right) $. Indeed, in the
special case $k=2$ we have%
\begin{equation*}
\det \boldsymbol{M}_{J,2}=\det \left[ 
\begin{array}{cc}
\left\vert J^{\left( 0\right) }\right\vert _{\mu } & \left\vert J^{\left(
1\right) }\right\vert _{\mu } \\ 
\left\vert J^{\left( 1\right) }\right\vert _{\mu } & \left\vert J^{\left(
2\right) }\right\vert _{\mu }%
\end{array}%
\right] >0
\end{equation*}%
if and only if%
\begin{equation*}
\left\vert J^{\left( 1\right) }\right\vert _{\mu }^{2}=\left( \int_{J}xd\mu
\left( x\right) \right) ^{2}\leq \left( \int_{J}1^{2}d\mu \left( x\right)
\right) \left( \int_{J}x^{2}d\mu \left( x\right) \right) =\left\vert
J^{\left( 0\right) }\right\vert _{\mu }\left\vert J^{\left( 2\right)
}\right\vert _{\mu }\ ,
\end{equation*}%
with strict inequality if and only if the functions $\mathbf{1}_{J}\left(
x\right) $ and $x\mathbf{1}_{J}\left( x\right) $ are linearly independent in 
$L^{2}\left( \mu \right) $. Here is the general case.

\begin{theorem}
Let $\mu $ be a locally finite positive Borel measure on $\mathbb{R}$. Then:

\begin{enumerate}
\item the $k\times k$ matrix of moments%
\begin{equation*}
\boldsymbol{M}_{J,k}=\int \left[ 
\begin{array}{ccccc}
1 & x & \cdots & x^{k-2} & x^{k-1} \\ 
x & x^{2} & \ddots & x^{k-3} & x^{k-2} \\ 
\vdots & \ddots & \ddots & \ddots & \vdots \\ 
x^{k-2} & x^{k-3} & \ddots & x^{2k-4} & x^{2k-3} \\ 
x^{k-1} & x^{k-2} & \cdots & x^{2k-3} & x^{2k-2}%
\end{array}%
\right] d\mu \left( x\right)
\end{equation*}%
is nonnegative semidefinite, and

\item $\boldsymbol{M}_{J,k}$ has rank $\ell $ if and only if the span of the
functions $1,x,x^{2},..,x^{k-1}$ has dimension $\ell $ in $L^{2}\left( \mu
\right) $, i.e.%
\begin{equation*}
\limfunc{rank}\boldsymbol{M}_{J,k}=\dim \limfunc{Span}\left\{ x^{j}\right\}
_{j=0}^{k-1}\ .
\end{equation*}
\end{enumerate}
\end{theorem}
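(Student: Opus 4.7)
The plan is to recognize $\boldsymbol{M}_{J,k}$ as the Gram matrix, with respect to the $L^{2}(\mu)$ inner product, of the $k$ functions $f_{j}(x)\equiv x^{j}\mathbf{1}_{J}(x)$ for $j=0,1,\dots,k-1$. Indeed, the $(i,j)$ entry is $\int_{J}x^{i+j}\,d\mu(x)=\langle f_{i},f_{j}\rangle_{L^{2}(\mu)}$. Once this identification is made, both conclusions follow from standard linear-algebra facts about Gram matrices.

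For part (1), I would test $\boldsymbol{M}_{J,k}$ against an arbitrary column vector $v=(v_{0},\dots,v_{k-1})^{T}\in\mathbb{R}^{k}$ and observe that
\[
v^{T}\boldsymbol{M}_{J,k}v=\sum_{i,j=0}^{k-1}v_{i}v_{j}\int_{J}x^{i+j}\,d\mu(x)=\int_{J}\Bigl(\sum_{j=0}^{k-1}v_{j}x^{j}\Bigr)^{2}d\mu(x)\ge 0,
\]
which establishes nonnegative semidefiniteness.

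For part (2), the same identity shows $v^{T}\boldsymbol{M}_{J,k}v=0$ if and only if the polynomial $p(x)=\sum_{j=0}^{k-1}v_{j}x^{j}$ vanishes $\mu$-a.e.\ on $J$, equivalently $\sum v_{j}f_{j}=0$ in $L^{2}(\mu)$. Because $\boldsymbol{M}_{J,k}$ is symmetric and positive semidefinite, the set $\{v:v^{T}\boldsymbol{M}_{J,k}v=0\}$ coincides with $\ker\boldsymbol{M}_{J,k}$ (a quick consequence of Cauchy--Schwarz applied to the seminorm $v\mapsto\sqrt{v^{T}\boldsymbol{M}_{J,k}v}$). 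Hence $\ker\boldsymbol{M}_{J,k}$ is naturally isomorphic to the space of linear relations among the $f_{j}$ in $L^{2}(\mu)$, and rank--nullity gives
\[
\operatorname{rank}\boldsymbol{M}_{J,k}=k-\dim\ker\boldsymbol{M}_{J,k}=\dim\operatorname{Span}\{x^{j}\mathbf{1}_{J}\}_{j=0}^{k-1}=\dim\operatorname{Span}\{x^{j}\}_{j=0}^{k-1},
\]
where in the last equality the span is understood inside $L^{2}(\mathbf{1}_{J}\mu)$.

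There is essentially no obstacle once the Gram-matrix viewpoint is adopted; the only point to handle carefully is the implication $v^{T}\boldsymbol{M}_{J,k}v=0\Rightarrow \boldsymbol{M}_{J,k}v=0$ for PSD matrices noted above. The $k=2$ case discussed just before the theorem is simply the specialization of this argument where the Cauchy--Schwarz inequality for the two functions $1$ and $x$ is written out explicitly.
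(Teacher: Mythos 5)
Your proof is correct and follows essentially the same route as the paper: both identify the quadratic form $v^{\mathrm{tr}}\boldsymbol{M}_{J,k}v=\int_J\bigl(\sum_j v_j x^j\bigr)^2 d\mu$ (the paper phrases this by writing the integrand as the dyad $V_k(x)V_k(x)^{\mathrm{tr}}$), and both deduce the rank statement from the correspondence between the null space of this form and the linear relations among $1,x,\dots,x^{k-1}$ in $L^2(\mu)$. Your explicit use of the PSD fact that $v^{\mathrm{tr}}Mv=0$ implies $Mv=0$, together with rank--nullity, is a slightly cleaner rendering of the kernel-dimension step than the paper's own wording, but it is the same argument.
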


\begin{proof}
With $V_{k}\left( x\right) ^{\limfunc{tr}}\equiv \left[ 
\begin{array}{ccccc}
1 & x & \cdots & x^{k-2} & x^{k-1}%
\end{array}%
\right] $, we compute%
\begin{equation*}
\left[ 
\begin{array}{ccccc}
1 & x & \cdots & x^{k-2} & x^{k-1} \\ 
x & x^{2} & \ddots & x^{k-3} & x^{k-2} \\ 
\vdots & \ddots & \ddots & \ddots & \vdots \\ 
x^{k-2} & x^{k-3} & \ddots & x^{2k-4} & x^{2k-3} \\ 
x^{k-1} & x^{k-2} & \cdots & x^{2k-3} & x^{2k-2}%
\end{array}%
\right] =\left[ 
\begin{array}{c}
1 \\ 
x \\ 
\vdots \\ 
x^{k-2} \\ 
x^{k-1}%
\end{array}%
\right] \left[ 
\begin{array}{ccccc}
1 & x & \cdots & x^{k-2} & x^{k-1}%
\end{array}%
\right] =V_{k}\left( x\right) V_{k}\left( x\right) ^{\limfunc{tr}},
\end{equation*}%
and so%
\begin{equation*}
\boldsymbol{M}_{J,k}\equiv \int_{\mathbb{R}^{k}}V_{k}\left( x\right)
V_{k}\left( x\right) ^{\limfunc{tr}}d\mu \left( x\right)
\end{equation*}%
is a positive integral of dyads, hence nonnegative semidefinite. Now note
that the quadratic form%
\begin{equation*}
\xi ^{\limfunc{tr}}\boldsymbol{M}_{J,k}\xi =\int_{\mathbb{R}^{k}}\left( \xi
^{\limfunc{tr}}V_{k}\left( x\right) \right) \left( V_{k}\left( x\right) ^{%
\limfunc{tr}}\xi \right) d\mu \left( x\right) =\int_{\mathbb{R}^{k}}\left(
\xi \cdot V_{k}\left( x\right) \right) ^{2}d\mu \left( x\right)
\end{equation*}%
is strictly positive if and only if $\limfunc{Span}\left\{ V_{k}\left(
x\right) :x\in \limfunc{supp}\ \mu \right\} $ has dimension $k$. Indeed, $%
\dim \limfunc{Span}\left\{ V_{k}\left( x\right) :x\in \limfunc{supp}\ \mu
\right\} =k$ if and only if the function $\xi \cdot V_{k}\left( x\right) $
is not trivial for every $\xi \in \mathbb{R}^{k}\setminus \left\{ 0\right\} $%
, and this in turn happens if and only if $\xi ^{\limfunc{tr}}\boldsymbol{M}%
_{J,k}\xi >0$ for all $\xi \in \mathbb{R}^{k}\setminus \left\{ 0\right\} $.
Similarly we have $\dim \limfunc{Span}\left\{ V_{k}\left( x\right) :x\in 
\limfunc{supp}\ \mu \right\} =\ell $ if and only if the vector subspace of $%
\mathbb{R}^{k}$ defined by 
\begin{equation*}
S\equiv \left\{ \xi \in \mathbb{R}^{k}:\xi \cdot V_{k}\left( x\right) \text{
is not trivial}\right\}
\end{equation*}%
has dimension $\ell $, and this in turn happens if and only if $\dim 
\limfunc{Span}\left\{ x^{j}\right\} _{j=0}^{k-1}=\ell $.
\end{proof}

\begin{remark}
The van der Monde determinant formula shows that $\dim \limfunc{Span}\left\{
x^{j}\right\} _{j=0}^{k-1}=\ell <k$ if and only if the measure $\mu $ is a
sum of positive multiples of $\ell $ point masses, i.e. $\mu
=\sum_{j=1}^{\ell }c_{j}\delta _{x_{j}}$ with $c_{j}>0$ and the $x_{j}$
distinct.
\end{remark}

\subsubsection{An orthonormal basis of the multiresolution projections}

Let $\mu $ be a locally finite positive Borel measure on $\mathbb{R}$ that
satisfies the Alpert nondegeneracy condition (\ref{Alpert nondeg k}), i.e.
the determinant of all principal $\ell \times \ell $ submatrices $%
\boldsymbol{M}_{J,\mathbf{i}}$ is positive for $\mathbf{i}=\left(
i_{1},i_{2},...,i_{\ell }\right) $ and $1\leq \ell \leq k$, 
\begin{eqnarray*}
&&\det \boldsymbol{M}_{J,\mathbf{i}}>0,\ \ \ \ \ \text{for all }1\leq \ell
\leq k\text{, and all }J=I_{j}^{m},\ m\in \mathbb{Z}_{+},\ 0\leq j\leq
2^{m}-1; \\
&&\text{where }\boldsymbol{M}_{J,\mathbf{i}}\equiv \left[ 
\begin{array}{ccccc}
\left\vert J^{\left( 2i_{1}\right) }\right\vert _{\mu } & \left\vert
J^{\left( i_{1}+i_{2}\right) }\right\vert _{\mu } & \cdots & \left\vert
J^{\left( i_{1}+i_{\ell -1}\right) }\right\vert _{\mu } & \left\vert
J^{\left( i_{1}+i_{\ell }\right) }\right\vert _{\mu } \\ 
\left\vert J^{\left( i_{2}+i_{1}\right) }\right\vert _{\mu } & \left\vert
J^{\left( 2i_{2}\right) }\right\vert _{\mu } & \ddots & \left\vert J^{\left(
i_{2}+i_{\ell -1}\right) }\right\vert _{\mu } & \left\vert J^{\left(
i_{2}+i_{\ell }\right) }\right\vert _{\mu } \\ 
\vdots & \ddots & \ddots & \ddots & \vdots \\ 
\left\vert J^{\left( i_{\ell -1}+i_{1}\right) }\right\vert _{\mu } & 
\left\vert J^{\left( i_{\ell -1}+i_{2}\right) }\right\vert _{\mu } & \ddots
& \left\vert J^{\left( 2i_{\ell -1}\right) }\right\vert _{\mu } & \left\vert
J^{\left( i_{\ell -1}+i_{\ell }\right) }\right\vert _{\mu } \\ 
\left\vert J^{\left( i_{\ell }+i_{1}\right) }\right\vert _{\mu } & 
\left\vert J^{\left( i_{\ell }+i_{2}\right) }\right\vert _{\mu } & \cdots & 
\left\vert J^{\left( i_{\ell }+i_{\ell -1}\right) }\right\vert _{\mu } & 
\left\vert J^{\left( 2i_{\ell }\right) }\right\vert _{\mu }%
\end{array}%
\right] .
\end{eqnarray*}%
If for $\mathbf{i}=\left( i_{1},i_{2},...,i_{\ell }\right) $ we set $V_{%
\mathbf{i}}\left( x\right) ^{\limfunc{tr}}\equiv \left[ 
\begin{array}{ccccc}
x^{i_{1}} & x^{i_{2}} & \cdots & x^{i_{\ell -1}} & x^{i_{\ell }}%
\end{array}%
\right] $, then by the above theorem, 
\begin{equation*}
\boldsymbol{M}_{J,\mathbf{i}}\equiv \int_{J}V_{\mathbf{i}}\left( x\right) V_{%
\mathbf{i}}\left( x\right) ^{\limfunc{tr}}d\mu \left( x\right)
\end{equation*}%
is nonnegative semidefinite; and is positive definite if and only if the the
span of the functions $x^{i_{1}},x^{i_{2}},...,x^{i_{\ell }}$ has dimension $%
\ell $ in $L^{2}\left( \mu \right) $, i.e.%
\begin{equation*}
\limfunc{rank}\boldsymbol{M}_{J,\mathbf{i}}=\dim \limfunc{Span}\left\{
x^{i_{j}}\right\} _{j=1}^{\ell }\ .
\end{equation*}

\begin{remark}
Recall that a matrix $M\,$is nonnegative semidefinite if and only if $\det 
\boldsymbol{M}_{\mathbf{i}}\geq 0$ for all principal submatrices $%
\boldsymbol{M}_{\mathbf{i}}$ (the numbers $\det \boldsymbol{M}_{\mathbf{i}}$
are usually referred to as principal minors); and that a matrix $M\,$is
positive definite if and only if $\det \boldsymbol{M}_{\mathbf{i}}>0$ for
the special subset of leading principal submatrices $\boldsymbol{M}_{\mathbf{%
i}}$ having $\mathbf{i}=\left( 1,2,...,\ell \right) $ and $1\leq \ell \leq k$
(in which case $\det \boldsymbol{M}_{\mathbf{i}}>0$ for all principal
submatrices $\boldsymbol{M}_{\mathbf{i}}$).
\end{remark}

Recall that we can use the Gram-Schmidt orthogonalization algorithm to find
an orthonormal basis $\left\{ h_{Q}^{\mu ,a}\right\} _{a\in \Gamma _{1}^{k}}$
of $L_{Q;k}^{2}\left( \mu \right) $ where $\Gamma _{1}^{k}$ is any
convenient index set with cardinality equal to $d_{Q;k}$, the dimension of $%
L_{Q}^{2,1}\left( \mu \right) $, i.e. 
\begin{equation*}
\#\Gamma _{1}^{k}=\dim L_{Q;k}^{2}\left( \mu \right) .
\end{equation*}%
Then $\left\{ x^{\alpha }\right\} _{\alpha \in \mathcal{F}_{\infty }}\cup
\left\{ h_{Q}^{\mu ,a}\right\} _{a\in \Gamma _{n}^{k}\text{ and }Q\in 
\mathcal{D}}$ is an orthonormal basis for $L_{\mathbb{R}^{n}}^{2}\left( \mu
\right) $

In the case of dimension $n=1$ with $k\geq 2$ vanishing moments, the above
proposition shows that the dimension $d_{Q;k}$ of the subspace $%
L_{Q;k}^{2}\left( \mu \right) $ satisfies $0\leq d_{Q;k}\leq \left(
2^{1}-1\right) \left( 
\begin{array}{c}
1+k-1 \\ 
k-1%
\end{array}%
\right) =k$, and in the case where $\boldsymbol{M}_{J,k}$ is positive
definite, we have $d_{Q;k}=k$. Now we begin an explicit construction of an
`Alpert' orthonormal basis.

We require the Alpert functions%
\begin{eqnarray*}
a_{I}^{\mu ,\ell }\left( x\right) &\equiv &\left( \alpha _{k-1}^{\ell
}x^{k-1}+\alpha _{k-2}^{\ell }x^{k-2}+...+\alpha _{1}^{\ell }x+\alpha
_{0}^{\ell }\right) \mathbf{1}_{I_{\limfunc{left}}}\left( x\right) \\
&&+\left( \beta _{k-1}^{\ell }x^{k-1}+\beta _{k-2}^{\ell }x^{k-2}+...+\beta
_{1}^{\ell }x+\beta _{0}^{\ell }\right) \mathbf{1}_{I_{\limfunc{right}%
}}\left( x\right) ,
\end{eqnarray*}%
namely%
\begin{eqnarray*}
a_{I}^{\mu ,1}\left( x\right) &\equiv &\left( \alpha
_{k-1}^{1}x^{k-1}+\alpha _{k-2}^{1}x^{k-2}+...+\alpha _{1}^{1}x+\alpha
_{0}^{1}\right) \mathbf{1}_{I_{\limfunc{left}}}\left( x\right) \\
&&+\left( \beta _{k-1}^{1}x^{k-1}+\beta _{k-2}^{1}x^{k-2}+...+\beta
_{1}^{1}x+\beta _{0}^{1}\right) \mathbf{1}_{I_{\limfunc{right}}}\left(
x\right) , \\
a_{I}^{\mu ,2}\left( x\right) &\equiv &\left( \alpha
_{k-1}^{2}x^{k-1}+\alpha _{k-2}^{2}x^{k-2}+...+\alpha _{1}^{2}x+\alpha
_{0}^{2}\right) \mathbf{1}_{I_{\limfunc{left}}}\left( x\right) \\
&&+\left( \beta _{k-1}^{2}x^{k-1}+\beta _{k-2}^{2}x^{k-2}+...+\beta
_{1}^{2}x+\beta _{0}^{2}\right) \mathbf{1}_{I_{\limfunc{right}}}\left(
x\right) , \\
&&\vdots \\
a_{I}^{\mu ,k}\left( x\right) &\equiv &\left( \alpha
_{k-1}^{k}x^{k-1}+\alpha _{k-2}^{k}x^{k-2}+...+\alpha _{1}^{k}x+\alpha
_{0}^{k}\right) \mathbf{1}_{I_{\limfunc{left}}}\left( x\right) \\
&&+\left( \beta _{k-1}^{k}x^{k-1}+\beta _{k-2}^{k}x^{k-2}+...+\beta
_{1}^{k}x+\beta _{0}^{k}\right) \mathbf{1}_{I_{\limfunc{right}}}\left(
x\right) ,
\end{eqnarray*}%
to satisfy the $k^{2}$ moment properties,%
\begin{equation*}
\int a_{I}^{\mu ,\ell }\left( x\right) x^{i}d\mu \left( x\right) =0,\ \ \ \
\ \text{for all }1\leq \ell \leq k\text{ and }0\leq i\leq k+\ell -2,
\end{equation*}%
the $\left( 
\begin{array}{c}
k \\ 
2%
\end{array}%
\right) $ orthogonality properties,%
\begin{equation*}
\int a_{I}^{\mu ,\ell }\left( x\right) a_{I}^{\mu ,\ell ^{\prime }}\left(
x\right) d\mu \left( x\right) =0,\ \ \ \ \ \text{for all }1\leq \ell ,\ell
^{\prime }\leq k,
\end{equation*}%
and the $k$ normalization properties, 
\begin{equation*}
\left( \int \left\vert a_{I}^{\mu ,\ell }\left( x\right) \right\vert
^{2}d\mu \left( x\right) \right) ^{\frac{1}{2}}=1,\ \ \ \ \ \text{for all }%
1\leq \ell \leq k.
\end{equation*}

\begin{description}
\item[Dimension count] Note that $k^{2}+\left( 
\begin{array}{c}
k \\ 
2%
\end{array}%
\right) +k=2k^{2}-\left( 
\begin{array}{c}
k \\ 
2%
\end{array}%
\right) $, so that there are $\left( 
\begin{array}{c}
k \\ 
2%
\end{array}%
\right) $ degrees of freedom remaining in the choice of the $2k^{2}$
coefficients $\left\{ \alpha _{i}^{\ell },\beta _{i}^{\ell }\right\} _{1\leq
\ell \leq k\text{ and }0\leq i\leq k-1}$. We will later show that we can
impose $\left( 
\begin{array}{c}
k \\ 
2%
\end{array}%
\right) $ \emph{additional} moment conditions.
\end{description}

So with the definition $\left\vert J^{\left( i\right) }\right\vert _{\mu
}\equiv \int_{J}x^{i}d\mu \left( x\right) $, we first tackle the moment
properties:%
\begin{eqnarray*}
0 &=&\int a_{I}^{\mu ,\ell }\left( x\right) x^{i}d\mu \left( x\right)
=\int_{I_{\limfunc{left}}}\left( \alpha _{k-1}^{\ell }x^{k-1}+\alpha
_{k-2}^{\ell }x^{k-2}+...+\alpha _{1}^{\ell }x+\alpha _{0}^{\ell }\right)
x^{i}d\mu \left( x\right) \\
&&+\int_{I_{\limfunc{right}}}\left( \beta _{k-1}^{\ell }x^{k-1}+\beta
_{k-2}^{\ell }x^{k-2}+...+\beta _{1}^{\ell }x+\beta _{0}^{\ell }\right)
x^{i}d\mu \left( x\right) \\
&=&\alpha _{k-1}^{\ell }\left\vert I_{\limfunc{left}}^{\left( i+k-1\right)
}\right\vert _{\mu }+\alpha _{k-2}^{\ell }\left\vert I_{\limfunc{left}%
}^{\left( i+k-2\right) }\right\vert _{\mu }+...+\alpha _{0}^{\ell
}\left\vert I_{\limfunc{left}}^{\left( i\right) }\right\vert _{\mu } \\
&&+\beta _{k-1}^{\ell }\left\vert I_{\limfunc{right}}^{\left( i+k-1\right)
}\right\vert _{\mu }+\beta _{k-2}^{\ell }\left\vert I_{\limfunc{right}%
}^{\left( i+k-2\right) }\right\vert _{\mu }+...+\beta _{0}^{\ell }\left\vert
I_{\limfunc{right}}^{\left( i\right) }\right\vert _{\mu },
\end{eqnarray*}%
which lead to%
\begin{equation*}
\left[ 
\begin{array}{ccc}
\left\vert I_{\limfunc{left}}^{\left( i\right) }\right\vert _{\mu } & \cdots
& \left\vert I_{\limfunc{left}}^{\left( i+k-1\right) }\right\vert _{\mu }%
\end{array}%
\right] \left( 
\begin{array}{c}
\alpha _{0}^{\ell } \\ 
\vdots \\ 
\alpha _{k-1}^{\ell }%
\end{array}%
\right) +\left[ 
\begin{array}{ccc}
\left\vert I_{\limfunc{right}}^{\left( i\right) }\right\vert _{\mu } & \cdots
& \left\vert I_{\limfunc{right}}^{\left( i+k-1\right) }\right\vert _{\mu }%
\end{array}%
\right] \left( 
\begin{array}{c}
\beta _{0}^{\ell } \\ 
\vdots \\ 
\beta _{k-1}^{\ell }%
\end{array}%
\right) =0,\ \ \ \ \ 0\leq i\leq k-1,
\end{equation*}%
which in matrix form is,%
\begin{eqnarray*}
\left( 
\begin{array}{c}
0 \\ 
0 \\ 
\vdots \\ 
0%
\end{array}%
\right) &=&\left[ 
\begin{array}{ccccc}
\left\vert I_{\limfunc{left}}^{\left( 0\right) }\right\vert _{\mu } & 
\left\vert I_{\limfunc{left}}^{\left( 1\right) }\right\vert _{\mu } & \cdots
& \left\vert I_{\limfunc{left}}^{\left( k-2\right) }\right\vert _{\mu } & 
\left\vert I_{\limfunc{left}}^{\left( k-1\right) }\right\vert _{\mu } \\ 
\left\vert I_{\limfunc{left}}^{\left( 1\right) }\right\vert _{\mu } & 
\left\vert I_{\limfunc{left}}^{\left( 2\right) }\right\vert _{\mu } & \cdots
& \left\vert I_{\limfunc{left}}^{\left( k-1\right) }\right\vert _{\mu } & 
\left\vert I_{\limfunc{left}}^{\left( k\right) }\right\vert _{\mu } \\ 
\vdots & \vdots &  &  & \vdots \\ 
\left\vert I_{\limfunc{left}}^{\left( k-2\right) }\right\vert _{\mu } & 
\left\vert I_{\limfunc{left}}^{\left( k-1\right) }\right\vert _{\mu } & 
\cdots & \left\vert I_{\limfunc{left}}^{\left( 2k-4\right) }\right\vert
_{\mu } & \left\vert I_{\limfunc{left}}^{\left( 2k-3\right) }\right\vert
_{\mu } \\ 
\left\vert I_{\limfunc{left}}^{\left( k-1\right) }\right\vert _{\mu } & 
\left\vert I_{\limfunc{left}}^{\left( k\right) }\right\vert _{\mu } & \cdots
& \left\vert I_{\limfunc{left}}^{\left( 2k-3\right) }\right\vert _{\mu } & 
\left\vert I_{\limfunc{left}}^{\left( 2k-2\right) }\right\vert _{\mu }%
\end{array}%
\right] \left( 
\begin{array}{c}
\alpha _{0}^{\ell } \\ 
\alpha _{1}^{\ell } \\ 
\vdots \\ 
\alpha _{k-2}^{\ell } \\ 
\alpha _{k-1}^{\ell }%
\end{array}%
\right) \\
&&+\left[ 
\begin{array}{ccccc}
\left\vert I_{\limfunc{right}}^{\left( 0\right) }\right\vert _{\mu } & 
\left\vert I_{\limfunc{right}}^{\left( 1\right) }\right\vert _{\mu } & \cdots
& \left\vert I_{\limfunc{right}}^{\left( k-2\right) }\right\vert _{\mu } & 
\left\vert I_{\limfunc{right}}^{\left( k-1\right) }\right\vert _{\mu } \\ 
\left\vert I_{\limfunc{right}}^{\left( 1\right) }\right\vert _{\mu } & 
\left\vert I_{\limfunc{right}}^{\left( 2\right) }\right\vert _{\mu } & \cdots
& \left\vert I_{\limfunc{right}}^{\left( k-1\right) }\right\vert _{\mu } & 
\left\vert I_{\limfunc{right}}^{\left( k\right) }\right\vert _{\mu } \\ 
\vdots & \vdots &  &  & \vdots \\ 
\left\vert I_{\limfunc{right}}^{\left( k-2\right) }\right\vert _{\mu } & 
\left\vert I_{\limfunc{right}}^{\left( k-1\right) }\right\vert _{\mu } & 
\cdots & \left\vert I_{\limfunc{right}}^{\left( 2k-4\right) }\right\vert
_{\mu } & \left\vert I_{\limfunc{right}}^{\left( 2k-3\right) }\right\vert
_{\mu } \\ 
\left\vert I_{\limfunc{right}}^{\left( k-1\right) }\right\vert _{\mu } & 
\left\vert I_{\limfunc{right}}^{\left( k\right) }\right\vert _{\mu } & \cdots
& \left\vert I_{\limfunc{right}}^{\left( 2k-3\right) }\right\vert _{\mu } & 
\left\vert I_{\limfunc{right}}^{\left( 2k-2\right) }\right\vert _{\mu }%
\end{array}%
\right] \left( 
\begin{array}{c}
\beta _{0}^{\ell } \\ 
\beta _{1}^{\ell } \\ 
\vdots \\ 
\beta _{k-2}^{\ell } \\ 
\beta _{k-1}^{\ell }%
\end{array}%
\right) .
\end{eqnarray*}%
But this says that%
\begin{equation}
\left[ 
\begin{array}{ccc}
\left\vert I_{\limfunc{left}}^{\left( 0\right) }\right\vert _{\mu } & \cdots
& \left\vert I_{\limfunc{left}}^{\left( k-1\right) }\right\vert _{\mu } \\ 
\vdots &  & \vdots \\ 
\left\vert I_{\limfunc{left}}^{\left( k-1\right) }\right\vert _{\mu } & 
\cdots & \left\vert I_{\limfunc{left}}^{\left( 2k-2\right) }\right\vert
_{\mu }%
\end{array}%
\right] \left( 
\begin{array}{c}
\alpha _{0}^{\ell } \\ 
\vdots \\ 
\alpha _{k-1}^{\ell }%
\end{array}%
\right) =-\left[ 
\begin{array}{ccc}
\left\vert I_{\limfunc{right}}^{\left( 0\right) }\right\vert _{\mu } & \cdots
& \left\vert I_{\limfunc{right}}^{\left( k-1\right) }\right\vert _{\mu } \\ 
\vdots &  & \vdots \\ 
\left\vert I_{\limfunc{right}}^{\left( k-1\right) }\right\vert _{\mu } & 
\cdots & \left\vert I_{\limfunc{right}}^{\left( 2k-2\right) }\right\vert
_{\mu }%
\end{array}%
\right] \left( 
\begin{array}{c}
\beta _{0}^{\ell } \\ 
\vdots \\ 
\beta _{k-1}^{\ell }%
\end{array}%
\right) ,  \label{Alpert moment k}
\end{equation}%
for $1\leq \ell \leq k$, which we write as%
\begin{equation*}
\boldsymbol{L}_{I}^{k}\mathbf{\alpha }_{k}^{\ell }=-\boldsymbol{R}_{I}^{k}%
\mathbf{\beta }_{k}^{\ell }\ ,
\end{equation*}%
with%
\begin{eqnarray*}
\boldsymbol{L}_{I}^{k} &\equiv &\left[ 
\begin{array}{ccccc}
\left\vert I_{\limfunc{left}}^{\left( 0\right) }\right\vert _{\mu } & 
\left\vert I_{\limfunc{left}}^{\left( 1\right) }\right\vert _{\mu } & \cdots
& \left\vert I_{\limfunc{left}}^{\left( k-2\right) }\right\vert _{\mu } & 
\left\vert I_{\limfunc{left}}^{\left( k-1\right) }\right\vert _{\mu } \\ 
\left\vert I_{\limfunc{left}}^{\left( 1\right) }\right\vert _{\mu } & 
\left\vert I_{\limfunc{left}}^{\left( 2\right) }\right\vert _{\mu } & \cdots
& \left\vert I_{\limfunc{left}}^{\left( k-1\right) }\right\vert _{\mu } & 
\left\vert I_{\limfunc{left}}^{\left( k\right) }\right\vert _{\mu } \\ 
\vdots & \vdots &  &  & \vdots \\ 
\left\vert I_{\limfunc{left}}^{\left( k-2\right) }\right\vert _{\mu } & 
\left\vert I_{\limfunc{left}}^{\left( k-1\right) }\right\vert _{\mu } & 
\cdots & \left\vert I_{\limfunc{left}}^{\left( 2k-4\right) }\right\vert
_{\mu } & \left\vert I_{\limfunc{left}}^{\left( 2k-3\right) }\right\vert
_{\mu } \\ 
\left\vert I_{\limfunc{left}}^{\left( k-1\right) }\right\vert _{\mu } & 
\left\vert I_{\limfunc{left}}^{\left( k\right) }\right\vert _{\mu } & \cdots
& \left\vert I_{\limfunc{left}}^{\left( 2k-3\right) }\right\vert _{\mu } & 
\left\vert I_{\limfunc{left}}^{\left( 2k-2\right) }\right\vert _{\mu }%
\end{array}%
\right] , \\
\boldsymbol{R}_{I}^{k} &\equiv &\left[ 
\begin{array}{ccccc}
\left\vert I_{\limfunc{right}}^{\left( 0\right) }\right\vert _{\mu } & 
\left\vert I_{\limfunc{right}}^{\left( 1\right) }\right\vert _{\mu } & \cdots
& \left\vert I_{\limfunc{right}}^{\left( k-2\right) }\right\vert _{\mu } & 
\left\vert I_{\limfunc{right}}^{\left( k-1\right) }\right\vert _{\mu } \\ 
\left\vert I_{\limfunc{right}}^{\left( 1\right) }\right\vert _{\mu } & 
\left\vert I_{\limfunc{right}}^{\left( 2\right) }\right\vert _{\mu } & \cdots
& \left\vert I_{\limfunc{right}}^{\left( k-1\right) }\right\vert _{\mu } & 
\left\vert I_{\limfunc{right}}^{\left( k\right) }\right\vert _{\mu } \\ 
\vdots & \vdots &  &  & \vdots \\ 
\left\vert I_{\limfunc{right}}^{\left( k-2\right) }\right\vert _{\mu } & 
\left\vert I_{\limfunc{right}}^{\left( k-1\right) }\right\vert _{\mu } & 
\cdots & \left\vert I_{\limfunc{right}}^{\left( 2k-4\right) }\right\vert
_{\mu } & \left\vert I_{\limfunc{right}}^{\left( 2k-3\right) }\right\vert
_{\mu } \\ 
\left\vert I_{\limfunc{right}}^{\left( k-1\right) }\right\vert _{\mu } & 
\left\vert I_{\limfunc{right}}^{\left( k\right) }\right\vert _{\mu } & \cdots
& \left\vert I_{\limfunc{right}}^{\left( 2k-3\right) }\right\vert _{\mu } & 
\left\vert I_{\limfunc{right}}^{\left( 2k-2\right) }\right\vert _{\mu }%
\end{array}%
\right] , \\
\mathbf{\alpha }_{k}^{\ell } &\equiv &\left( 
\begin{array}{c}
\alpha _{0}^{\ell } \\ 
\vdots \\ 
\alpha _{k-1}^{\ell }%
\end{array}%
\right) ,\ \ \ \mathbf{\beta }_{k}^{\ell }=\left( 
\begin{array}{c}
\beta _{0}^{\ell } \\ 
\vdots \\ 
\beta _{k-1}^{\ell }%
\end{array}%
\right)
\end{eqnarray*}%
and by our determinant assumption we can then solve for $\mathbf{\alpha }%
_{k}^{\ell }$ in terms of $\mathbf{\beta }_{k}^{\ell }$; 
\begin{equation}
\mathbf{\alpha }_{k}^{\ell }=-\left[ L_{I}^{k}\right] ^{-1}R_{I}^{k}\ 
\mathbf{\beta }_{k}^{\ell }\ ,\ \ \ \ \ 1\leq \ell \leq k\ .  \label{solve}
\end{equation}%
Thus using the Gram-Schmidt orthogonalization algorithm, there is an
orthonormal basis $\left\{ a_{I}^{\mu ,\ell }\left( x\right) \right\} _{\ell
=1}^{k}$ of $L_{Q;k}^{2}\left( \mu \right) $ consisting of Alpert functions
if and only if at least one of the matrices $\boldsymbol{L}_{I}^{k}$ and $%
\boldsymbol{R}_{I}^{k}$ is nonsingular (equivalently positive definite,
since these symmetric matrices are always nonnegative semidefinite). For
example, if we assume that $L_{I}^{k}$ is invertible, then just as in the
case $k=2$ discussed above, a function $a_{I}^{\mu ,\ell ^{\prime }}\left(
x\right) $ is orthogonal to $a_{I}^{\mu ,\ell }\left( x\right) $ in $%
L^{2}\left( \mu \right) $, i.e.%
\begin{eqnarray*}
0 &=&\int a_{I}^{\mu ,\ell }\left( x\right) a_{I}^{\mu ,\ell ^{\prime
}}\left( x\right) d\mu \left( x\right) \\
&=&\int \left\{ \left( \alpha _{k-1}^{\ell }x^{k-1}+\alpha _{k-2}^{\ell
}x^{k-2}+...+\alpha _{1}^{\ell }x+\alpha _{0}^{\ell }\right) \left( \alpha
_{k-1}^{\ell ^{\prime }}x^{k-1}+\alpha _{k-2}^{\ell ^{\prime
}}x^{k-2}+...+\alpha _{1}^{\ell ^{\prime }}x+\alpha _{0}^{\ell ^{\prime
}}\right) \mathbf{1}_{I_{\limfunc{left}}}\left( x\right) \right. \\
&&+\left. \left( \beta _{k-1}^{\ell }x^{k-1}+\beta _{k-2}^{\ell
}x^{k-2}+...+\beta _{1}^{\ell }x+\beta _{0}^{\ell }\right) \left( \beta
_{k-1}^{\ell ^{\prime }}x^{k-1}+\beta _{k-2}^{\ell ^{\prime
}}x^{k-2}+...+\beta _{1}^{\ell ^{\prime }}x+\beta _{0}^{\ell ^{\prime
}}\right) \mathbf{1}_{I_{\limfunc{right}}}\left( x\right) \right\} d\mu
\left( x\right) \\
&=&\left( \mathbf{\alpha }_{k}^{\ell }\right) ^{\limfunc{tr}}\boldsymbol{L}%
_{I}\mathbf{\alpha }_{k}^{\ell ^{\prime }}+\left( \mathbf{\beta }_{k}^{\ell
}\right) ^{\limfunc{tr}}\boldsymbol{R}_{I}\mathbf{\beta }_{k}^{\ell ^{\prime
}} \\
&=&\left( \mathbf{\beta }_{k}^{\ell }\right) ^{\limfunc{tr}}\left[ 
\boldsymbol{L}_{I}^{-1}\boldsymbol{R}_{I}\right] ^{\limfunc{tr}}\boldsymbol{L%
}_{I}\left[ \boldsymbol{L}_{I}^{-1}\boldsymbol{R}_{I}\right] \mathbf{\beta }%
_{k}^{\ell ^{\prime }}+\left( \mathbf{\beta }_{k}^{\ell }\right) ^{\limfunc{%
tr}}\boldsymbol{R}_{I}\mathbf{\beta }_{k}^{\ell ^{\prime }}
\end{eqnarray*}%
equivalently 
\begin{equation*}
0=\left( \mathbf{\beta }_{k}^{\ell }\right) ^{\limfunc{tr}}\left\{ 
\boldsymbol{R}_{I}\boldsymbol{L}_{I}^{-1}\boldsymbol{L}_{I}\boldsymbol{L}%
_{I}^{-1}\boldsymbol{R}_{I}+\boldsymbol{R}_{I}\right\} \mathbf{\beta }%
_{k}^{\ell ^{\prime }}=\left( \mathbf{\beta }_{k}^{\ell }\right) ^{\limfunc{%
tr}}\boldsymbol{R}_{I}\boldsymbol{L}_{I}^{-1}\left\{ \boldsymbol{R}_{I}+%
\boldsymbol{L}_{I}\right\} \mathbf{\beta }_{k}^{\ell ^{\prime }}=\left( 
\mathbf{\beta }_{k}^{\ell }\right) ^{\limfunc{tr}}\boldsymbol{X}_{I}\mathbf{%
\beta }_{k}^{\ell ^{\prime }},
\end{equation*}%
where $\boldsymbol{X}_{I}\equiv \boldsymbol{L}_{I}\boldsymbol{R}_{I}%
\boldsymbol{L}_{I}^{-1}\left\{ \boldsymbol{R}_{I}+\boldsymbol{L}_{I}\right\} 
$ is invertible since $\boldsymbol{R}_{I}+\boldsymbol{L}_{I}=\boldsymbol{M}%
_{I,k}\succ 0$ by (\ref{Alpert nondeg}). Thus the orthogonality can be
achieved simply by choosing $\mathbf{\beta }_{k}^{\ell ^{\prime }}$
perpendicular to the vector $\left( \mathbf{\beta }_{k}^{\ell }\right) ^{%
\limfunc{tr}}\boldsymbol{X}_{I}$.

Finally we note that the system of equations that we have solved to obtain
an orthonormal basis of Alpert functions is underdetermined since we have
not yet solved for the \emph{additional} moment conditions, 
\begin{equation*}
\int a_{I}^{\mu ,\ell }\left( x\right) x^{i}d\mu \left( x\right) =0,\ \ \ \
\ \text{for all }2\leq \ell \leq k\text{ and }k\leq i\leq k+\ell -2,
\end{equation*}%
and we now explicitly compute the additional equations under which these
additional moment conditions can also be achieved for the orthonormal basis
of Alpert functions. As above have%
\begin{eqnarray*}
0 &=&\alpha _{k-1}^{\ell }\left\vert I_{\limfunc{left}}^{\left( i+k-1\right)
}\right\vert _{\mu }+\alpha _{k-2}^{\ell }\left\vert I_{\limfunc{left}%
}^{\left( i+k-2\right) }\right\vert _{\mu }+...+\alpha _{0}^{\ell
}\left\vert I_{\limfunc{left}}^{\left( i\right) }\right\vert _{\mu } \\
&&+\beta _{k-1}^{\ell }\left\vert I_{\limfunc{right}}^{\left( i+k-1\right)
}\right\vert _{\mu }+\beta _{k-2}^{\ell }\left\vert I_{\limfunc{right}%
}^{\left( i+k-2\right) }\right\vert _{\mu }+...+\beta _{0}^{\ell }\left\vert
I_{\limfunc{right}}^{\left( i\right) }\right\vert _{\mu },
\end{eqnarray*}%
but now for all $2\leq \ell \leq k$ and $k\leq i\leq k+\ell -2$, which leads
to%
\begin{eqnarray*}
&&\left[ 
\begin{array}{ccc}
\left\vert I_{\limfunc{left}}^{\left( i\right) }\right\vert _{\mu } & \cdots
& \left\vert I_{\limfunc{left}}^{\left( i+k-1\right) }\right\vert _{\mu }%
\end{array}%
\right] \left( 
\begin{array}{c}
\alpha _{0}^{\ell } \\ 
\vdots \\ 
\alpha _{k-1}^{\ell }%
\end{array}%
\right) +\left[ 
\begin{array}{ccc}
\left\vert I_{\limfunc{right}}^{\left( i\right) }\right\vert _{\mu } & \cdots
& \left\vert I_{\limfunc{right}}^{\left( i+k-1\right) }\right\vert _{\mu }%
\end{array}%
\right] \left( 
\begin{array}{c}
\beta _{0}^{\ell } \\ 
\vdots \\ 
\beta _{k-1}^{\ell }%
\end{array}%
\right) =0, \\
&&\text{for all }2\leq \ell \leq k\text{ and }k\leq i\leq k+\ell -2,
\end{eqnarray*}%
and can be written as%
\begin{equation}
\mathbf{v}_{\limfunc{left}}^{\left( i+k-1,i\right) }\cdot \mathbf{\alpha }%
_{k}^{\ell }+\mathbf{v}_{\limfunc{right}}^{\left( i+k-1,i\right) }\cdot 
\mathbf{\beta }_{k}^{\ell }=0,\ \ \ 2\leq \ell \leq k\text{ and }k\leq i\leq
k+\ell -2,  \label{written as}
\end{equation}%
where 
\begin{equation*}
\mathbf{v}_{\limfunc{left}}^{\left( i+k-1,i\right) }\equiv \left[ 
\begin{array}{ccc}
& \left\vert I_{\limfunc{left}}^{\left( i\right) }\right\vert _{\mu }\cdots
& \left\vert I_{\limfunc{left}}^{\left( i+k-1\right) }\right\vert _{\mu }%
\end{array}%
\right] ,\ \ \ \mathbf{v}_{\limfunc{right}}^{\left( i+k-1,i\right) }\equiv %
\left[ 
\begin{array}{ccc}
\left\vert I_{\limfunc{right}}^{\left( i\right) }\right\vert _{\mu } & \cdots
& \left\vert I_{\limfunc{right}}^{\left( i+k-1\right) }\right\vert _{\mu }%
\end{array}%
\right] .
\end{equation*}

We now consider the $\left( 
\begin{array}{c}
k \\ 
2%
\end{array}%
\right) =\frac{k\left( k-1\right) }{2}$ orthogonality conditions among the $%
k $ Alpert functions $\left\{ a_{I}^{\mu ,\ell }\right\} _{\ell =1}^{k}$ and
the $\left( 
\begin{array}{c}
k \\ 
2%
\end{array}%
\right) $ additional moment conditions arising from the index choices $%
\left( \ell ,i\right) $ in (\ref{written as}), which for convenience in
visualizing we arrange in triangular form as%
\begin{equation}
\left\{ 
\begin{array}{ccccccc}
\left( k,k\right) & \left( k,k+1\right) & \left( k,k+2\right) & \cdots & 
\left( k,2k-4\right) & \left( k,2k-3\right) & \left( k,2k-2\right) \\ 
\left( k-1,k\right) & \left( k-1,k+1\right) & \left( k-1,k+2\right) & \cdots
& \left( k-1,2k-4\right) & \left( k-1,2k-3\right) &  \\ 
\left( k-2,k\right) & \left( k-2,k+1\right) & \left( k-2,k+2\right) & \cdots
& \left( k-2,2k-4\right) &  &  \\ 
\vdots & \vdots & \vdots & \vdots & \vdots & \vdots & \vdots \\ 
\left( 4,k\right) & \left( 4,k+1\right) & \left( 4,k+2\right) & \cdots &  & 
&  \\ 
\left( 3,k\right) & \left( 3,k+1\right) &  & \cdots &  &  &  \\ 
\left( 2,k\right) &  &  & \cdots &  &  & 
\end{array}%
\right\} .  \label{tri form}
\end{equation}%
Using $\mathbf{\alpha }_{k}^{\ell }=-\left[ \boldsymbol{L}_{I}^{k}\right]
^{-1}\boldsymbol{R}_{I}^{k}\ \mathbf{\beta }_{k}^{\ell }$, the corresponding
additional moment conditions are given by%
\begin{eqnarray*}
&&0=\mathbf{v}_{\limfunc{left}}^{\left( i+k-1,i\right) }\cdot \mathbf{\alpha 
}_{k}^{\ell }+\mathbf{v}_{\limfunc{right}}^{\left( i+k-1,i\right) }\cdot 
\mathbf{\beta }_{k}^{\ell }=\left\{ -\mathbf{v}_{\limfunc{left}}^{\left(
i+k-1,i\right) }\cdot \mathbf{M}_{I}^{k}+\mathbf{v}_{\limfunc{right}%
}^{\left( i+k-1,i\right) }\right\} \cdot \mathbf{\beta }_{k}^{\ell }=\mathbf{%
w}_{k}^{i}\cdot \mathbf{\beta }_{k}^{\ell } \\
&&\ \ \ \ \ \ \ \ \ \ \text{for }2\leq \ell \leq k\text{ and }k\leq i\leq
k+\ell -2,
\end{eqnarray*}%
where $\mathbf{M}_{I}^{k}\equiv -\left[ \boldsymbol{L}_{I}^{k}\right] ^{-1}%
\boldsymbol{R}_{I}^{k}\ \mathbf{\beta }_{k}^{\ell }$, and%
\begin{equation*}
\mathbf{v}_{\limfunc{left}}^{\left( i+k-1,i\right) }\equiv \left[ 
\begin{array}{ccc}
\left\vert I_{\limfunc{left}}^{\left( i\right) }\right\vert _{\mu } & \cdots
& \left\vert I_{\limfunc{left}}^{\left( i+k-1\right) }\right\vert _{\mu }%
\end{array}%
\right] ,\ \ \ \mathbf{v}_{\limfunc{right}}^{\left( i+k-1,i\right) }\equiv %
\left[ 
\begin{array}{ccc}
\left\vert I_{\limfunc{right}}^{\left( i\right) }\right\vert _{\mu } & \cdots
& \left\vert I_{\limfunc{right}}^{\left( i+k-1\right) }\right\vert _{\mu }%
\end{array}%
\right] ,
\end{equation*}%
and%
\begin{equation*}
\mathbf{w}_{k}^{i}\equiv -\mathbf{v}_{\limfunc{left}}^{\left( i+k-1,i\right)
}\cdot \mathbf{M}_{I}^{k}+\mathbf{v}_{\limfunc{right}}^{\left(
i+k-1,i\right) }.
\end{equation*}%
Thus the additional moment conditions are $0=\mathbf{w}_{k}^{i}\cdot \mathbf{%
\beta }_{k}^{\ell }$ for all $2\leq \ell \leq k$ and $k\leq i\leq k+\ell -2$%
, and written in the form of triangular matrix (\ref{tri form}) are%
\begin{eqnarray*}
0 &=&\mathbf{w}_{k}^{k}\cdot \mathbf{\beta }_{k}^{k}=\mathbf{w}%
_{k}^{k+1}\cdot \mathbf{\beta }_{k}^{k}=...=\mathbf{w}_{k}^{2k-2}\cdot 
\mathbf{\beta }_{k}^{k}\ , \\
0 &=&\mathbf{w}_{k}^{k}\cdot \mathbf{\beta }_{k}^{k-1}=\mathbf{w}%
_{k}^{k+1}\cdot \mathbf{\beta }_{k}^{k-1}=...=\mathbf{w}_{k}^{2k-3}\cdot 
\mathbf{\beta }_{k}^{k-1}\ , \\
0 &=&\mathbf{w}_{k}^{k}\cdot \mathbf{\beta }_{k}^{k-2}=\mathbf{w}%
_{k}^{k+1}\cdot \mathbf{\beta }_{k}^{k-2}=...=\mathbf{w}_{k}^{2k-4}\cdot 
\mathbf{\beta }_{k}^{k-2}\ , \\
&&\vdots \\
0 &=&\mathbf{w}_{k}^{k}\cdot \mathbf{\beta }_{k}^{4}=\mathbf{w}%
_{k}^{k+1}\cdot \mathbf{\beta }_{k}^{4}=\mathbf{w}_{k}^{k+2}\cdot \mathbf{%
\beta }_{k}^{4}\ , \\
0 &=&\mathbf{w}_{k}^{k}\cdot \mathbf{\beta }_{k}^{3}=\mathbf{w}%
_{k}^{k+1}\cdot \mathbf{\beta }_{k}^{3}\ , \\
0 &=&\mathbf{w}_{k}^{k}\cdot \mathbf{\beta }_{k}^{2}\ .
\end{eqnarray*}

Now recall that mutual orthogonality of the $k$ Alpert functions $\left\{
a_{I}^{\mu ,\ell }\right\} _{\ell =1}^{k}$ requires choosing $\mathbf{\beta }%
_{k}^{\ell ^{\prime }}$ perpendicular to the vector $\left( \mathbf{\beta }%
_{k}^{\ell }\right) ^{\limfunc{tr}}\boldsymbol{X}_{I}$ for $\ell \neq \ell
^{\prime }$. In addition we must have from (\ref{tri form}) that%
\begin{eqnarray*}
\mathbf{w}_{k}^{k},\mathbf{w}_{k}^{k+1},...,\mathbf{w}_{k}^{2k-2} &\in
&\left( \mathbf{\beta }_{k}^{k}\right) ^{\perp }\text{ }, \\
\mathbf{w}_{k}^{k},\mathbf{w}_{k}^{k+1},...,\mathbf{w}_{k}^{2k-3} &\in
&\left( \mathbf{\beta }_{k}^{k-1}\right) ^{\perp }\text{ }, \\
\mathbf{w}_{k}^{k},\mathbf{w}_{k}^{k+1},...,\mathbf{w}_{k}^{2k-4} &\in
&\left( \mathbf{\beta }_{k}^{k-2}\right) ^{\perp }\text{ }, \\
&&\vdots \\
\mathbf{w}_{k}^{k},\mathbf{w}_{k}^{k+1},\mathbf{w}_{k}^{k+2} &\in &\left( 
\mathbf{\beta }_{k}^{4}\right) ^{\perp }\ , \\
\mathbf{w}_{k}^{k},\mathbf{w}_{k}^{k+1} &\in &\left( \mathbf{\beta }%
_{k}^{3}\right) ^{\perp }\ , \\
\mathbf{w}_{k}^{k} &\in &\left( \mathbf{\beta }_{k}^{2}\right) ^{\perp }\ ,
\end{eqnarray*}%
where $\mathbf{\beta }^{\perp }\equiv \left\{ \mathbf{u}\in \mathbb{R}^{k}:%
\mathbf{u}\cdot \mathbf{\beta }=0\right\} $. Now each of the matrices $%
\boldsymbol{R}_{I}\boldsymbol{L}_{I}^{-1}\mathbf{R}_{I}$ and $\boldsymbol{R}%
_{I}$ is symmetric and positive definite, and so then is their sum $%
\boldsymbol{X}_{I}=\boldsymbol{R}_{I}\boldsymbol{L}_{I}^{-1}\boldsymbol{R}%
_{I}+\boldsymbol{R}_{I}$. Thus we can consider the inner product defined by%
\begin{equation*}
\left\langle \mathbf{v},\mathbf{w}\right\rangle _{I}\equiv \mathbf{v}^{%
\limfunc{tr}}\boldsymbol{X}_{I}\mathbf{w}\ .
\end{equation*}%
This gives us a $k$-dimensional inner product space which we denote by $%
\mathcal{X}_{I}$. Now the condition $\mathbf{w}_{k}^{i}\cdot \mathbf{\beta }%
_{k}^{\ell }=0$ can be written as%
\begin{equation*}
\left\langle \mathbf{u}_{k}^{i},\mathbf{\beta }_{k}^{\ell }\right\rangle
_{I}=\left( \mathbf{u}_{k}^{i}\right) ^{\limfunc{tr}}\boldsymbol{X}_{I}%
\mathbf{\beta }_{k}^{\ell }=\mathbf{w}_{k}^{i}\cdot \mathbf{\beta }%
_{k}^{\ell }=0
\end{equation*}%
if we define $\mathbf{u}_{k}^{i}=\boldsymbol{X}_{I}^{-1}\left( \mathbf{w}%
_{k}^{i}\right) ^{\limfunc{tr}}$. Then our conditions become%
\begin{eqnarray}
\left\langle \mathbf{\beta }_{k}^{\ell },\mathbf{\beta }_{k}^{\ell ^{\prime
}}\right\rangle _{I} &=&0\text{ for }1\leq \ell <\ell ^{\prime }\leq k,
\label{become} \\
\left\langle \mathbf{u}_{k}^{i},\mathbf{\beta }_{k}^{\ell }\right\rangle
_{I} &=&0\ \text{for }2\leq \ell \leq k\text{ and }k\leq i\leq k+\ell -2, 
\notag
\end{eqnarray}%
and the second line above can be written out in triangular form as%
\begin{eqnarray}
\mathbf{u}_{k}^{k},\mathbf{u}_{k}^{k+1},...,\mathbf{u}_{k}^{2k-2} &\in
&\left( \mathbf{\beta }_{k}^{k}\right) ^{\perp _{I}}\text{ },
\label{tri form u} \\
\mathbf{u}_{k}^{k},\mathbf{u}_{k}^{k+1},...,\mathbf{u}_{k}^{2k-3} &\in
&\left( \mathbf{\beta }_{k}^{k-1}\right) ^{\perp _{I}}\text{ },  \notag \\
\mathbf{u}_{k}^{k},\mathbf{u}_{k}^{k+1},...,\mathbf{u}_{k}^{2k-4} &\in
&\left( \mathbf{\beta }_{k}^{k-2}\right) ^{\perp _{I}}\text{ },  \notag \\
&&\vdots  \notag \\
\mathbf{u}_{k}^{k},\mathbf{u}_{k}^{k+1},\mathbf{u}_{k}^{k+2} &\in &\left( 
\mathbf{\beta }_{k}^{4}\right) ^{\perp _{I}}\ ,  \notag \\
\mathbf{u}_{k}^{k},\mathbf{u}_{k}^{k+1} &\in &\left( \mathbf{\beta }%
_{k}^{3}\right) ^{\perp _{I}}\ ,  \notag \\
\mathbf{u}_{k}^{k} &\in &\left( \mathbf{\beta }_{k}^{2}\right) ^{\perp
_{I}}\ ,  \notag
\end{eqnarray}%
where $\mathbf{\beta }^{\perp _{I}}\equiv \left\{ \mathbf{u}\in \mathcal{X}%
_{I}:\left\langle \mathbf{u},\mathbf{\beta }\right\rangle _{I}=0\right\} $.

We can satisfy conditions (\ref{become}) by first choosing a unit vector $%
\mathbf{\beta }_{k}^{k}\in \mathcal{X}_{I}$ so that the first line in (\ref%
{tri form u}) holds. Then we choose a unit vector $\mathbf{\beta }%
_{k}^{k-1}\in \left( \mathbf{\beta }_{k}^{k}\right) ^{\perp _{I}}$ so that
the second line in (\ref{tri form u}) holds. Then we choose a unit vector $%
\mathbf{\beta }_{k}^{k-2}\in \left( \limfunc{Span}\left\{ \mathbf{\beta }%
_{k}^{k-1},\mathbf{\beta }_{k}^{k}\right\} \right) ^{\perp _{I}}$so that the
third line in (\ref{tri form u}) holds. Continuing in this way, we find\
unit vectors $\left\{ \mathbf{\beta }_{k}^{\ell }\right\} _{\ell =1}^{k}$ in
the inner product space $\mathcal{X}_{I}$ so that our conditions (\ref{tri
form u}) and (\ref{become}) hold. For the generic choice of vectors $\mathbf{%
v}_{\limfunc{left}}^{\left( i+k-1,i\right) }$ and $\mathbf{v}_{\limfunc{right%
}}^{\left( i+k-1,i\right) }$, the choice of $\left\{ \mathbf{\beta }%
_{k}^{\ell }\right\} _{\ell =1}^{k}$ will be unique up to sign. This
completes the proof of part (2) of Theorem \ref{dim one}.

\bigskip

We have adopted the additional moment conditions introduced by Alpert in the
setting of Lebesgue measure in \cite{Alp}, but one can in fact replace these
conditions by an essentially arbitrary collection of the correct number of
moment conditions. We leave the nondegenerate case for the reader in the
following exercise, and the degenerate case is not treated here at all,
except for the simple case when $k=2$ and $n=1$ solved below.

\begin{remark}
If $\boldsymbol{M}_{I_{\limfunc{left}},k}\succ 0$, $\boldsymbol{M}_{I_{%
\limfunc{right}},k}\succ 0$ and $\mathcal{M}\subset \left\{ \left( \ell
,i\right) :1\leq \ell \leq k\text{ and }i\geq k\right\} $ has cardinality $%
\left( 
\begin{array}{c}
k \\ 
2%
\end{array}%
\right) $, then we can choose an Alpert basis $\left\{ a_{I}^{\mu ,\ell
}\right\} _{\ell =1}^{k}$ satisfying the additional moment conditions $%
\mathbf{w}_{k}^{i}\cdot \mathbf{\beta }_{k}^{\ell }=0$ for all $\left( \ell
,i\right) \in \mathcal{M}$.
\end{remark}

\subsubsection{The Alpert degenerate case}

In the event that one or more of the matrices $\boldsymbol{L}_{I}^{k}$
and/or $\boldsymbol{R}_{I}^{k}$ is singular, then it is easy to see from (%
\ref{Alpert moment k}) that the maximum number of independent Alpert
functions equals the dimension of the intersection of the ranges of $%
\boldsymbol{L}_{I}^{k}$ and $\boldsymbol{R}_{I}^{k}$ in $\mathbb{R}^{k}$,
i.e.%
\begin{equation*}
\dim L_{Q;k}^{2}\left( \mu \right) =\dim \left( \limfunc{Range}\boldsymbol{L}%
_{I}^{k}\bigcap \limfunc{Range}\boldsymbol{R}_{I}^{k}\right) \ .
\end{equation*}%
In the special case when $k=2$ - when there are only two Alpert functions $%
a_{I}^{\mu ,1}$, $a_{I}^{\mu ,2}$ and just one additional moment condition
for $a_{I}^{\mu ,2}$ - we will show in the next subsection that it is not
always possible to arrange for this additional moment condition to hold. In
fact we will show there that it holds if and only if%
\begin{equation*}
\left( 
\begin{array}{c}
\left\vert I_{\limfunc{right}}^{\left( 2\right) }\right\vert _{\mu } \\ 
\left\vert I_{\limfunc{right}}^{\left( 3\right) }\right\vert _{\mu }%
\end{array}%
\right) \in \limfunc{Range}\boldsymbol{R}_{I}^{2}\ .
\end{equation*}%
Similar results hold for larger $k$, but we will not pursue these here.

\subsection{The special case $n=1$ and $k=2\label{Sub special}$}

First we quickly review and set notation for the nondegenerate case when $%
k=2 $, and later proceed to the degenerate case. Let $\mu $ be a locally
finite positive Borel measure on $\mathbb{R}$ that satisfies the Haar
nondegeneracy condition $\left\vert J\right\vert _{\mu }>0$ for all $J\in 
\mathcal{D}$ and in addition satisfies the Alpert nondegeneracy condition%
\begin{equation}
\det \left[ 
\begin{array}{cc}
\left\vert J^{\left( 0\right) }\right\vert _{\mu } & \left\vert J^{\left(
1\right) }\right\vert _{\mu } \\ 
\left\vert J^{\left( 1\right) }\right\vert _{\mu } & \left\vert J^{\left(
2\right) }\right\vert _{\mu }%
\end{array}%
\right] >0,\ \ \ \ \ \text{for all }J\in \mathcal{D}.  \label{Alpert nondeg}
\end{equation}%
Recall that the determinant in (\ref{Alpert nondeg}) is nonnegative by the
Cauchy-Schwarz inequality, and is positive \emph{if and only if} the
functions $\mathbf{1}_{J}\left( x\right) $ and $x\mathbf{1}_{J}\left(
x\right) $ are linearly independent on $J$:%
\begin{equation*}
\left\vert J^{\left( 1\right) }\right\vert _{\mu }^{2}=\left( \int_{J}xd\mu
\left( x\right) \right) ^{2}\leq \left( \int_{J}1^{2}d\mu \left( x\right)
\right) \left( \int_{J}x^{2}d\mu \left( x\right) \right) =\left\vert
J^{\left( 0\right) }\right\vert _{\mu }\left\vert J^{\left( 2\right)
}\right\vert _{\mu }
\end{equation*}%
with equality if and only if the functions $\mathbf{1}_{J}\left( x\right) $
and $x\mathbf{1}_{J}\left( x\right) $ are linearly dependent on $J$.

We require the functions%
\begin{eqnarray*}
a_{I}^{\mu ,1}\left( x\right) &\equiv &\left( \alpha _{1}^{1}x+\alpha
_{0}^{1}\right) \mathbf{1}_{I_{\limfunc{left}}}\left( x\right) +\left( \beta
_{1}^{1}x+\beta _{0}^{1}\right) \mathbf{1}_{I_{\limfunc{right}}}\left(
x\right) , \\
a_{I}^{\mu ,2}\left( x\right) &\equiv &\left( \alpha _{1}^{2}x+\alpha
_{0}^{2}\right) \mathbf{1}_{I_{\limfunc{left}}}\left( x\right) +\left( \beta
_{1}^{2}x+\beta _{0}^{2}\right) \mathbf{1}_{I_{\limfunc{right}}}\left(
x\right) ,
\end{eqnarray*}%
to satisfy the moment properties,%
\begin{eqnarray*}
&&\left( A\right) \ \int a_{I}^{\mu ,1}\left( x\right) d\mu \left( x\right)
=0\text{ and }\int a_{I}^{\mu ,1}\left( x\right) xd\mu \left( x\right) =0, \\
&&\left( B\right) \ \int a_{I}^{\mu ,2}\left( x\right) d\mu \left( x\right)
=0\text{ and }\int a_{I}^{\mu ,2}\left( x\right) xd\mu \left( x\right) =0, \\
&&\left( C\right) \ \int a_{I}^{\mu ,2}\left( x\right) x^{2}d\mu \left(
x\right) =0,
\end{eqnarray*}%
the orthogonality property,%
\begin{equation*}
(D)\ \int a_{I}^{\mu ,1}\left( x\right) a_{I}^{\mu ,2}\left( x\right) d\mu
\left( x\right) =0,
\end{equation*}%
and the normalization properties, 
\begin{eqnarray*}
\left( \int \left\vert a_{I}^{\mu ,1}\left( x\right) \right\vert ^{2}d\mu
\left( x\right) \right) ^{\frac{1}{2}} &=&1, \\
\left( \int \left\vert a_{I}^{\mu ,2}\left( x\right) \right\vert ^{2}d\mu
\left( x\right) \right) ^{\frac{1}{2}} &=&1.
\end{eqnarray*}%
Theorem \ref{main1} above gives the following conclusion.

\begin{theorem}
The collection 
\begin{equation*}
\mathcal{U}^{\limfunc{Alpert},\mu }\equiv \left\{ a_{I}^{\mu ,1},a_{I}^{\mu
,2}\right\} _{I\in \mathcal{D}}
\end{equation*}%
is an orthonormal basis for $L^{2}\left( \mu \right) $.
\end{theorem}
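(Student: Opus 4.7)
The plan is to derive the theorem as an immediate specialization of Theorem~\ref{main1} and Theorem~\ref{dim one} to $n=1$, $k=2$. First I would invoke part~(1) of Theorem~\ref{main1} to produce the orthogonal Hilbert space decomposition
\[
L^{2}(\mu) \;=\; \mathcal{P}_{\mathbb{R}}^{2}(\mu) \,\oplus\, \bigoplus_{I\in\mathcal{D}} L_{I;2}^{2}(\mu),
\]
with convergence both in $L^{2}(\mu)$-norm and $\mu$-a.e. Under the Alpert nondegeneracy condition (\ref{Alpert nondeg}), part~(1) of Theorem~\ref{dim one} gives $\dim L_{I;2}^{2}(\mu)=2$ for every $I\in\mathcal{D}$. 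With the conventional understanding (already used in the Haar case earlier in the paper) that one adjoins an orthonormal basis of the finite-dimensional polynomial space $\mathcal{P}_{\mathbb{R}}^{2}(\mu)$ whenever it is nonzero, it therefore suffices to show that for each $I\in\mathcal{D}$ the pair $\{a_{I}^{\mu,1},a_{I}^{\mu,2}\}$ is an orthonormal basis of the two-dimensional fiber $L_{I;2}^{2}(\mu)$.

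Next I would verify the basis property on each fiber. By construction each $a_{I}^{\mu,\ell}$ is supported on $I$ and is piecewise polynomial of degree at most $1$; moment condition (A) together with the first two identities of (B) place both $a_{I}^{\mu,1}$ and $a_{I}^{\mu,2}$ in $L_{I;2}^{2}(\mu)$; condition (D) gives mutual orthogonality, and the normalization conditions give unit $L^{2}(\mu)$-norm. Two orthonormal vectors in a two-dimensional Hilbert space automatically span it, so $\{a_{I}^{\mu,1},a_{I}^{\mu,2}\}$ is an orthonormal basis of $L_{I;2}^{2}(\mu)$. Orthogonality \emph{across} distinct dyadic intervals $I\neq I'$ is free from the orthogonality of the subspaces $L_{I;2}^{2}(\mu)$ and $L_{I';2}^{2}(\mu)$ established in the proof of Theorem~\ref{main1}.

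The only point needing genuine work is the existence of coefficients $\{\alpha_{j}^{\ell},\beta_{j}^{\ell}\}$ that simultaneously solve (A), (B), (D), the normalizations, and the \emph{additional} moment condition (C): $\int a_{I}^{\mu,2}(x)\,x^{2}\,d\mu(x)=0$. This is the degree-of-freedom bookkeeping from the discussion preceding the theorem. Concretely, I would use (\ref{solve}) to eliminate $\boldsymbol{\alpha}_{2}^{\ell}=-[\boldsymbol{L}_{I}^{2}]^{-1}\boldsymbol{R}_{I}^{2}\boldsymbol{\beta}_{2}^{\ell}$, endow $\mathbb{R}^{2}$ with the inner product $\langle\cdot,\cdot\rangle_{I}$ coming from $\boldsymbol{X}_{I}=\boldsymbol{R}_{I}\boldsymbol{L}_{I}^{-1}\boldsymbol{R}_{I}+\boldsymbol{R}_{I}\succ 0$, and observe that condition (C) becomes a single linear constraint $\langle\mathbf{u}_{2}^{2},\boldsymbol{\beta}_{2}^{2}\rangle_{I}=0$ in this two-dimensional space. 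Choose any unit $\boldsymbol{\beta}_{2}^{2}$ in the one-dimensional line $(\mathbf{u}_{2}^{2})^{\perp_{I}}$, then a unit $\boldsymbol{\beta}_{2}^{1}\in(\boldsymbol{\beta}_{2}^{2})^{\perp_{I}}$ to enforce (D); this is precisely the construction in part~(2) of Theorem~\ref{dim one} specialized to $k=2$.

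Combining these ingredients, the collection $\mathcal{U}^{\mathrm{Alpert},\mu}$ is orthonormal (inside each summand by the fiberwise argument, across summands by Theorem~\ref{main1}) and complete (since its closed span equals the direct sum that exhausts $L^{2}(\mu)$). I expect the only real obstacle to be checking that the Alpert nondegeneracy indeed makes $\boldsymbol{X}_{I}$ positive definite so that $(\mathbf{u}_{2}^{2})^{\perp_{I}}$ is a genuine one-dimensional line; once that is granted, everything else is direct assembly of the previously established theorems.
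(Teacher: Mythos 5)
Your proposal is correct and follows essentially the same route as the paper: the paper disposes of this theorem in a single line by citing Theorem \ref{main1}, and your argument is precisely the unpacking of that citation (completeness from the orthogonal decomposition in Theorem \ref{main1}, fiber dimension $2$ from the Alpert nondegeneracy condition via Theorem \ref{dim one}, and orthonormality within each fiber from the defining conditions (A), (B), (D) and the normalizations, with cross-fiber orthogonality coming from the orthogonality of the subspaces $L_{I;2}^{2}(\mu)$). Your additional care about adjoining a basis of $\mathcal{P}_{\mathbb{R}}^{2}(\mu)$ when it is nontrivial, and about the solvability of the coefficient system including condition (C), only makes explicit what the paper leaves implicit.
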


If we wish to include the additional moment condition (C), i.e.$\ \int
a_{I}^{\mu ,2}\left( x\right) x^{2}d\mu \left( x\right) =0$, then we must in
addition solve%
\begin{eqnarray*}
0 &=&\int \left\{ \left( \alpha _{1}^{2}x+\alpha _{0}^{2}\right) \mathbf{1}%
_{I_{\limfunc{left}}}\left( x\right) +\left( \beta _{1}^{2}x+\beta
_{0}^{2}\right) \mathbf{1}_{I_{\limfunc{right}}}\left( x\right) \right\}
x^{2}d\mu \left( x\right) \\
&=&\alpha _{0}^{2}\left\vert I_{\limfunc{left}}^{\left( 2\right)
}\right\vert _{\mu }+\alpha _{1}^{2}\left\vert I_{\limfunc{left}}^{\left(
3\right) }\right\vert _{\mu }+\beta _{0}^{2}\left\vert I_{\limfunc{right}%
}^{\left( 2\right) }\right\vert _{\mu }+\beta _{1}^{2}\left\vert I_{\limfunc{%
right}}^{\left( 3\right) }\right\vert _{\mu } \\
&=&\left( 
\begin{array}{cc}
\left\vert I_{\limfunc{left}}^{\left( 2\right) }\right\vert _{\mu } & 
\left\vert I_{\limfunc{left}}^{\left( 3\right) }\right\vert _{\mu }%
\end{array}%
\right) \left( 
\begin{array}{c}
\alpha _{0}^{2} \\ 
\alpha _{1}^{2}%
\end{array}%
\right) +\left( 
\begin{array}{cc}
\left\vert I_{\limfunc{right}}^{\left( 2\right) }\right\vert _{\mu } & 
\left\vert I_{\limfunc{right}}^{\left( 3\right) }\right\vert _{\mu }%
\end{array}%
\right) \left( 
\begin{array}{c}
\beta _{0}^{2} \\ 
\beta _{1}^{2}%
\end{array}%
\right) \\
&=&\left\{ -\left( 
\begin{array}{cc}
\left\vert I_{\limfunc{left}}^{\left( 2\right) }\right\vert _{\mu } & 
\left\vert I_{\limfunc{left}}^{\left( 3\right) }\right\vert _{\mu }%
\end{array}%
\right) L_{I}^{-1}R_{I}+\left( 
\begin{array}{cc}
\left\vert I_{\limfunc{right}}^{\left( 2\right) }\right\vert _{\mu } & 
\left\vert I_{\limfunc{right}}^{\left( 3\right) }\right\vert _{\mu }%
\end{array}%
\right) \right\} \left( 
\begin{array}{c}
\beta _{0}^{2} \\ 
\beta _{1}^{2}%
\end{array}%
\right)
\end{eqnarray*}%
which means we must choose $\left( 
\begin{array}{c}
\beta _{0}^{2} \\ 
\beta _{1}^{2}%
\end{array}%
\right) $ perpendicular to the vector 
\begin{equation*}
-\left( 
\begin{array}{cc}
\left\vert I_{\limfunc{left}}^{\left( 2\right) }\right\vert _{\mu } & 
\left\vert I_{\limfunc{left}}^{\left( 3\right) }\right\vert _{\mu }%
\end{array}%
\right) \boldsymbol{L}_{I}^{-1}\boldsymbol{R}_{I}+\left( 
\begin{array}{cc}
\left\vert I_{\limfunc{right}}^{\left( 2\right) }\right\vert _{\mu } & 
\left\vert I_{\limfunc{right}}^{\left( 3\right) }\right\vert _{\mu }%
\end{array}%
\right) .
\end{equation*}%
Thus it must be the case that the two vectors%
\begin{equation*}
\left( 
\begin{array}{cc}
\beta _{0}^{1} & \beta _{1}^{1}%
\end{array}%
\right) \boldsymbol{R}_{I}\boldsymbol{L}_{I}^{-1}\left\{ \boldsymbol{R}_{I}+%
\boldsymbol{L}_{I}\right\}
\end{equation*}%
and%
\begin{equation*}
-\left( 
\begin{array}{cc}
\left\vert I_{\limfunc{left}}^{\left( 2\right) }\right\vert _{\mu } & 
\left\vert I_{\limfunc{left}}^{\left( 3\right) }\right\vert _{\mu }%
\end{array}%
\right) L_{I}^{-1}R_{I}+\left( 
\begin{array}{cc}
\left\vert I_{\limfunc{right}}^{\left( 2\right) }\right\vert _{\mu } & 
\left\vert I_{\limfunc{right}}^{\left( 3\right) }\right\vert _{\mu }%
\end{array}%
\right)
\end{equation*}%
are parallel. But since $\boldsymbol{R}_{I}\boldsymbol{L}_{I}^{-1}\left\{ 
\boldsymbol{R}_{I}+\boldsymbol{L}_{I}\right\} $ is invertible, this can
clearly be achieved by choosing $\left( 
\begin{array}{cc}
\beta _{0}^{1} & \beta _{1}^{1}%
\end{array}%
\right) $ appropriately, thereby using up our last degree of freedom in the
case $k=2$.

Finally, we examine what happens when one or more of the nondegeneracy
conditions (\ref{Haar nondeg}) and (\ref{Alpert nondeg}) fails. Note that
for a given interval $J$, we have that (\ref{Haar nondeg}) holds and (\ref%
{Alpert nondeg}) fails \emph{if and only if} $\mu 1_{J}$ is a point mass.
Indeed, $\mathbf{1}_{J}\left( x\right) $ and $x\mathbf{1}_{J}\left( x\right) 
$ are dependent if and only if $\mu 1_{J}$ is a point mass, which we locate
at $x_{J}\in J$. In this case we set%
\begin{equation*}
\mu 1_{J}=\left\vert J\right\vert _{\mu }\delta _{x_{J}}\ .
\end{equation*}%
We also recall%
\begin{equation*}
\boldsymbol{M}_{J}=\boldsymbol{M}_{J,2}\equiv \left[ 
\begin{array}{cc}
\left\vert J_{\limfunc{left}}^{\left( 0\right) }\right\vert _{\mu } & 
\left\vert J_{\limfunc{left}}^{\left( 1\right) }\right\vert _{\mu } \\ 
\left\vert J_{\limfunc{left}}^{\left( 1\right) }\right\vert _{\mu } & 
\left\vert J_{\limfunc{left}}^{\left( 2\right) }\right\vert _{\mu }%
\end{array}%
\right] \text{ for }J\in \mathcal{D}.
\end{equation*}

\begin{lemma}
Let $I=I_{\limfunc{left}}\dot{\cup}I_{\limfunc{right}}$ be the decomposition
of $I$ into its two children.

\begin{enumerate}
\item If (\ref{Haar nondeg}) holds for both $I_{\limfunc{left}}$ and $I_{%
\limfunc{right}}$, and (\ref{Alpert nondeg}) holds for $I_{\limfunc{left}}$
but fails for $I_{\limfunc{right}}$, then $a_{I}^{\mu ,1}=a_{I}^{\mu ,2}\neq
0$ in $L^{2}\left( \mu \right) $. This conclusion persists in the opposite
situation where (\ref{Haar nondeg}) holds for both $I_{\limfunc{left}}$ and $%
I_{\limfunc{right}}$, but (\ref{Alpert nondeg}) fails for $I_{\limfunc{left}%
} $ and holds for $I_{\limfunc{right}}$.

\item In all other degenerate cases, where at least one of the matrices $%
\left[ 
\begin{array}{cc}
\left\vert \left( I_{\limfunc{left}}\right) ^{\left( 0\right) }\right\vert
_{\mu } & \left\vert \left( I_{\limfunc{left}}\right) ^{\left( 1\right)
}\right\vert _{\mu } \\ 
\left\vert \left( I_{\limfunc{left}}\right) ^{\left( 1\right) }\right\vert
_{\mu } & \left\vert \left( I_{\limfunc{left}}\right) ^{\left( 2\right)
}\right\vert _{\mu }%
\end{array}%
\right] $, $\left[ 
\begin{array}{cc}
\left\vert \left( I_{\limfunc{right}}\right) ^{\left( 0\right) }\right\vert
_{\mu } & \left\vert \left( I_{\limfunc{right}}\right) ^{\left( 1\right)
}\right\vert _{\mu } \\ 
\left\vert \left( I_{\limfunc{right}}\right) ^{\left( 1\right) }\right\vert
_{\mu } & \left\vert \left( I_{\limfunc{right}}\right) ^{\left( 2\right)
}\right\vert _{\mu }%
\end{array}%
\right] $ fails to be positive definite, we have $a_{I}^{\mu ,1}=a_{I}^{\mu
,2}=0$ in $L^{2}\left( \mu \right) $.
\end{enumerate}
\end{lemma}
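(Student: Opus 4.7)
The idea is to reduce both claims to a dimension count for $d_{I;2}\equiv\dim L_{I;2}^{2}(\mu)$, using the identity
\[
d_{I;2}=\dim\bigl(\limfunc{Range}\boldsymbol{L}_{I,2}\cap\limfunc{Range}\boldsymbol{R}_{I,2}\bigr)
\]
from Theorem \ref{dim one}(1). Since $\{a_{I}^{\mu,1},a_{I}^{\mu,2}\}$ is produced by the Gram--Schmidt procedure so as to span $L_{I;2}^{2}(\mu)$, the case $d_{I;2}=1$ forces these two unit vectors to coincide (up to sign) as the unique direction in that line, whereas $d_{I;2}=0$ forces them both to vanish in $L^{2}(\mu)$.

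For part (1), assume without loss of generality that (\ref{Alpert nondeg}) holds on $I_{\limfunc{left}}$ but fails on $I_{\limfunc{right}}$, while (\ref{Haar nondeg}) holds on both children. Then $\boldsymbol{L}_{I,2}\succ 0$, so $\limfunc{Range}\boldsymbol{L}_{I,2}=\mathbb{R}^{2}$. The matrix $\boldsymbol{R}_{I,2}$ is symmetric positive semidefinite with $(1,1)$-entry $\lvert I_{\limfunc{right}}\rvert_{\mu}>0$ and vanishing determinant, hence has rank exactly one. Therefore $d_{I;2}=\dim(\mathbb{R}^{2}\cap\limfunc{Range}\boldsymbol{R}_{I,2})=1$, and the conclusion $a_{I}^{\mu,1}=a_{I}^{\mu,2}\neq 0$ follows immediately.

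For part (2), split into two subcases. If (\ref{Haar nondeg}) fails on some child, say $\lvert I_{\limfunc{left}}\rvert_{\mu}=0$, then every entry of $\boldsymbol{L}_{I,2}$ is the integral of a polynomial over a $\mu$-null set and hence vanishes, so $\boldsymbol{L}_{I,2}=0$ and $d_{I;2}=0$ regardless of $\boldsymbol{R}_{I,2}$. Otherwise (\ref{Haar nondeg}) holds on both children but (\ref{Alpert nondeg}) fails on both; by the Van der Monde remark we then have $\mu\mathbf{1}_{I_{\limfunc{left}}}=c_{L}\delta_{x_{L}}$ and $\mu\mathbf{1}_{I_{\limfunc{right}}}=c_{R}\delta_{x_{R}}$ for some $x_{L}\in I_{\limfunc{left}}$ and $x_{R}\in I_{\limfunc{right}}$. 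A direct computation then yields
\[
\boldsymbol{L}_{I,2}=c_{L}\begin{pmatrix}1\\ x_{L}\end{pmatrix}\begin{pmatrix}1 & x_{L}\end{pmatrix},\qquad \boldsymbol{R}_{I,2}=c_{R}\begin{pmatrix}1\\ x_{R}\end{pmatrix}\begin{pmatrix}1 & x_{R}\end{pmatrix},
\]
whose ranges are the lines spanned by $(1,x_{L})^{\limfunc{tr}}$ and $(1,x_{R})^{\limfunc{tr}}$ respectively; these are distinct because $I_{\limfunc{left}}\cap I_{\limfunc{right}}=\emptyset$ forces $x_{L}\neq x_{R}$. Hence their intersection is $\{0\}$, so $d_{I;2}=0$ and both Alpert functions vanish in $L^{2}(\mu)$.

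The only mild obstacle is the passage from the dimension count to the precise assertions about the Gram--Schmidt outputs $a_{I}^{\mu,\ell}$; this is automatic, since any orthonormal family in a space of dimension at most one is completely determined (up to sign) by that dimension, giving equality in part (1) and simultaneous vanishing in part (2).
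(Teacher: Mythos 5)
Your proof is correct and takes essentially the same route as the paper: both arguments reduce to the identity $\dim L_{I;2}^{2}\left( \mu \right) =\dim \left( \limfunc{Range}\boldsymbol{L}_{I}^{2}\bigcap \limfunc{Range}\boldsymbol{R}_{I}^{2}\right)$ together with the observation that a child on which (\ref{Haar nondeg}) holds but (\ref{Alpert nondeg}) fails carries a point mass, making its moment matrix a rank-one dyad whose range is spanned by $(1,x_{J})^{\limfunc{tr}}$. Your handling of part (1) is marginally more streamlined than the paper's, which instead derives the linear dependence of $a_{I}^{\mu ,1}$ and $a_{I}^{\mu ,2}$ directly from the one-dimensionality of the null space of $\boldsymbol{R}_{I}$, but the substance is identical.
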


\begin{proof}
Fix $I\in \mathcal{D}$.

Assertion (\textbf{1}): We have%
\begin{equation*}
\det \left[ 
\begin{array}{cc}
\left\vert \left( I_{\limfunc{left}}\right) ^{\left( 0\right) }\right\vert
_{\mu } & \left\vert \left( I_{\limfunc{left}}\right) ^{\left( 1\right)
}\right\vert _{\mu } \\ 
\left\vert \left( I_{\limfunc{left}}\right) ^{\left( 1\right) }\right\vert
_{\mu } & \left\vert \left( I_{\limfunc{left}}\right) ^{\left( 2\right)
}\right\vert _{\mu }%
\end{array}%
\right] >0\text{ and }\det \left[ 
\begin{array}{cc}
\left\vert \left( I_{\limfunc{right}}\right) ^{\left( 0\right) }\right\vert
_{\mu } & \left\vert \left( I_{\limfunc{right}}\right) ^{\left( 1\right)
}\right\vert _{\mu } \\ 
\left\vert \left( I_{\limfunc{right}}\right) ^{\left( 1\right) }\right\vert
_{\mu } & \left\vert \left( I_{\limfunc{right}}\right) ^{\left( 2\right)
}\right\vert _{\mu }%
\end{array}%
\right] =0.
\end{equation*}%
Thus $\mathbf{1}_{I_{j,\limfunc{right}}^{m}}\mu $ is a point mass located at 
$x_{I_{j,\limfunc{right}}^{m}}$. Now (\ref{solve}) shows that $\left( 
\begin{array}{c}
\alpha _{0}^{\ell } \\ 
\alpha _{1}^{\ell }%
\end{array}%
\right) $ is uniquely determined by $\left( 
\begin{array}{c}
\beta _{0}^{\ell } \\ 
\beta _{1}^{\ell }%
\end{array}%
\right) $ for $1\leq \ell \leq 2$, and we know that the range of $\left[ 
\begin{array}{cc}
\left\vert \left( I_{j,\limfunc{right}}^{m}\right) ^{\left( 2\right)
}\right\vert _{\mu } & \left\vert \left( I_{j,\limfunc{right}}^{m}\right)
^{\left( 1\right) }\right\vert _{\mu } \\ 
\left\vert \left( I_{j,\limfunc{right}}^{m}\right) ^{\left( 1\right)
}\right\vert _{\mu } & \left\vert \left( I_{j,\limfunc{right}}^{m}\right)
^{\left( 0\right) }\right\vert _{\mu }%
\end{array}%
\right] $ is just one-dimensional, since $\mathbf{1}_{I_{j,\limfunc{right}%
}^{m}}\mu $ is a point mass located at $x_{I_{j,\limfunc{right}}^{m}}$. It
follows that both $a_{I}^{\mu ,1}$ and $a_{I}^{\mu ,2}$ are constant on the
interval $I_{j,\limfunc{right}}^{m}$ in the space $L^{2}\left( \mu \right) $%
, and since the null space of $\left[ 
\begin{array}{cc}
\left\vert \left( I_{j,\limfunc{right}}^{m}\right) ^{\left( 0\right)
}\right\vert _{\mu } & \left\vert \left( I_{j,\limfunc{right}}^{m}\right)
^{\left( 1\right) }\right\vert _{\mu } \\ 
\left\vert \left( I_{j,\limfunc{right}}^{m}\right) ^{\left( 1\right)
}\right\vert _{\mu } & \left\vert \left( I_{j,\limfunc{right}}^{m}\right)
^{\left( 2\right) }\right\vert _{\mu }%
\end{array}%
\right] $ is one-dimensional, namely $\limfunc{Span}\left( 
\begin{array}{c}
-x_{I_{j,\limfunc{right}}^{m}} \\ 
1%
\end{array}%
\right) $, we see that the functions $a_{I}^{\mu ,1}$ and $a_{I}^{\mu ,2}$
are linearly dependent, i.e. there is only one Alpert function in this case,
i.e. $a_{I_{j}^{m}}^{\mu ,1}=a_{I_{j}^{m}}^{\mu ,2}$.

Assertion (\textbf{2}): From above we have $\dim L_{Q;2}^{2}\left( \mu
\right) =\dim \left( \limfunc{Range}\boldsymbol{L}_{I}^{2}\bigcap \limfunc{%
Range}\boldsymbol{R}_{I}^{2}\right) $. Thus if one of the ranges is $\left\{
0\right\} $ we are done. So we are left with the case where (\ref{Haar
nondeg}) holds for both $I_{j,\limfunc{left}}^{m}$ and $I_{j,\limfunc{right}%
}^{m}$, and (\ref{Alpert nondeg}) fails for both $I_{j,\limfunc{left}}^{m}$
and $I_{j,\limfunc{right}}^{m}$. We then have%
\begin{equation*}
\left[ 
\begin{array}{cc}
\left\vert \left( I_{j,\limfunc{left}}^{m}\right) ^{\left( 0\right)
}\right\vert _{\mu } & \left\vert \left( I_{j,\limfunc{left}}^{m}\right)
^{\left( 1\right) }\right\vert _{\mu } \\ 
\left\vert \left( I_{j,\limfunc{left}}^{m}\right) ^{\left( 1\right)
}\right\vert _{\mu } & \left\vert \left( I_{j,\limfunc{left}}^{m}\right)
^{\left( 2\right) }\right\vert _{\mu }%
\end{array}%
\right] \left( 
\begin{array}{c}
\alpha _{0}^{1} \\ 
\alpha _{1}^{1}%
\end{array}%
\right) =-\left[ 
\begin{array}{cc}
\left\vert \left( I_{j,\limfunc{right}}^{m}\right) ^{\left( 0\right)
}\right\vert _{\mu } & \left\vert \left( I_{j,\limfunc{right}}^{m}\right)
^{\left( 1\right) }\right\vert _{\mu } \\ 
\left\vert \left( I_{j,\limfunc{right}}^{m}\right) ^{\left( 1\right)
}\right\vert _{\mu } & \left\vert \left( I_{j,\limfunc{right}}^{m}\right)
^{\left( 2\right) }\right\vert _{\mu }%
\end{array}%
\right] \left( 
\begin{array}{c}
\beta _{0}^{1} \\ 
\beta _{1}^{1}%
\end{array}%
\right) .
\end{equation*}%
Now the point mass $\mathbf{1}_{I_{\limfunc{right}}}\mu $ is located at $%
x_{I_{j,\limfunc{right}}^{m}}$, and the point mass $\mathbf{1}_{I_{\limfunc{%
left}}}\mu $ is located at $x_{I_{j,\limfunc{left}}^{m}}$, and thus the
one-dimensional ranges of $\left[ 
\begin{array}{cc}
\left\vert \left( I_{j,\limfunc{left}}^{m}\right) ^{\left( 0\right)
}\right\vert _{\mu } & \left\vert \left( I_{j,\limfunc{left}}^{m}\right)
^{\left( 1\right) }\right\vert _{\mu } \\ 
\left\vert \left( I_{j,\limfunc{left}}^{m}\right) ^{\left( 1\right)
}\right\vert _{\mu } & \left\vert \left( I_{j,\limfunc{left}}^{m}\right)
^{\left( 2\right) }\right\vert _{\mu }%
\end{array}%
\right] $ and $\left[ 
\begin{array}{cc}
\left\vert \left( I_{j,\limfunc{right}}^{m}\right) ^{\left( 0\right)
}\right\vert _{\mu } & \left\vert \left( I_{j,\limfunc{right}}^{m}\right)
^{\left( 1\right) }\right\vert _{\mu } \\ 
\left\vert \left( I_{j,\limfunc{right}}^{m}\right) ^{\left( 1\right)
}\right\vert _{\mu } & \left\vert \left( I_{j,\limfunc{right}}^{m}\right)
^{\left( 2\right) }\right\vert _{\mu }%
\end{array}%
\right] $ are respectively spanned by the vectors $\left( 
\begin{array}{c}
x_{I_{j,\limfunc{left}}^{m}} \\ 
1%
\end{array}%
\right) $ and $\left( 
\begin{array}{c}
x_{I_{j,\limfunc{right}}^{m}} \\ 
1%
\end{array}%
\right) $, which are independent since $I_{j,\limfunc{left}}^{m}\cap I_{j,%
\limfunc{right}}^{m}=\emptyset $. Thus only the trivial solution$\ \left( 
\begin{array}{c}
\alpha _{0}^{1} \\ 
\alpha _{1}^{1}%
\end{array}%
\right) =\left( 
\begin{array}{c}
\beta _{0}^{1} \\ 
\beta _{1}^{1}%
\end{array}%
\right) =\left( 
\begin{array}{c}
0 \\ 
0%
\end{array}%
\right) $ exists, and it follows that both $a_{I_{j}^{m}}^{\mu ,1}$ and $%
a_{I_{j}^{m}}^{\mu ,2}$ vanish in $L^{2}\left( \mu \right) $.
\end{proof}

The resulting pared collection 
\begin{equation*}
\mathcal{U}^{\limfunc{Alpert},\mu }\equiv \left\{ a_{I}^{\mu ,1},a_{I}^{\mu
,2}\right\} _{I\in \mathcal{D}}\ ,
\end{equation*}%
where $a_{I}^{\mu ,1}$ or $a_{I}^{\mu ,2}$ or both are removed according to
the lemma above, is an orthonormal basis for $L^{2}\left( \mu \right) $.

Finally, we consider the additional moment condition (C) in the case where
assertion (1) of the above lemma holds, namely when (\ref{Haar nondeg})
holds for both $I_{j,\limfunc{left}}^{m}$ and $I_{j,\limfunc{right}}^{m}$,
and (\ref{Alpert nondeg}) holds for $I_{j,\limfunc{left}}^{m}$ but fails for 
$I_{j,\limfunc{right}}^{m}$. In this case $a_{I_{j}^{m}}^{\mu
}=b_{I_{j}^{m}}^{\mu }\neq 0$ in $L^{2}\left( \mu \right) $, and this is the
only case in which there is just one Alpert function (apart from the
symmetric case when (\ref{Alpert nondeg}) holds for $I_{j,\limfunc{right}%
}^{m}$ but fails for $I_{j,\limfunc{left}}^{m}$).

From the calculations above we must have that the two vectors%
\begin{equation*}
\left( 
\begin{array}{cc}
\beta _{0}^{1} & \beta _{1}^{1}%
\end{array}%
\right) \boldsymbol{R}_{I}\boldsymbol{L}_{I}^{-1}\left\{ \boldsymbol{R}_{I}+%
\boldsymbol{L}_{I}\right\}
\end{equation*}%
and%
\begin{equation*}
-\left( 
\begin{array}{cc}
\left\vert I_{\limfunc{left}}^{\left( 2\right) }\right\vert _{\mu } & 
\left\vert I_{\limfunc{left}}^{\left( 3\right) }\right\vert _{\mu }%
\end{array}%
\right) \boldsymbol{L}_{I}^{-1}\boldsymbol{R}_{I}+\left( 
\begin{array}{cc}
\left\vert I_{\limfunc{right}}^{\left( 2\right) }\right\vert _{\mu } & 
\left\vert I_{\limfunc{right}}^{\left( 3\right) }\right\vert _{\mu }%
\end{array}%
\right)
\end{equation*}%
are parallel. But since $\boldsymbol{R}_{I}\boldsymbol{L}_{I}^{-1}\left\{ 
\boldsymbol{R}_{I}+\boldsymbol{L}_{I}\right\} $ is no longer invertible -
indeed, $\boldsymbol{R}_{I}\boldsymbol{L}_{I}^{-1}\left\{ \boldsymbol{R}_{I}+%
\boldsymbol{L}_{I}\right\} $ has rank $1$ since $\boldsymbol{R}_{I}$ does -
we cannot necessarily solve 
\begin{equation*}
\left( 
\begin{array}{cc}
\beta _{0}^{1} & \beta _{1}^{1}%
\end{array}%
\right) \left\{ \boldsymbol{R}_{I}\boldsymbol{L}_{I}^{-1}\boldsymbol{R}_{I}+%
\boldsymbol{R}_{I}\right\} =\lambda \left\{ \left( 
\begin{array}{cc}
\left\vert I_{\limfunc{right}}^{\left( 2\right) }\right\vert _{\mu } & 
\left\vert I_{\limfunc{right}}^{\left( 3\right) }\right\vert _{\mu }%
\end{array}%
\right) -\left( 
\begin{array}{cc}
\left\vert I_{\limfunc{left}}^{\left( 2\right) }\right\vert _{\mu } & 
\left\vert I_{\limfunc{left}}^{\left( 3\right) }\right\vert _{\mu }%
\end{array}%
\right) \boldsymbol{L}_{I}^{-1}\boldsymbol{R}_{I}\right\} .
\end{equation*}%
In fact, there is a solution if and only if the transpose of the vector 
\begin{equation*}
\left( 
\begin{array}{cc}
\left\vert I_{\limfunc{right}}^{\left( 2\right) }\right\vert _{\mu } & 
\left\vert I_{\limfunc{right}}^{\left( 3\right) }\right\vert _{\mu }%
\end{array}%
\right) -\left( 
\begin{array}{cc}
\left\vert I_{\limfunc{left}}^{\left( 2\right) }\right\vert _{\mu } & 
\left\vert I_{\limfunc{left}}^{\left( 3\right) }\right\vert _{\mu }%
\end{array}%
\right) \boldsymbol{L}_{I}^{-1}\boldsymbol{R}_{I}
\end{equation*}%
lies in the one-dimensional range of the matrix $R_{I}$, which in turn holds
if and only if%
\begin{equation*}
\left( 
\begin{array}{c}
\left\vert I_{\limfunc{right}}^{\left( 2\right) }\right\vert _{\mu } \\ 
\left\vert I_{\limfunc{right}}^{\left( 3\right) }\right\vert _{\mu }%
\end{array}%
\right) \in \limfunc{Range}\boldsymbol{R}_{I}\ .
\end{equation*}

\begin{remark}
The authors thank Fletcher Gates for showing them that, in the case under
consideration, this latter condition is always satisfied.
\end{remark}

\section{Application: a two weight $Tp$ Conjecture}

Using the weighted Alpert wavelet bases constructed in Theorem \ref{main1},
we can formulate an associated $Tp$-type theorem in dimension $n=1$ where
testing over indicators is replaced by testing over indicators times
polynomials of degree at most $k-1$, and the energy condition is replaced by
an associated $k$-energy condition. Unfortunately, at this point in time, we
cannot demonstrate that Conjecture \ref{TP} below produces new interesting
weighted inequalities, despite that fact that we provide an example to show
that the $k$-energy condition is strictly weaker than the usual energy
condition, even in the presence of the Muckenhoupt condition. But see
Subsection \ref{CZ test} for a demonstration that Conjecture \ref{TP}
differs `logically' from existing $T1$-type theorems in the literature.

Let $0\leq \alpha <1$, $k\in \mathbb{N}$ and $0<\delta <1$. We define a $%
\left( k+\delta \right) $-smooth $\alpha $-fractional CZ kernel $K^{\alpha
}(x,y)$ to be a real-valued function defined on $\mathbb{R}\times \mathbb{R}$
satisfying the following fractional size and smoothness conditions of order $%
1+\delta $: For $x\neq y$,%
\begin{eqnarray}
&&\left\vert K^{\alpha }\left( x,y\right) \right\vert \leq C_{CZ}\left\vert
x-y\right\vert ^{\alpha -1}\text{ and }\left\vert \nabla ^{j}K^{\alpha
}\left( x,y\right) \right\vert \leq C_{CZ}\left\vert x-y\right\vert ^{\alpha
-1-j},\ \ \ 1\leq j\leq k,  \label{sizeandsmoothness'} \\
&&\left\vert \nabla ^{k}K^{\alpha }\left( x,y\right) -\nabla ^{k}K^{\alpha
}\left( x^{\prime },y\right) \right\vert \leq C_{CZ}\left( \frac{\left\vert
x-x^{\prime }\right\vert }{\left\vert x-y\right\vert }\right) ^{\delta
}\left\vert x-y\right\vert ^{\alpha -1-k},\ \ \ \ \ \frac{\left\vert
x-x^{\prime }\right\vert }{\left\vert x-y\right\vert }\leq \frac{1}{2}, 
\notag
\end{eqnarray}%
and the last inequality also holds for the adjoint kernel in which $x$ and $%
y $ are interchanged. We associate a corresponding Calder\'{o}n-Zygmund
operator $T^{\alpha }$ is the usual way (see e.g. \cite{LaWi} or \cite%
{SaShUr7}).

The following conjectured $Tp$ theorem with an energy side condition differs
from the corresponding $T1$ theorem with an energy side condition in two
ways:

\begin{enumerate}
\item Because of the stronger moment conditions satisfied Alpert wavelets,
the usual energy condition assumption from the $T1$ theorem can be weakened
in the $Tp$ theorem.

\item Due to the weaker telescoping identities satisfied by Alpert wavelets,
the $T1$ testing conditions must be strengthened to testing polynomials
times indicators.
\end{enumerate}

\begin{conjecture}
\label{TP}Let $0\leq \alpha <1$ and $0<\delta <1$. Let $T^{\alpha }$ be a $%
\left( k+\delta \right) $-smooth $\alpha $-fractional Calder\'{o}n-Zygmund
operator on the real line. Suppose $\sigma $ and $\omega $ are locally
finite positive Borel measures on $\mathbb{R}$ that satisfy the $k$-energy
condition%
\begin{equation}
\left( \mathcal{E}_{2,k}^{\alpha }\right) ^{2}\equiv \sup_{I=\dot{\cup}%
_{r=1}^{\infty }I_{r}}\frac{1}{\left\vert I\right\vert _{\sigma }}%
\sum_{r=1}^{\infty }\left( \frac{\mathrm{P}_{k}^{\alpha }\left( I_{r},%
\mathbf{1}_{I}\sigma \right) }{\left\vert I_{r}\right\vert ^{k}}\right)
^{2}\left\Vert \left( x-m_{I_{r}}^{k}\right) ^{k}\right\Vert _{L^{2}\left( 
\mathbf{1}_{I_{r}}\omega \right) }^{2},  \label{k energy}
\end{equation}%
as well as the dual $k$-energy condition obtained by interchanging the
measures $\sigma $ and $\omega $. Then the operator $T_{\sigma }^{\alpha
}f\equiv T^{\alpha }\left( f\sigma \right) $ is bounded from $L^{2}\left(
\sigma \right) $ to $L^{2}\left( \omega \right) $ (in the sense that tangent
line truncations are uniformly bounded by a constant $\mathfrak{N}%
_{T^{\alpha }}\left( \sigma ,\omega \right) $) if

\begin{enumerate}
\item there is a positive constant $\mathfrak{T}_{T^{\alpha }}^{k}\left(
\sigma ,\omega \right) $ such that 
\begin{equation*}
\int_{Q}\left\vert T_{\sigma }^{\alpha }\mathbf{1}_{Q}p\right\vert
^{2}d\omega \leq \left( \mathfrak{T}_{T^{\alpha }}^{k}\left( \sigma ,\omega
\right) \right) ^{2}\int_{Q}\left\vert p\right\vert ^{2}d\sigma ,
\end{equation*}%
for all intervals $Q$ and polynomials $p\left( x\right)
=c_{0}+c_{1}x+...+c_{k-1}x^{k-1}$ of degree at most $k-1$, as well as the
dual testing conditions obtained by interchanging the measures $\sigma $ and 
$\omega $ and replacing $T_{\sigma }^{\alpha }$ with its dual $T_{\omega
}^{\alpha }$; and

\item the fractional Muckenhoupt condition is finite: $\mathfrak{A}%
_{2}^{\alpha }\left( \sigma ,\omega \right) <\infty $.
\end{enumerate}
\end{conjecture}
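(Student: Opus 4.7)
The plan is to adapt the Nazarov--Treil--Volberg corona decomposition together with the Lacey--Sawyer--Shen--Uriarte-Tuero stopping-time machinery used in the $T1$ theorem for Calder\'{o}n-Zygmund operators, replacing the Haar basis throughout by the weighted Alpert basis $\{a_{I}^{\sigma,\ell}\}$ on the source side and $\{a_{J}^{\omega,\ell}\}$ on the target side. Using Theorem \ref{main1} to expand $f\in L^{2}(\sigma)$ and $g\in L^{2}(\omega)$ and absorbing the polynomial parts via the testing hypothesis, we reduce matters to estimating the bilinear form
\begin{equation*}
\langle T_{\sigma}^{\alpha}f,g\rangle_{\omega}=\sum_{I,J\in\mathcal{D}}\sum_{\ell,\ell'=1}^{k}\langle f,a_{I}^{\sigma,\ell}\rangle_{\sigma}\,\langle g,a_{J}^{\omega,\ell'}\rangle_{\omega}\,\langle T_{\sigma}^{\alpha}a_{I}^{\sigma,\ell},a_{J}^{\omega,\ell'}\rangle_{\omega}.
\end{equation*}

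First I would run the standard Nazarov--Treil--Volberg good/bad random grid reduction so that only ``good'' pairs $(I,J)$ need be estimated, and then split the sum according to the relative position of $I$ and $J$: the disjoint configuration, the comparable configuration ($2^{-r}\ell(J)\leq\ell(I)\leq 2^{r}\ell(J)$ with bounded separation), and the two deeply nested configurations $J\Subset I$ and $I\Subset J$. Disjoint and comparable terms are handled directly by the Muckenhoupt $\mathfrak{A}_{2}^{\alpha}$ condition combined with the $L^{\infty}$ bounds and $k$-fold vanishing moments of the Alpert functions, which yield $k$-th order Poisson decay in the off-diagonal kernel estimates. The comparable term needs the polynomial testing hypothesis $\mathfrak{T}_{T^{\alpha}}^{k}$ to swallow the non-cancellative contributions where $I$ and $J$ touch.

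For the nested case $J\Subset I$ (the dual case $I\Subset J$ being symmetric), the point of upgrading to $k$ vanishing moments is to sharpen the Taylor expansion: since $a_{J}^{\omega,\ell'}$ is orthogonal to all polynomials of degree at most $k-1$ in $L^{2}(\omega)$,
\begin{equation*}
\langle T_{\sigma}^{\alpha}a_{I}^{\sigma,\ell},a_{J}^{\omega,\ell'}\rangle_{\omega}=\langle T_{\sigma}^{\alpha}a_{I}^{\sigma,\ell}-P_{J;k-1}^{\omega}T_{\sigma}^{\alpha}a_{I}^{\sigma,\ell},a_{J}^{\omega,\ell'}\rangle_{\omega},
\end{equation*}
where $P_{J;k-1}^{\omega}$ is the orthogonal projection onto polynomials of degree $<k$ in $L^{2}(\mathbf{1}_{J}\omega)$. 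Expanding the kernel $K^{\alpha}(x,\cdot)$ to order $k$ around $m_{J}^{k}$ produces exactly the higher-order Poisson integral $\mathrm{P}_{k}^{\alpha}(J,\mathbf{1}_{I}\sigma)$ from (\ref{k energy}) multiplied by $\|(x-m_{J}^{k})^{k}\|_{L^{2}(\mathbf{1}_{J}\omega)}$, so the natural Schur bound in this regime is controlled by $\mathcal{E}_{2,k}^{\alpha}$ (after an $\mathfrak{A}_{2}^{\alpha}$-absorbing corona construction on $\sigma$). The paraproduct, neighbour and commutator remainders produced by the stopping-time construction are then absorbed by $\mathfrak{T}_{T^{\alpha}}^{k}$, since the telescoping identity (\ref{telescoping}) now averages onto polynomials rather than constants --- forcing precisely the strengthening of the testing condition from $\mathbf{1}_{Q}$ to $\mathbf{1}_{Q}p$ of degree $\leq k-1$ that the conjecture demands.

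The hard part, and the reason that the authors state the result only as a conjecture, is the functional-energy step. In the classical $T1$ theorem the energy condition is tailored to the Haar geometry, and a key lemma converts testing plus energy into a square-function bound for the projections of $T_{\sigma}^{\alpha}\mathbf{1}_{F}$ onto the Haar wavelets supported in stopping children. With Alpert wavelets, the relevant analogue would require a functional $k$-energy estimate with $\mathcal{E}_{2,k}^{\alpha}$ in place of the usual energy $\mathcal{E}_{2}^{\alpha}$, and since $\mathcal{E}_{2,k}^{\alpha}$ is genuinely weaker (as the promised example in the paper shows) this cannot follow formally from the Haar-based proof. Establishing this functional $k$-energy inequality --- presumably through a corona construction that exploits the $k$-th order polynomial reproducing property of the multiresolution projections $\mathbb{E}_{Q;k}^{\mu}$ --- is, I expect, the principal obstacle to converting the sketch above into a theorem.
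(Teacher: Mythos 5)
The statement you are proving is stated in the paper only as a conjecture: the authors give a seven-step proof outline following \cite{NTV4}, \cite{LaSaShUr3}, \cite{Lac}, \cite{LaWi}, \cite{SaShUr7}, and explicitly concede at the end of their Step 7 that they ``do not have a proof of Conjecture \ref{TP}.'' Your sketch follows essentially the same route as their outline --- Alpert expansion of the bilinear form, good/bad random-grid reduction, splitting by relative position of $I$ and $J$, the order-$k$ Taylor expansion yielding the $k$-Poisson kernel $\mathrm{P}_{k}^{\alpha}$ and hence the $k$-energy control (their Monotonicity Lemma \ref{mono}), and absorption of the telescoped polynomial averages by the strengthened testing condition $\mathfrak{T}_{T^{\alpha}}^{k}$ --- and, like theirs, it is a proof strategy rather than a proof. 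You are right to flag that the argument is incomplete, though you and the authors locate the obstruction in slightly different places: you point to the functional $k$-energy inequality, whereas the paper identifies the difficulty as controlling the paraproduct terms arising in the local part of Lacey's bottom/up stopping-time recursion (their Step 7); the global part is what they propose to handle via $k$-Poisson testing in Step 6. Either way, no step of your sketch should be represented as established, and the gap you would need to close is precisely the one the authors could not.
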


A proof of this conjecture would follow the somewhat standard lines of proof
for $T1$-type theorems already in the literature (\cite{NTV4}, \cite%
{LaSaShUr3}, \cite{Lac}, \cite{Hyt2}), namely an inner product $\left\langle
T_{\sigma }^{\alpha }f,g\right\rangle _{L^{2}\left( \omega \right) }$ is
expanded in Alpert projections as%
\begin{equation}
\left\langle T_{\sigma }^{\alpha }f,g\right\rangle _{L^{2}\left( \omega
\right) }=\sum_{\substack{ I\in \mathcal{D} \\ 1\leq \ell \leq k}}\sum
_{\substack{ J\in \mathcal{G} \\ 1\leq \ell ^{\prime }\leq k^{\prime }}}%
\left\langle T_{\sigma }^{\alpha }\bigtriangleup _{I}^{\sigma ,\ell
}f,\bigtriangleup _{J}^{\omega ,\ell ^{\prime }}g\right\rangle _{L^{2}\left(
\omega \right) }\ ,  \label{inner pdt sum}
\end{equation}%
and then decomposed into many separate infinite sums according to the
relative sizes, locations and goodness of the intervals $I$ and $J$, which
are then all controlled differently. We first highlight the two main points
of departure in controlling these different sums, followed by a brief
description of the sums themselves and how they are handled in the $Tp\,$%
situation, as well as pointing to an obstruction to the proof.

\begin{itemize}
\item The estimate for norms of Alpert projections$\left\Vert \bigtriangleup
_{J}^{\omega }T^{\alpha }\mu \right\Vert _{L^{2}\left( \omega \right) }^{2}$%
, called the Monotonicity Lemma below and in \cite{LaWi}, \cite{SaShUr7}, is
improved by the extra vanishing moments of Alpert wavelets to 
\begin{equation*}
\left\Vert \bigtriangleup _{J}^{\omega }T^{\alpha }\mu \right\Vert
_{L^{2}\left( \omega \right) }^{2}\lesssim \left( \frac{\mathrm{P}%
_{k}^{\alpha }\left( J,\mu \right) }{\left\vert J\right\vert ^{k}}\right)
^{2}\left\Vert \left( x-m_{J}^{k}\right) ^{k}\right\Vert _{L^{2}\left( 
\mathbf{1}_{J}\omega \right) }^{2}\ ,
\end{equation*}%
which in turn can then be controlled by the $k$-energy condition (\ref{k
energy}) above, \textbf{weaker} than the usual energy condition with $k=1$;

\item The telescoping identities (\ref{telescoping}) reduce sums of
consecutive Alpert projections $\bigtriangleup _{I;k}^{\mu }$ to projections 
$\mathbb{E}_{Q;k}^{\mu }$ onto spaces of polynomials of degree at most $k-1$%
, thus requiring the use of \textbf{stronger} testing conditions, taken
locally over polynomials of degree at most $k-1$, in order to the bound the
consecutive sums of Alpert projections that arise in the paraproduct and
stopping forms in \cite{LaWi} and \cite{SaShUr7}.
\end{itemize}

With these two changes in mind we can now review the main steps in the
standard proof strategy for the interested reader, whom we alert to the fact
that we are using here the formulation of the Lacey-Wick monotonicity lemma
with an error term in \cite{LaWi} and \cite{SaShUr7}, as opposed to the
stronger formulation used in \cite{LaSaShUr3} and \cite{Lac} that exploited
special properties of the Haar basis to hide the error term. As a
consequence, the reader can follow the broad outline of the one-dimensional
proof in \cite{LaSaShUr3} and \cite{Lac}, but handling the error terms as in 
\cite{LaWi} and/or \cite{SaShUr7} (see also \cite{SaShUr9}, \cite{SaShUr10}%
). We ignore the case of common point masses, and refer the reader instead
to \cite{Hyt2}, \cite{LaWi} and \cite{SaShUr10}.

\begin{description}
\item[Step 1] Using the random grids of Nazarov, Treil and Volberg, a
reduction is made to good functions $f$ and $g$, i.e. those whose wavelet
expansions involve only cubes from one grid that are \emph{good} with
respect to the other grid. The orthogonality of Alpert projections plays a
key role here.

\item[Step 2] Using the testing conditions, one further restricts the
supports of $f$ and $g$ to a finite union of large cubes.

\item[Step 3] Then one can implement corona constructions with Calder\'{o}%
n-Zygmund stopping times on the averages of $f$, and with $k$-energy
stopping times in place of the familiar energy stopping times.

\item[Step 4] The sum of inner products in (\ref{inner pdt sum}) is then
grouped into coronas relative to these stopping times and further decomposed
into global, local and error pieces.

\item[Step 5] The error pieces are handled by NTV methods from \cite{NTV4}.

\item[Step 6] The global inner products are controlled by the $k$-Poisson
operator as in \cite{LaWi} and/or \cite{SaShUr7}, which in turn has its norm
inequality controlled by $k$-Poisson testing conditions. This latter result
is proved in the same way as is done for the familiar Poisson operator.

\item[Step 7] The local terms are handled by Lacey's bottom/up stopping time
and recursion as in \cite{Lac}, with error terms from the monotonicity lemma
handled as in \cite{LaWi} and/or \cite{SaShUr7}. The difficulty lies in
using the Nazarov, Treil, Volberg method connecting back to the appropriate
paraproduct terms; at the moment we are unable to control those terms and so
do not have a proof of Conjecture \ref{TP}.
\end{description}

\begin{remark}
There is an analogous conjecture in higher dimensions following the
arguments in \cite{LaWi} and/or \cite{SaShUr7}, but we will not pursue this.
\end{remark}

\subsection{The Monotonicity Lemma}

For $0\leq \alpha <1$ and $m\in \mathbb{R}_{+}$, we recall the $m$-weighted
fractional Poisson integral%
\begin{equation*}
\mathrm{P}_{m}^{\alpha }\left( J,\mu \right) \equiv \int_{\mathbb{R}}\frac{%
\left\vert J\right\vert ^{m}}{\left( \left\vert J\right\vert +\left\vert
y-c_{J}\right\vert \right) ^{m+1-\alpha }}d\mu \left( y\right) ,
\end{equation*}%
where $\mathrm{P}_{1}^{\alpha }\left( J,\mu \right) =\mathrm{P}^{\alpha
}\left( J,\mu \right) $ is the standard Poisson integral.

\begin{lemma}[Monotonicity]
\label{mono}Suppose that$\ I$ and $J$ are cubes in $\mathbb{R}$ such that $%
J\subset 2J\subset I$, and that $\mu $ is a signed measure on $\mathbb{R}$
supported outside $I$. Finally suppose that $T^{\alpha }$ is a standard
fractional singular integral on $\mathbb{R}$ with kernel $K^{\alpha }\left(
x,y\right) =K_{y}^{\alpha }\left( x\right) $, $0<\alpha <1$. Then there is a
positive constant $C_{\alpha }$ such that%
\begin{equation}
\Phi ^{\alpha }\left( J,\mu \right) ^{2}-C_{\alpha }\Psi ^{\alpha }\left(
J,\left\vert \mu \right\vert \right) ^{2}\leq \left\Vert \bigtriangleup
_{J}^{\omega }T^{\alpha }\mu \right\Vert _{L^{2}\left( \omega \right)
}^{2}\lesssim \Phi ^{\alpha }\left( J,\mu \right) ^{2}+C_{\alpha }\Psi
^{\alpha }\left( J,\left\vert \mu \right\vert \right) ^{2},  \label{estimate}
\end{equation}%
where for a measure $\nu $,%
\begin{eqnarray*}
&&\Phi ^{\alpha }\left( J,\nu \right) ^{2}\equiv \left\vert \frac{1}{k!}\int
\left( K_{y}^{\alpha }\right) ^{\left( k\right) }\left( m_{J}\right) d\mu
\left( y\right) \right\vert ^{2}\left\Vert \bigtriangleup _{J}^{\omega
}x^{k}\right\Vert _{L^{2}\left( \omega \right) }^{2}\ , \\
&&\Psi ^{\alpha }\left( J,\nu \right) ^{2}\equiv \left( \frac{\mathrm{P}%
_{k+\delta }^{\alpha }\left( J,\nu \right) }{\left\vert J\right\vert ^{k}}%
\right) ^{2}\left\Vert \left( x-m_{J}^{k}\right) ^{k}\right\Vert
_{L^{2}\left( \mathbf{1}_{J}\omega \right) }^{2}\ , \\
&&\text{where }m_{J}^{k}\in J\text{ satisfies }\left\Vert \left(
x-m_{J}^{k}\right) ^{k}\right\Vert _{L^{2}\left( \mathbf{1}_{J}\omega
\right) }^{2}=\inf_{c\in J}\left\Vert \left( x-c\right) ^{k}\right\Vert
_{L^{2}\left( \mathbf{1}_{J}\omega \right) }^{2}.
\end{eqnarray*}%
and where if $\nu $ is a positive measure, then there are positive constants 
$c,C$ such that%
\begin{equation*}
c\Phi ^{\alpha }\left( J,\nu \right) ^{2}\leq \left( \frac{\mathrm{P}%
_{k}^{\alpha }\left( J,\nu \right) }{\left\vert J\right\vert ^{k}}\right)
^{2}\left\Vert \bigtriangleup _{J}^{\omega }x^{k}\right\Vert _{L^{2}\left(
\omega \right) }^{2}\leq C\Phi ^{\alpha }\left( J,\nu \right) ^{2}\ .
\end{equation*}
\end{lemma}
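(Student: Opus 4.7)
The plan is to combine a Taylor expansion of the kernel $K^{\alpha}$ in its first variable with the defining vanishing moments of the Alpert projection $\bigtriangleup_{J}^{\omega}$. Since $\bigtriangleup_{J}^{\omega}$ is orthogonal projection onto $L_{J;k}^{2}(\omega)$ and every element of this space is $L^{2}(\omega)$-orthogonal to $\mathbf{1}_{J}(x)x^{j}$ for $0\le j\le k-1$, one has $\bigtriangleup_{J}^{\omega}p=0$ for every polynomial $p$ of degree at most $k-1$. This immediately kills the first $k$ terms of a $k$-th order Taylor polynomial of $K^{\alpha}(x,y)$ about a center $c\in J$ and leaves only the $k$-th order contribution and a Lagrange remainder.

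Concretely, for $x\in J$ and $y\notin I$ I would expand
\[
K^{\alpha}(x,y)=\sum_{j=0}^{k-1}\tfrac{1}{j!}\bigl(K_{y}^{\alpha}\bigr)^{(j)}(c)(x-c)^{j}+\tfrac{1}{k!}\bigl(K_{y}^{\alpha}\bigr)^{(k)}(c)(x-c)^{k}+E(x,y),
\]
choosing $c=m_{J}^{k}$ so that the pointwise remainder estimate, obtained from the H\"{o}lder smoothness $|(K_{y}^{\alpha})^{(k)}(x)-(K_{y}^{\alpha})^{(k)}(c)|\le C_{CZ}(|x-c|/|y-c|)^{\delta}|y-c|^{\alpha-1-k}$, takes the shape $|E(x,y)|\le C|x-m_{J}^{k}|^{k}|J|^{\delta}/(|J|+|y-c_{J}|)^{k+\delta+1-\alpha}$ for $x\in J$ and $y\notin I$, using $|y-m_{J}^{k}|\approx|J|+|y-c_{J}|$ since $J\subset 2J\subset I$. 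Integrating against $d|\mu|$ recovers exactly $\mathrm{P}_{k+\delta}^{\alpha}(J,|\mu|)/|J|^{k}$, and since $\bigtriangleup_{J}^{\omega}$ is a contraction on $L^{2}(\omega)$ we obtain $\|\bigtriangleup_{J}^{\omega}E_{\mu}\|_{L^{2}(\omega)}\lesssim\Psi^{\alpha}(J,|\mu|)$. Integrating the $k$-th order term against $d\mu(y)$ and applying $\bigtriangleup_{J}^{\omega}$ yields $\bigl[\tfrac{1}{k!}\!\int(K_{y}^{\alpha})^{(k)}(m_{J}^{k})\,d\mu(y)\bigr]\bigtriangleup_{J}^{\omega}x^{k}$, using $\bigtriangleup_{J}^{\omega}(x-m_{J}^{k})^{k}=\bigtriangleup_{J}^{\omega}x^{k}$ after discarding the annihilated lower-degree polynomial part. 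The difference between this and the stated main term centered at $m_{J}$ is $\int[(K_{y}^{\alpha})^{(k)}(m_{J})-(K_{y}^{\alpha})^{(k)}(m_{J}^{k})]\,d\mu(y)$, itself of order $\mathrm{P}_{k+\delta}^{\alpha}(J,|\mu|)/|J|^{k}$ by the same H\"{o}lder bound, and hence absorbable into $\Psi^{\alpha}(J,|\mu|)$ after multiplication by $\|\bigtriangleup_{J}^{\omega}x^{k}\|_{L^{2}(\omega)}\le\|(x-m_{J}^{k})^{k}\|_{L^{2}(\mathbf{1}_{J}\omega)}$.

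Writing $\bigtriangleup_{J}^{\omega}T^{\alpha}\mu=M+R$ with $\|M\|_{L^{2}(\omega)}=\Phi^{\alpha}(J,\mu)$ and $\|R\|_{L^{2}(\omega)}\lesssim\Psi^{\alpha}(J,|\mu|)$, the identity $\|M+R\|^{2}=\|M\|^{2}+2\operatorname{Re}\langle M,R\rangle+\|R\|^{2}$ together with $|2\langle M,R\rangle|\le\epsilon\|M\|^{2}+\epsilon^{-1}\|R\|^{2}$ produces both the upper estimate $\|\bigtriangleup_{J}^{\omega}T^{\alpha}\mu\|^{2}\lesssim\Phi^{\alpha}(J,\mu)^{2}+C_{\alpha}\Psi^{\alpha}(J,|\mu|)^{2}$ and the lower estimate $\Phi^{\alpha}(J,\mu)^{2}-C_{\alpha}\Psi^{\alpha}(J,|\mu|)^{2}\le\|\bigtriangleup_{J}^{\omega}T^{\alpha}\mu\|^{2}$ after rescaling constants.

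The final comparability for positive $\nu$ requires $\bigl|\tfrac{1}{k!}\!\int(K_{y}^{\alpha})^{(k)}(m_{J})\,d\nu(y)\bigr|\approx\mathrm{P}_{k}^{\alpha}(J,\nu)/|J|^{k}$. The upper direction is immediate from the pointwise size bound $|(K_{y}^{\alpha})^{(k)}(m_{J})|\le C_{CZ}|y-m_{J}|^{\alpha-1-k}$ together with $|y-m_{J}|\approx|J|+|y-c_{J}|$ for $y\notin I$. The matching lower bound is the real obstacle: $(K_{y}^{\alpha})^{(k)}(m_{J})$ can a priori change sign as $y$ varies, so cancellation must be excluded by exploiting the sign coherence inherent to the CZ kernel (for instance, for one-dimensional convolution-type $K^{\alpha}$, the sign of $(K_{y}^{\alpha})^{(k)}(m_{J})$ depends only on which side of $J$ the point $y$ lies) combined with positivity of $\nu$. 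This kernel-specific step is genuinely more delicate than the Taylor/projection mechanism driving the rest of the argument, and would be the main point requiring care.
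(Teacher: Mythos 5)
Your proposal follows essentially the same route as the paper's proof: a $k$-th order Taylor expansion of $K_{y}^{\alpha }$ about a point of $J$, annihilation of the degree $\leq k-1$ Taylor polynomial by the vanishing moments of $\bigtriangleup _{J}^{\omega }$, and control of the Lagrange remainder by the H\"{o}lder smoothness hypothesis, which produces exactly the $\mathrm{P}_{k+\delta }^{\alpha }$ Poisson factor in $\Psi ^{\alpha }$, followed by the elementary expansion of $\left\Vert M+R\right\Vert ^{2}$. The one point you flag as genuinely delicate --- the lower bound in the final comparability $\left\vert \frac{1}{k!}\int \left( K_{y}^{\alpha }\right) ^{\left( k\right) }\left( m_{J}\right) d\nu \left( y\right) \right\vert \gtrsim \mathrm{P}_{k}^{\alpha }\left( J,\nu \right) /\left\vert J\right\vert ^{k}$ for positive $\nu $, which requires excluding cancellation in the integrand --- is likewise only asserted (as an unproved $\approx $) at the end of the paper's argument, so your treatment is no less complete than the original.
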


\begin{remark}
The right hand side of (\ref{estimate}) is what determines the definition of
the $k$-energy condition (\ref{k energy}) used in the stopping time
arguments adapted from \cite{LaWi} and/or \cite{SaShUr7}.
\end{remark}

\begin{proof}[Proof of Lemma \protect\ref{mono}]
The proof is an easy adaptation of the proofs in \cite{LaWi} and \cite%
{SaShUr7} restricted to dimension $n=1$, but using an order $k$ Taylor
expansion instead of an order $1$ expansion on the kernel $\left(
K_{y}^{\alpha }\right) \left( x\right) =K^{\alpha }\left( x,y\right) $. Due
to the importance of this lemma, as explained in the above remark, we give
the short argument.

Let $\left\{ h_{J}^{\omega ,a}\right\} _{a\in \Gamma }$ be an orthonormal
basis of $L_{J;k}^{2}\left( \mu \right) $ consisting of Alpert functions as
above. Now we use the $\left( k+\delta \right) $-smooth Calder\'{o}n-Zygmund
smoothness estimate (\ref{sizeandsmoothness'}), together with Taylor's
formula 
\begin{eqnarray*}
K_{y}^{\alpha }\left( x\right) &=&T\left( K_{y}^{\alpha }\right) \left(
x,c\right) +\frac{1}{k!}\left( K_{y}^{\alpha }\right) ^{\left( k\right)
}\left( \theta \left( x,c\right) \right) \left( x-c\right) ^{k}; \\
\limfunc{Tay}\left( K_{y}^{\alpha }\right) \left( x,c\right) &\equiv
&K_{y}^{\alpha }\left( c\right) +\left( K_{y}^{\alpha }\right) ^{\prime
}\left( c\right) \left( x-c\right) +...+\frac{1}{\left( k-1\right) !}\left(
K_{y}^{\alpha }\right) ^{\left( k-1\right) }\left( c\right) \left(
x-c\right) ^{k-1},
\end{eqnarray*}%
and the vanishing means of the Alpert functions $h_{J}^{\omega ,a}$, to
obtain 
\begin{eqnarray*}
\left\langle T^{\alpha }\mu ,h_{J}^{\omega ,a}\right\rangle _{\omega }
&=&\int \left\{ \int K^{\alpha }\left( x,y\right) h_{J}^{\omega ,a}\left(
x\right) d\omega \left( x\right) \right\} d\mu \left( y\right) =\int
\left\langle K_{y}^{\alpha },h_{J}^{\omega ,a}\right\rangle _{\omega }d\mu
\left( y\right) \\
&=&\int \left\langle K_{y}^{\alpha }\left( x\right) -\limfunc{Tay}\left(
K_{y}^{\alpha }\right) \left( x,m_{J}^{k}\right) ,h_{J}^{\omega ,a}\left(
x\right) \right\rangle _{\omega }d\mu \left( y\right) \\
&=&\int \left\langle \frac{1}{k!}\left( K_{y}^{\alpha }\right) ^{\left(
k\right) }\left( \theta \left( x,m_{J}^{k}\right) \right) \left(
x-m_{J}^{k}\right) ^{k},h_{J}^{\omega ,a}\left( x\right) \right\rangle
_{\omega }d\mu \left( y\right) \\
&=&\left\langle \left[ \int \frac{1}{k!}\left( K_{y}^{\alpha }\right)
^{\left( k\right) }\left( m_{J}\right) d\mu \left( y\right) \right] \left(
x-m_{J}^{k}\right) ^{k},h_{J}^{\omega ,a}\right\rangle _{\omega } \\
&&+\left\langle \left[ \int \frac{1}{k!}\left[ \left( K_{y}^{\alpha }\right)
^{\left( k\right) }\left( \theta \left( x,m_{J}^{k}\right) \right) -\left(
K_{y}^{\alpha }\right) ^{\left( k\right) }\left( m_{J}^{k}\right) \right]
d\mu \left( y\right) \right] \left( x-m_{J}^{k}\right) ^{k},h_{J}^{\omega
,a}\right\rangle _{\omega } \\
&=&\left[ \frac{1}{k!}\int \left( K_{y}^{\alpha }\right) ^{\left( k\right)
}\left( m_{J}\right) d\mu \left( y\right) \right] \left\langle
x^{k},h_{J}^{\omega ,a}\right\rangle _{\omega } \\
&&+\left\langle \left[ \int \frac{1}{k!}\left[ \left( K_{y}^{\alpha }\right)
^{\left( k\right) }\left( \theta \left( x,m_{J}^{k}\right) \right) -\left(
K_{y}^{\alpha }\right) ^{\left( k\right) }\left( m_{J}^{k}\right) \right]
d\mu \left( y\right) \right] \left( x-m_{J}^{k}\right) ^{k},h_{J}^{\omega
,a}\right\rangle _{\omega }
\end{eqnarray*}%
and hence%
\begin{eqnarray*}
&&\left\vert \left\langle T^{\alpha }\mu ,h_{J}^{\omega ,a}\right\rangle
_{\omega }-\left[ \frac{1}{k!}\int \left( K_{y}^{\alpha }\right) ^{\left(
k\right) }\left( m_{J}\right) d\mu \left( y\right) \right] \left\langle
x^{k},h_{J}^{\omega ,a}\right\rangle _{\omega }\right\vert \\
&\leq &\frac{1}{k!}\left\vert \left\langle \left[ \int \sup_{\theta \in
J}\left\vert \left( K_{y}^{\alpha }\right) ^{\left( k\right) }\left( \theta
\right) -\left( K_{y}^{\alpha }\right) ^{\left( k\right) }\left(
m_{J}^{k}\right) \right\vert d\mu \left( y\right) \right] \left\vert
x-m_{J}^{k}\right\vert ^{k},\left\vert h_{J}^{\omega ,a}\right\vert
\right\rangle _{\omega }\right\vert \\
&\lesssim &C_{CZ}\frac{\mathrm{P}_{k+\delta }^{\alpha }\left( J,\left\vert
\mu \right\vert \right) }{\left\vert J\right\vert ^{k}}\left\Vert \left(
x-m_{J}^{k}\right) ^{k}\right\Vert _{L^{2}\left( \mathbf{1}_{J}\omega
\right) }
\end{eqnarray*}%
where in the last line we have used%
\begin{eqnarray*}
&&\int \sup_{\theta \in J}\left\vert \left( K_{y}^{\alpha }\right) ^{\left(
k\right) }\left( \theta \right) -\left( K_{y}^{\alpha }\right) ^{\left(
k\right) }\left( m_{J}^{k}\right) \right\vert d\mu \left( y\right) \\
&\lesssim &C_{CZ}\int \left( \frac{\left\vert J\right\vert }{\left\vert
y-c_{J}\right\vert }\right) ^{\delta }\frac{d\left\vert \mu \right\vert
\left( y\right) }{\left\vert y-c_{J}\right\vert ^{k+1-\alpha }}=C_{CZ}\frac{%
\mathrm{P}_{k+\delta }^{\alpha }\left( J,\mu \right) }{\left\vert
J\right\vert ^{k}}.
\end{eqnarray*}

Thus we have%
\begin{eqnarray*}
\left\Vert \bigtriangleup _{J}^{\omega }T^{\alpha }\mu \right\Vert
_{L^{2}\left( \omega \right) }^{2} &=&\sum_{a\in \Gamma \left( J\right)
}\left\vert \left\langle T^{\alpha }\mu ,h_{J}^{\omega ,a}\right\rangle
_{\omega }\right\vert ^{2} \\
&=&\left\vert \frac{1}{k!}\int \left( K_{y}^{\alpha }\right) ^{\left(
k\right) }\left( m_{J}\right) d\mu \left( y\right) \right\vert
^{2}\sum_{a\in \Gamma \left( J\right) }\left\vert \left\langle
x^{k},h_{J}^{\omega ,a}\right\rangle _{\omega }\right\vert ^{2} \\
&&+O\left( \frac{\mathrm{P}_{k+\delta }^{\alpha }\left( J,\mu \right) }{%
\left\vert J\right\vert ^{k}}\right) ^{2}\left\Vert \left(
x-m_{J}^{k}\right) ^{k}\right\Vert _{L^{2}\left( \mathbf{1}_{J}\omega
\right) }^{2},
\end{eqnarray*}%
and hence%
\begin{eqnarray*}
&&c_{1}\left( \frac{1}{k!}\int \left( K_{y}^{\alpha }\right) ^{\left(
k\right) }\left( m_{J}\right) d\mu \left( y\right) \right) ^{2}\left\Vert
\bigtriangleup _{J}^{\omega }x^{k}\right\Vert _{L^{2}\left( \omega \right)
}^{2}-C_{2}\left( \frac{\mathrm{P}_{k+\delta }^{\alpha }\left( J,\left\vert
\mu \right\vert \right) }{\left\vert J\right\vert ^{k}}\right)
^{2}\left\Vert \left( x-m_{J}^{k}\right) ^{k}\right\Vert _{L^{2}\left( 
\mathbf{1}_{J}\omega \right) }^{2} \\
&\leq &\left\Vert \bigtriangleup _{J}^{\omega }T^{\alpha }\mu \right\Vert
_{L^{2}\left( \omega \right) }^{2} \\
&\leq &C_{1}\left( \frac{1}{k!}\int \left( K_{y}^{\alpha }\right) ^{\left(
k\right) }\left( m_{J}\right) d\mu \left( y\right) \right) ^{2}\left\Vert
\bigtriangleup _{J}^{\omega }x^{k}\right\Vert _{L^{2}\left( \omega \right)
}^{2}+C_{2}\left( \frac{\mathrm{P}_{k+\delta }^{\alpha }\left( J,\left\vert
\mu \right\vert \right) }{\left\vert J\right\vert ^{k}}\right)
^{2}\left\Vert \left( x-m_{J}^{k}\right) ^{k}\right\Vert _{L^{2}\left( 
\mathbf{1}_{J}\omega \right) }^{2}\ ,
\end{eqnarray*}%
where%
\begin{equation*}
\left\vert \frac{1}{k!}\int \left( K_{y}^{\alpha }\right) ^{\left( k\right)
}\left( m_{J}\right) d\mu \left( y\right) \right\vert \approx \frac{\mathrm{P%
}_{k}^{\alpha }\left( J,\mu \right) }{\left\vert J\right\vert ^{k}}\ .
\end{equation*}
\end{proof}

\subsection{Comparison of $k$-energy and the usual $1$-energy}

We can write%
\begin{equation*}
\left( \frac{\mathrm{P}_{k}^{\alpha }\left( J,\mathbf{1}_{I}\sigma \right) }{%
\left\vert J\right\vert ^{k}}\right) ^{2}\left\Vert \left(
x-m_{J}^{k}\right) ^{k}\right\Vert _{L^{2}\left( \mathbf{1}_{J}\omega
\right) }^{2}=\mathrm{P}_{k}^{\alpha }\left( J,\mathbf{1}_{I}\sigma \right)
^{2}\left\Vert \left( \frac{x-m_{J}^{k}}{\left\vert J\right\vert }\right)
^{k}\right\Vert _{L^{2}\left( \mathbf{1}_{J}\omega \right) }^{2}\ ,
\end{equation*}%
and clearly we have the inequalities%
\begin{eqnarray*}
\mathrm{P}_{k}^{\alpha }\left( J,\mathbf{1}_{I}\sigma \right) &=&\int_{%
\mathbb{R}}\frac{\left\vert J\right\vert ^{k}}{\left( \left\vert
J\right\vert +\left\vert y-c_{J}\right\vert \right) ^{k+1-\alpha }}d\sigma
\left( y\right) \\
&=&\int_{\mathbb{R}}\left( \frac{\left\vert J\right\vert }{\left( \left\vert
J\right\vert +\left\vert y-c_{J}\right\vert \right) }\right) ^{k-\ell }\frac{%
\left\vert J\right\vert ^{\ell }}{\left( \left\vert J\right\vert +\left\vert
y-c_{J}\right\vert \right) ^{\ell +1-\alpha }}d\sigma \left( y\right) \\
&\leq &\int_{\mathbb{R}}\frac{\left\vert J\right\vert ^{\ell }}{\left(
\left\vert J\right\vert +\left\vert y-c_{J}\right\vert \right) ^{\ell
+1-\alpha }}d\sigma \left( y\right) =\mathrm{P}_{\ell }^{\alpha }\left( J,%
\mathbf{1}_{I}\sigma \right) ,
\end{eqnarray*}%
and 
\begin{eqnarray*}
\left\Vert \left( \frac{x-m_{J}^{k}}{\left\vert J\right\vert }\right)
^{k}\right\Vert _{L^{2}\left( \mathbf{1}_{J}\omega \right) }^{2} &\leq
&\left\Vert \left( \frac{x-m_{J}^{\ell }}{\left\vert J\right\vert }\right)
^{k}\right\Vert _{L^{2}\left( \mathbf{1}_{J}\omega \right)
}^{2}=\int_{J}\left( \frac{x-m_{J}^{\ell }}{\left\vert J\right\vert }\right)
^{2k}d\omega \left( x\right) \\
&\leq &\int_{J}\left( \frac{x-m_{J}^{\ell }}{\left\vert J\right\vert }%
\right) ^{2\ell }d\omega \left( x\right) =\left\Vert \left( \frac{%
x-m_{J}^{\ell }}{\left\vert J\right\vert }\right) ^{\ell }\right\Vert
_{L^{2}\left( \mathbf{1}_{J}\omega \right) }^{2},
\end{eqnarray*}%
for $1\leq \ell \leq k$ since $\left\vert \frac{x-m_{J}^{\ell }}{\left\vert
J\right\vert }\right\vert \leq 1$, and as a consequence we obtain%
\begin{equation*}
\mathcal{E}_{2,k}^{\alpha }\leq \mathcal{E}_{2,\ell }^{\alpha },\text{\ \ \
\ \ for }1\leq \ell \leq k.
\end{equation*}

\subsection{An example with $\mathcal{A}_{2}^{0}<\infty $, $\mathcal{E}%
_{2,k}^{0}<\infty $ and $\mathcal{E}_{2,1}^{0}=\infty $}

Define intervals%
\begin{equation*}
I_{j}\equiv \left[ \frac{1}{4^{j}}-\frac{1}{\sqrt{j}4^{j}},\frac{1}{4^{j}}%
\right] ,\ \ \ \ \ j=2,3,4,...
\end{equation*}%
and for $\varepsilon >0$ define measures%
\begin{eqnarray*}
d\sigma \left( y\right) &=&\delta _{0}+\sum_{j=2}^{\infty }\frac{4^{j}}{j^{%
\frac{1}{2}+\varepsilon }}\mathbf{1}_{I_{j}}\left( y\right) dy, \\
d\omega \left( x\right) &=&\sum_{j=2}^{\infty }\frac{j^{\frac{1}{2}}}{4^{j}}%
\mathbf{1}_{I_{j}}\left( x\right) dx,
\end{eqnarray*}%
so that%
\begin{eqnarray*}
\left\vert I_{j}\right\vert &=&\frac{1}{\sqrt{j}4^{j}}, \\
\left\vert I_{j}\right\vert _{\sigma } &=&\frac{4^{j}}{j^{\frac{1}{2}%
+\varepsilon }}\frac{1}{\sqrt{j}4^{j}}=\frac{1}{j^{1+\varepsilon }}, \\
\left\vert I_{j}\right\vert _{\omega } &=&\frac{j^{\frac{1}{2}}}{4^{j}}\frac{%
1}{\sqrt{j}4^{j}}=\frac{1}{4^{2j}}.
\end{eqnarray*}%
Then we compute that%
\begin{eqnarray*}
\frac{\left\vert I_{j}\right\vert _{\sigma }\left\vert I_{j}\right\vert
_{\omega }}{\left\vert I_{j}\right\vert ^{2}} &=&\frac{\frac{1}{%
j^{1+\varepsilon }}\frac{1}{4^{2j}}}{\left( \frac{1}{\sqrt{j}4^{j}}\right)
^{2}}=\frac{1}{j^{\varepsilon }}, \\
\frac{\left\vert \left[ 0,\frac{1}{4^{j}}\right] \right\vert _{\sigma
}\left\vert \left[ 0,\frac{1}{4^{j}}\right] \right\vert _{\omega }}{%
\left\vert \left[ 0,\frac{1}{4^{j}}\right] \right\vert ^{2}} &=&\frac{\left(
1+\sum_{i=j}^{\infty }\left\vert I_{i}\right\vert _{\sigma }\right) \left(
\sum_{i=j}^{\infty }\frac{1}{4^{2i}}\right) }{\left( \frac{1}{4^{j}}\right)
^{2}}\approx 1,
\end{eqnarray*}%
and in fact it can be verified that $\mathcal{A}_{2}\left( \sigma ,\omega
\right) \approx 1$. We also have%
\begin{equation*}
\left\Vert \frac{x-m_{I_{j}}^{1}}{\left\vert I_{j}\right\vert }\right\Vert
_{L^{2}\left( \mathbf{1}_{I_{j}}\omega \right) }^{2}=\int_{I_{j}}\left( 
\frac{x-m_{I_{j}}^{1}}{\left\vert I_{j}\right\vert }\right) ^{2}d\omega
\left( x\right) \approx \left\vert I_{j}\right\vert _{\omega }=\frac{1}{%
4^{2j}}
\end{equation*}%
and writing $\mathrm{P}=\mathrm{P}_{1}^{0}$ with $I=\left[ 0,1\right] $, we
have%
\begin{eqnarray*}
\mathrm{P}\left( I_{j},\mathbf{1}_{I}\sigma \right) &=&\int_{I}\frac{%
\left\vert I_{j}\right\vert }{\left[ \left\vert I_{j}\right\vert +\left\vert
y-c_{I_{j}}\right\vert \right] ^{2}}d\sigma \left( y\right) \approx \frac{%
\left\vert I_{j}\right\vert }{\left[ \left\vert I_{j}\right\vert +\left\vert
0-c_{I_{j}}\right\vert \right] ^{2}}+\int_{I_{j}}\frac{\left\vert
I_{j}\right\vert }{\left[ \left\vert I_{j}\right\vert +\left\vert
y-c_{I_{j}}\right\vert \right] ^{2}}d\sigma \left( y\right) \\
&\approx &\frac{\frac{1}{\sqrt{j}4^{j}}}{\left[ \frac{1}{4^{j}}\right] ^{2}}+%
\frac{\left\vert I_{j}\right\vert _{\sigma }}{\left\vert I_{j}\right\vert }=%
\frac{4^{j}}{\sqrt{j}}+\frac{\frac{1}{j^{1+\varepsilon }}}{\frac{1}{\sqrt{j}%
4^{j}}}=\frac{4^{j}}{j^{\frac{1}{2}}}+\frac{4^{j}}{j^{\frac{1}{2}%
+\varepsilon }}\approx \frac{4^{j}}{j^{\frac{1}{2}}},
\end{eqnarray*}%
where we note that the delta mass $\delta _{0}$ in $\sigma $ contributes the
dominant term. From this we compute%
\begin{equation*}
\mathcal{E}_{2,1}^{\alpha }\geq \frac{1}{\left\vert I\right\vert _{\sigma }}%
\sum_{j=2}^{\infty }\mathrm{P}\left( I_{j},\mathbf{1}_{I}\sigma \right)
^{2}\left\Vert \frac{x-m_{I_{j}}^{1}}{\left\vert I_{j}\right\vert }%
\right\Vert _{L^{2}\left( \mathbf{1}_{I_{j}}\omega \right) }^{2}\approx
\sum_{j=2}^{\infty }\left( \frac{4^{j}}{j^{\frac{1}{2}}}\right) ^{2}\frac{1}{%
4^{2j}}=\sum_{j=2}^{\infty }\frac{1}{j}=\infty .
\end{equation*}

On the other hand, writing $\mathrm{P}_{k}=\mathrm{P}_{k}^{0}$ we have%
\begin{eqnarray*}
\mathrm{P}_{k}\left( I_{j},\mathbf{1}_{I}\sigma \right) &=&\int_{I}\left( 
\frac{\left\vert I_{j}\right\vert }{\left( \left\vert I_{j}\right\vert
+\left\vert y-c_{I_{j}}\right\vert \right) }\right) ^{k-1}\frac{\left\vert
I_{j}\right\vert }{\left[ \left\vert I_{j}\right\vert +\left\vert
y-c_{I_{j}}\right\vert \right] ^{2}}d\sigma \left( y\right) \\
&\approx &\left( \frac{\left\vert I_{j}\right\vert }{\left( \left\vert
I_{j}\right\vert +\left\vert 0-c_{I_{j}}\right\vert \right) }\right) ^{k-1}%
\frac{\left\vert I_{j}\right\vert }{\left[ \left\vert I_{j}\right\vert
+\left\vert 0-c_{I_{j}}\right\vert \right] ^{2}}+\int_{I_{j}}\frac{%
\left\vert I_{j}\right\vert }{\left[ \left\vert I_{j}\right\vert +d\left(
y,I_{j}\right) \right] ^{2}}d\sigma \left( y\right) \\
&\approx &\left( \frac{\frac{1}{\sqrt{j}4^{j}}}{\frac{1}{4^{j}}}\right)
^{k-1}\frac{\frac{1}{\sqrt{j}4^{j}}}{\left[ \frac{1}{4^{j}}\right] ^{2}}+%
\frac{\left\vert I_{j}\right\vert _{\sigma }}{\left\vert I_{j}\right\vert }%
\approx \frac{4^{j}}{j^{\frac{k}{2}}}+\frac{\frac{1}{j^{1+\varepsilon }}}{%
\frac{1}{\sqrt{j}4^{j}}}\approx \frac{4^{j}}{j^{\frac{1}{2}+\varepsilon }},
\end{eqnarray*}%
where when $k\geq 2$, the dominant term arises from that part of $\sigma $
supported on $I_{j}$, and hence is significantly smaller than $\mathrm{P}%
\left( I_{j},\mathbf{1}_{I}\sigma \right) $. Then for $k\geq 2$ we compute%
\begin{equation*}
\frac{1}{\left\vert I\right\vert _{\sigma }}\sum_{j=2}^{\infty }\mathrm{P}%
_{k}\left( I_{j},\mathbf{1}_{I}\sigma \right) ^{2}\left\Vert \left( \frac{%
x-m_{I_{j}}^{k}}{\left\vert I_{j}\right\vert }\right) ^{k}\right\Vert
_{L^{2}\left( \mathbf{1}_{I_{j}}\omega \right) }^{2}\lesssim
\sum_{j=2}^{\infty }\left( \frac{4^{j}}{j^{\frac{1}{2}+\varepsilon }}\right)
^{2}\frac{1}{4^{2j}}=\sum_{j=2}^{\infty }\frac{1}{j^{1+2\varepsilon }}%
<\infty ,
\end{equation*}%
and it can be shown that in fact $\mathcal{E}_{2,k}^{\alpha }<\infty $ by
considering arbitrary decompositions $I=\dot{\cup}_{r=1}^{\infty }I_{r}$.

\subsection{A Calder\'{o}n-Zygmund operator satisfying testing conditions 
\label{CZ test}}

We do not have an example of a weight pair $\left( \sigma ,\omega \right) $,
and a familiar Calder\'{o}n-Zygmund operator $T$, to which Conjecture \ref%
{TP} applies, and to which the known $T1$-type theorems fail to apply. Our
purpose here is to instead construct a rather artificial example to
demonstrate that Conjecture \ref{TP} is at least `logically' different than
the known $T1$-type theorems in \cite{LaWi} and \cite{SaShUr7}.

For this we first construct a dyadic operator $T_{\left( \sigma ,\omega
\right) }^{\limfunc{dy}}$ that always satisfies testing conditions for a
given weight pair $\left( \sigma ,\omega \right) $. For an arbitrary weight
pair $\left( \sigma ,\omega \right) $ define%
\begin{equation*}
T_{\left( \sigma ,\omega \right) }^{\limfunc{dy}}f\equiv \sum_{I\in \mathcal{%
D}}\sum_{1\leq i,j\leq 2}\left\langle f,a_{I}^{\sigma ,i}\right\rangle
_{L^{2}\left( \sigma \right) }a_{I}^{\omega ,j}\ ,
\end{equation*}%
where $\left\{ a_{I}^{\sigma ,1},a_{I}^{\sigma ,2}\right\} _{I\in \mathcal{D}%
}$ and $\left\{ a_{I}^{\omega ,1},a_{I}^{\omega ,2}\right\} _{I\in \mathcal{D%
}}$ are Alpert bases for $L^{2}\left( \sigma \right) $ and $L^{2}\left(
\omega \right) $ respectively. Then for any interval $Q\in \mathcal{D}$ and
polynomial $p$ of degree at most $1$ we have%
\begin{eqnarray*}
\int_{Q}\left\vert T_{\left( \sigma ,\omega \right) }^{\limfunc{dy}}\mathbf{1%
}_{Q}p\right\vert ^{2}d\omega &=&\int_{Q}\left\vert \sum_{I\in \mathcal{D}%
}\sum_{1\leq i,j\leq 2}\left\langle \mathbf{1}_{Q}p,a_{I}^{\sigma
,i}\right\rangle _{L^{2}\left( \sigma \right) }a_{I}^{\omega ,j}\right\vert
^{2}d\omega \\
&=&\sum_{I\in \mathcal{D}}\sum_{1\leq j\leq 2}\int_{Q}\left\vert \sum_{1\leq
i\leq 2}\left\langle \mathbf{1}_{Q}p,a_{I}^{\sigma ,i}\right\rangle
_{L^{2}\left( \sigma \right) }\right\vert ^{2}\left\vert a_{I}^{\omega
,j}\right\vert ^{2}d\omega \\
&=&\sum_{I\in \mathcal{D}:\ I\varsupsetneqq Q}\sum_{1\leq j\leq
2}\int_{Q}\left\vert \widehat{\mathbf{1}_{Q}p}\left( I\right) \right\vert
^{2}\left\vert a_{I}^{\omega ,j}\right\vert ^{2}d\omega \\
&=&\sum_{I\in \mathcal{D}:\ I\varsupsetneqq Q}\left\vert \widehat{\mathbf{1}%
_{Q}p}\left( I\right) \right\vert ^{2}\leq \int_{Q}\left\vert p\right\vert
^{2}d\sigma .
\end{eqnarray*}%
Similarly we have the dual testing condition.

To construct a Calder\'{o}n-Zygmund operator $T$ that satisfies the testing
conditions for $\left( \sigma ,\omega \right) $ and whose kernel is $\left(
k+\delta \right) $-smooth as in (\ref{sizeandsmoothness'}) for any $k+\delta
>1$, we choose a subgrid $\mathcal{D}^{\prime }$ of $\mathcal{D}$ satisfying:%
\begin{equation}
\text{if }I,J\in \mathcal{D}^{\prime }\text{ satisfy }\left\vert
I\right\vert \leq \left\vert J\right\vert \text{ and }\frac{11}{9}I\cap 
\frac{11}{9}J\neq \emptyset \text{, then }\frac{10}{9}I\subset J\text{,}
\label{D'}
\end{equation}%
and define functions $a_{I}^{\sigma }$ and $a_{I}^{\omega }$ to satisfy the
following conditions with $\mu $ equal $\sigma $ and $\omega $: 
\begin{eqnarray*}
\func{Supp}a_{I}^{\mu } &\subset &\frac{11}{9}I, \\
0 &\leq &a_{I}^{\mu }\left( x\right) \leq c\left\vert I\right\vert ^{-\frac{1%
}{2}}\text{ for }x\in \mathbb{R}, \\
a_{I}^{\mu }\left( x\right) &=&c\text{ for }x\in \frac{10}{9}I, \\
\int \left\vert a_{I}^{\mu }\right\vert ^{2}d\mu &=&1, \\
\int_{\mathbb{R}}xa_{I}^{\mu }\left( x\right) d\mu \left( x\right) &=&\int_{%
\mathbb{R}}a_{I}^{\mu }\left( x\right) d\mu \left( x\right) =0, \\
\left\vert \frac{d^{\ell }}{dx^{\ell }}a_{I}^{\mu }\left( x\right)
\right\vert &\leq &C_{\ell }\left\vert I\right\vert ^{-\frac{1}{2}-\ell },\
\ \ \ \ \ell \geq 0.
\end{eqnarray*}%
Note that these functions are \emph{not} Alpert functions, but that they 
\emph{do} form an orthonormal collection in $L^{2}\left( \mu \right) $. Then
we set 
\begin{equation*}
Tf=\sum_{I\in \mathcal{D}^{\prime }}\left\langle f,a_{I}^{\sigma
}\right\rangle _{L^{2}\left( \sigma \right) }a_{I}^{\omega }
\end{equation*}%
and note that $T$ is a $\left( k+\delta \right) $-smooth Calder\'{o}%
n-Zygmund operator (see below) that satisfies the testing conditions for the
weight pair $\left( \sigma ,\omega \right) $ by the same argument above that
established the testing conditions for the dyadic operator $T_{\left( \sigma
,\omega \right) }^{\limfunc{dy}}$. If we take the weight pair $\left( \sigma
,\omega \right) $ constructed above, then Conjecture \ref{TP} gives the
boundedness of $T$ from $L^{2}(\sigma )$ to $L^{2}(\omega )$.

\begin{lemma}
The operator $T$ defined above is a $\left( k+\delta \right) $-smooth $0$%
-fractional standard Calder\'{o}n-Zymund operator for all $k+\delta >1$.
\end{lemma}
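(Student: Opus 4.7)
The kernel of $T$, realised so that $Tf(x)=\int K(x,y)\,f(y)\,d\sigma(y)$, is
\begin{equation*}
K(x,y)=\sum_{I\in\mathcal{D}'}a_I^\omega(x)\,a_I^\sigma(y).
\end{equation*}
The plan is to verify the size, derivative, and H\"older bounds of (\ref{sizeandsmoothness'}) termwise from the support, sup-norm, and smoothness bounds on $a_I^\mu$, and then sum over $I$ by exploiting the separation property (\ref{D'}).

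Fix $x\neq y$. A summand vanishes unless both $x,y\in\tfrac{11}{9}I$, which forces $|I|\geq\tfrac{9}{11}|x-y|$. First I will verify that $\mathcal{D}'(x,y):=\{I\in\mathcal{D}':x,y\in\tfrac{11}{9}I\}$ is a nested chain with geometrically growing scales: if $I,J\in\mathcal{D}'(x,y)$ with $|I|\leq|J|$, then $\tfrac{11}{9}I\cap\tfrac{11}{9}J\ni x$, so (\ref{D'}) yields $\tfrac{10}{9}I\subset J$ and in particular $|J|\geq\tfrac{10}{9}|I|$. Using $|a_I^\mu|\leq c|I|^{-1/2}$ and $|\partial^j a_I^\mu|\leq C_j|I|^{-1/2-j}$, a geometric summation over $\mathcal{D}'(x,y)$ will immediately give
\begin{equation*}
|K(x,y)|\lesssim|x-y|^{-1},\qquad |\partial_x^jK(x,y)|+|\partial_y^jK(x,y)|\lesssim|x-y|^{-1-j},\quad 1\leq j\leq k,
\end{equation*}
the $y$-derivatives being treated symmetrically by differentiating $a_I^\sigma$ rather than $a_I^\omega$.

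For the H\"older bound with $|x-x'|\leq\tfrac12|x-y|$, I will interpolate between the two pointwise estimates
\begin{equation*}
|\partial_x^ka_I^\omega(x)-\partial_x^ka_I^\omega(x')|\leq\min\bigl(2C_k|I|^{-1/2-k},\,C_{k+1}|x-x'|\,|I|^{-3/2-k}\bigr)\lesssim|x-x'|^\delta|I|^{-1/2-k-\delta},
\end{equation*}
the second (mean value) bound using only that $a_I^\omega\in C^\infty(\mathbb{R})$ with the stated derivative bounds. Summands are nonzero only when $y\in\tfrac{11}{9}I$ and $x$ or $x'$ lies in $\tfrac{11}{9}I$, and the hypothesis $|x-x'|\leq\tfrac12|x-y|$ again forces $|I|\gtrsim|x-y|$ and places these $I$ in a single nested chain with geometric scales (via (\ref{D'}) applied with $y$ as the common point). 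Multiplying by $|a_I^\sigma(y)|\leq c|I|^{-1/2}$ and summing will yield
\begin{equation*}
|\partial_x^kK(x,y)-\partial_x^kK(x',y)|\lesssim\left(\frac{|x-x'|}{|x-y|}\right)^\delta|x-y|^{-1-k},
\end{equation*}
and the adjoint estimate will follow by the symmetry of $a_I^\omega$ and $a_I^\sigma$ in the kernel.

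The main (minor) obstacle is to track the geometric-chain structure when one of $x,x'$ falls outside $\tfrac{11}{9}I$; this is handled by noting that the mean value estimate above is valid on all of $\mathbb{R}$ (since $a_I^\omega$ vanishes smoothly outside $\tfrac{11}{9}I$) and that the constraint $|x-x'|\leq\tfrac{11}{18}|I|$ keeps $x'$ within distance $\lesssim|I|$ of $I$, so the nested chain determined by $y$ already suffices to control the sum.
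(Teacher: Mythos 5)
Your proposal is correct and follows essentially the same route as the paper: bound each summand of $K(x,y)=\sum_{I\in\mathcal{D}'}a_I^{\omega}(x)a_I^{\sigma}(y)$ termwise, use the separation property (\ref{D'}) to see that the contributing intervals form a chain with at most one (the paper generously says two) interval per scale, all of length $\gtrsim|x-y|$, and sum geometrically. The only cosmetic difference is in the H\"older estimate, where you interpolate $\min\bigl(2C_k|I|^{-1/2-k},\,C_{k+1}|x-x'|\,|I|^{-3/2-k}\bigr)\lesssim|x-x'|^{\delta}|I|^{-1/2-k-\delta}$ explicitly, while the paper simply invokes the mean value theorem together with the gradient bounds of all orders; both are valid and your treatment of the case where $x'$ leaves $\tfrac{11}{9}I$ is sound.
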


\begin{proof}
Note that the kernel is $K(x,y)=\sum_{I\in \mathcal{D}^{\prime
}}a_{I}^{\omega }(x)a_{I}^{\sigma }(y)$. Assume that $\left\vert
x-y\right\vert \simeq \frac{11}{9}2^{-\ell }$ (i.e. $\frac{11}{9}2^{-\ell
}\lesssim \left\vert x-y\right\vert \lesssim \frac{11}{9}2^{-\ell +1}$). The
sum in the expression of $K(x,y)$ only contains those terms which correspond
to the intervals in $\mathcal{D}^{\prime }$ for which $x,y\in \frac{11}{9}I$%
. The smallest such interval has length $2^{-\ell }$. Also, for $%
m=0,1,\ldots ,\ell -3$ there are at most two intervals of length $2^{-\ell
+m}$ for which $a_{I}^{\omega }(x)a_{I}^{\sigma }(y)\neq 0$. Thus there are
at most $2(\ell -2)$ terms in the sum for $K(x,y)$.

Also, using the properties of $a_{I}^{\omega }$ and $a_{I}^{\sigma }$ there
holds: 
\begin{eqnarray*}
\left\vert a_{I}^{\omega }(x)a_{I}^{\sigma }(y)\right\vert &\lesssim
&\left\vert I\right\vert ^{-\frac{1}{2}}\left\vert I\right\vert ^{-\frac{1}{2%
}}=\left\vert I\right\vert ^{-1}, \\
\left\vert \frac{\partial }{\partial x}a_{I}^{\omega }(x)a_{I}^{\sigma
}(y)\right\vert &\lesssim &\left\vert I\right\vert ^{-\frac{1}{2}%
-1}\left\vert I\right\vert ^{-\frac{1}{2}}=\left\vert I\right\vert ^{-2}, \\
\left\vert \frac{\partial }{\partial y}a_{I}^{\omega }(x)a_{I}^{\sigma
}(y)\right\vert &\lesssim &\left\vert I\right\vert ^{-\frac{1}{2}}\left\vert
I\right\vert ^{-\frac{1}{2}-1}=\left\vert I\right\vert ^{-2}, \\
\left\vert \nabla ^{k}a_{I}^{\omega }(x)a_{I}^{\sigma }(y)\right\vert &\leq
&C_{k}\left\vert I\right\vert ^{-1-k},
\end{eqnarray*}%
for all $k\geq 1$, and where $\nabla =\left( \frac{\partial }{\partial x},%
\frac{\partial }{\partial y}\right) $.

We now use these estimates to show that the kernel $K(x,y)$ is a $\left(
k+\delta \right) $-smooth $0$-fractional CZ kernel for all $k+\delta >1$. In
fact it is well known (using the mean value theorem) that it suffices to show%
\begin{equation*}
\left\vert \nabla K(x,y)\right\vert \leq C_{k}\frac{1}{\left\vert
x-y\right\vert ^{k+1}},\ \ \ \ \ \text{for all }k\geq 0.
\end{equation*}%
Recalling that $\left\vert x-y\right\vert \simeq \frac{11}{9}2^{-\ell }$ we
have for $k\geq 0$, 
\begin{eqnarray*}
\left\vert \nabla ^{k}K(x,y)\right\vert &\leq &C_{k}\sum_{I\in \mathcal{D}%
^{\prime }}\sum_{i+j=k}\left\vert \left( \nabla _{x}^{i}a_{I}^{\omega
}(x)\right) \left( \nabla _{y}^{j}a_{I}^{\sigma }(y)\right) \right\vert \leq
C_{k}\sum_{I\in \mathcal{D}^{\prime }}\left\vert I\right\vert ^{-k-1} \\
&\leq &C_{k}\sum_{m=0}^{\ell -3}2^{\left( k+1\right) (\ell -m)}\approx
C_{k}2^{\ell \left( k+1\right) }\approx C_{k}\frac{1}{\left\vert
x-y\right\vert ^{k+1}}.
\end{eqnarray*}
\end{proof}

\begin{remark}
However, we can write this measure $\sigma $ as $\delta _{0}+\widetilde{%
\sigma }$ and note that the weight pair $\left( \widetilde{\sigma },\omega
\right) $ satisfies the usual energy condition (the $1$-energy condition),
and so $T$ is bounded from $L^{2}(\widetilde{\sigma })$ to $L^{2}(\omega )$
by results in either \cite{LaWi} or \cite{SaShUr7}, and hence 
\begin{equation*}
\int \left\vert Tf\sigma \right\vert ^{2}d\omega \lesssim \int \left\vert Tf%
\widetilde{\sigma }\right\vert ^{2}d\omega +\int \left\vert Tf\delta
_{0}\right\vert ^{2}d\omega \lesssim \int \left\vert f\right\vert ^{2}d%
\widetilde{\sigma }+\left\vert f\left( 0\right) \right\vert ^{2}\int
\left\vert \sum_{I\in \mathcal{D}^{\prime }}a_{I}^{\sigma }\left( 0\right)
a_{I}^{\omega }\right\vert ^{2}d\omega .
\end{equation*}%
Since $\left\vert f\left( 0\right) \right\vert ^{2}=\int \left\vert
f\right\vert ^{2}d\delta _{0}\leq \int \left\vert f\right\vert ^{2}d\sigma $%
, it now remains only to prove that%
\begin{equation*}
\int \left\vert \sum_{I\in \mathcal{D}^{\prime }}a_{I}^{\sigma }\left(
0\right) a_{I}^{\omega }\right\vert ^{2}d\omega =\sum_{I\in \mathcal{D}%
^{\prime }}\left\vert a_{I}^{\sigma }\left( 0\right) \right\vert
^{2}\lesssim 1,
\end{equation*}%
which in turn holds simply because there is at most one $I\in \mathcal{D}%
^{\prime }$ containing the origin, and for such an interval we have that $%
\left\vert a_{I}^{\sigma }\left( 0\right) \right\vert \lesssim 1$ due to the
presence of the unit point mass at the origin, a much better bound than the
general bound $c\left\vert I\right\vert ^{-\frac{1}{2}}$. Indeed, the only
intervals $I\in \mathcal{D}$ that contain the origin are the intervals $%
K_{j}=\left[ 0,4^{-j}\right) $ for some integer $j$, and by (\ref{D'}) there
is at most one of these in $\mathcal{D}^{\prime }$ containing the origin.
Thus we see that the Calder\'{o}n-Zygmund operator $T$ constructed above is
very artificial. Nonetheless, it does show that Conjecture \ref{TP} contains
boundedness results not included in \cite{LaWi} and \cite{SaShUr7}, and
potentially some not so trivial as that above as well.
\end{remark}

\end{document}